\def\eqref#1{equation~(\ref{#1})}
\def\ceil#1{\lceil #1 \rceil}
\def\floor#1{\left\lfloor #1 \right\rfloor}
\def\1{\bf{1}}
\newcommand{\Norm}[1]{\left\| #1 \right\|}
\def\inner#1#2{\langle #1, #2 \rangle}
\def\vzero{{\bf{0}}}
\def\va{{\bf{a}}}
\def\vb{{\bf{b}}}
\def\ve{{\bf{e}}}
\def\vf{{\bf{f}}}
\def\vg{{\bf{g}}}
\def\vu{{\bf{u}}}
\def\vv{{\bf{v}}}
\def\vx{{\bf{x}}}
\def\vy{{\bf{y}}}
\def\vz{{\bf{z}}}
\def\fH{{\mathcal{H}}}
\def\fO{{\mathcal{O}}}
\def\fS{{\mathcal{S}}}
\def\BN{{\mathbb{N}}}
\def\BR{{\mathbb{R}}}
\def\mD {{\bf D}}
\def\mG {{\bf G}}
\def\mH {{\bf H}}
\def\mI {{\bf I}}
\def\mJ {{\bf J}}
\def\mP {{\bf P}}
\def\mQ {{\bf Q}}
\def\mU {{\bf U}}
\def\mV {{\bf V}}
\theoremstyle{plain}
\newtheorem{thm}{Theorem}
\newtheorem{dfn}{Definition}
\newtheorem{lem}{Lemma}
\newtheorem{asm}{Assumption}
\newtheorem{cor}{Corollary}
\newtheorem{prop}{Proposition}
\def\Ddots{\mathinner{\mkern1mu\raise\p@
\vbox{\kern7\p@\hbox{.}}\mkern2mu
\raise4\p@\hbox{.}\mkern2mu\raise7\p@\hbox{.}\mkern1mu}}
\newcommand*{\rom}[1]{\expandafter\@slowromancap\romannumeral #1@}
\definecolor{mydarkgreen}{RGB}{39,130,67}
\definecolor{mydarkred}{RGB}{192,25,25}
\definecolor{bgcolor}{rgb}{0.93,0.99,1}
\definecolor{bgcolor2}{rgb}{0.8,1,0.8}
\definecolor{bgcolor3}{rgb}{0.50,0.90,0.50}
\def\IGN{\textnormal{\texttt{IGN}}}
\def\MBIGN{\textnormal{\texttt{MB-IGN}}}
\begin{document}

\title{Incremental Gauss--Newton Methods with\\ Superlinear Convergence Rates}

\author{
    Zhiling Zhou\thanks{School of Data Science, Fudan University; zlzhou20@fudan.edu.cn} \qquad \quad
    Zhuanghua Liu\thanks{Department of Computer Science, National University of Singapore; liuzhuanghua9@gmail.com} \qquad \quad Chengchang Liu\thanks{Department of Computer Science and Engineering, The Chinese University of Hong Kong; ccliu22@cse.cuhk.edu.hk} \qquad \quad Luo Luo\thanks{School of Data Science, Fudan University; luoluo@fudan.edu.cn}
 }
 \date{}
 

\maketitle

\begin{abstract}
This paper addresses the challenge of solving large-scale nonlinear equations with H\"older continuous Jacobians.
We introduce a novel Incremental Gauss--Newton (IGN) method within explicit superlinear convergence rate, which outperforms existing 
methods that only achieve linear convergence rate.
In particular, we formulate our problem by the nonlinear least squares with finite-sum structure, and our method incrementally iterates with the information of one component in each round.
We also provide a mini-batch extension to our IGN method that obtains an even faster superlinear convergence rate.
Furthermore, we conduct numerical experiments to show the advantages of the proposed methods.
\end{abstract}

\section{Introduction}
\vskip 0.2cm
We study the problem of solving the system of nonlinear equations
\begin{align}
\label{eq:ori_prob}
    \vf(\vx) = \vzero,
\end{align}
where the nonlinear vector function $\vf:\BR^d\to\BR^n$ is Lipschitz continuous and its Jacobian is H\"older continuous.
This formulation is a fundamental problem in scientific computing \cite{nesterov2006cubic}, and it is popular in a large number of applications including
 machine learning \cite{defossez2015averaged, botev2017practical, bai2019deep}, control system \cite{berthier2021fast}, data assimilation \cite{tremolet2007model} and game theory~\cite{frehse1984nonlinear, nourian2013ϵ}.

The Newton-type methods \cite{dennis1996numerical, kelley1995iterative, kelley2003solving,ben1966newton,nocedal1999numerical,wang2012gauss} are widely used for solving nonlinear equations.
The classical Newton's method uses the curvature information in Jacobians to obtain a local quadratic convergence rate \cite{nocedal1999numerical}, while it suffers from the expensive computational cost to access the Jacobian and its (pseudo) inverse. 
Several lines of research focus on approximating Newton's methods with inexact Jacobians.
For example, the quasi-Newton methods~\cite{broyden1965class,li1999globally, al2014broyden,li1999globally} estimate the Jacobians via secant equations, leading to the iteration scheme that only needs to access the function value and Jacobian-vector calls.  
The explicit local superlinear convergence rates of these methods have been established in recent years~\cite{lin2021explicit, liu2022quasi, liu2023block, ye2021greedy}. 
Another line of work \cite{yuan2022sketched,ye2021approximate,pilanci2017newton} introduce matrix sketching technique \cite{woodruff2014sketching} to
reduce the dimension of the Jacobian matrix, which improves the computational efficiency per iteration.
The superiority of their local convergence depends on the structure of Jacobian in the specific problem. 
Although quasi-Newton and sketched Newton methods can benefit from the inexact Jacobians, they still require accessing the full information of the nonlinear vector function value at every iteration.

For large-scale nonlinear equations, we are interested in methods that do not require the computation of full function values and Jacobians.
In particular, \citet{bertsekas1996incremental}  proposed a variant of Gauss--Newton (GN) method by following the  Extended Kalman Filter (EKF) framework \cite{athans1968suboptimal, ljung1979asymptotic, bell1994iterated}, which incrementally accesses partial information of the vector function values and corresponding Jacobian during the iterations.
Consequently, \citet{moriyama2003incremental} incorporated a stepsize into the EKF method, which guarantees the global linear convergence rate under the gradient-growth condition~\cite{gurbuzbalaban2015globally}. 
In the past decade, the incremental (quasi) Newton methods with local superlinear convergence rates are established for strongly convex optimization \cite{rodomanovnewton, rodomanov2016superlinearly,mokhtari2018iqn,lahoti2023sharpened,liu2024incremental}.\footnote{In the view of solving nonlinear equations, the methods designed for convex optimization \cite{rodomanovnewton, rodomanov2016superlinearly,mokhtari2018iqn,lahoti2023sharpened,liu2024incremental} require an additional assumption that the Jacobian is symmetric positive-definite.} 
However, the superiority of local convergence for incremental Newton-type methods in solving the general nonlinear equations is still unclear.

In this work, we propose an incremental Gauss--Newton (\IGN) method for solving the systems of nonlinear equations.
Our method only requires access to one component of the nonlinear vector function and its gradient per iteration.
We maintain an aggregated vector and an aggregated matrix to estimate the vector function value and its Jacobian by incrementally updating.
We also introduce a Gram matrix with a low-rank update to reduce the computational cost of matrix inverse in vanilla Gauss--Newton methods.
The theoretical analysis shows our \IGN~method enjoys explicit local superlinear convergence rate for nonlinear equations problem with H\"older continuous Jacobians.
Furthermore, we provide a variant of our \IGN~that makes use of the information of a mini-batch of components, which achieves an even faster superlinear convergence rate. 
The numerical experiments on real-world applications validate the advantages of the proposed methods.

\paragraph{Paper Organization}
In Section~\ref{sec:preliminaries}, we formalize the notations and assumptions for our problem.
In Section~\ref{sec:ign}, we propose our incremental Gauss--Newton (\texttt{IGN}) method and present its convergence analysis.
In Section~\ref{sec:ign_k}, we extend the \texttt{IGN} method with the mini-batch update to obtain an even faster convergence rate.
In Section~\ref{sec:related_work}, we provide a discussion to compare the proposed method with related works.
In Section~\ref{sec:experiments}, we conduct numerical experiments to show the advantages of our methods.
We conclude our work in Section~\ref{sec:conclusion}.

\section{Preliminaries}\label{sec:preliminaries}
In this section, we formalize the notations and assumptions throughout this paper.

\subsection{Notations}
We let $[n]=\{1,\dots,n\}$ and use the notation $t\%n$ to present the remainder of $t$ divided by $n$.
We denote $\ve_i\in\BR^n$ as the $i$-th standard basis vector of the $n$-dimensional Euclidean space for all $i\in [n]$. 
We use $\Norm{\cdot}$ to represent the Euclidean norm for a given vector and the spectral norm for a given matrix. 
Moreover, we use the notation $\sigma_{\min}(\cdot)$ to represent the smallest singular value for a given matrix.

For the system of nonlinear equations (\ref{eq:ori_prob}), we partition the vector function $\vf:\BR^d\to\BR^n$ at $\vx\in\BR^d$ as~$\vf(\vx)=[f_1(\vx),\dots, f_n(\vx)]^\top\in\BR^n$, where $f_i:\BR^d\to\BR$.
We also denote the gradient of component $f_i(\cdot)$ at $\vx\in\BR^d$ as $\vg_i(\vx)=\nabla f_i(\vx)$, and we organize the corresponding Jacobian as~$\mJ(\vx)=[\vg_1(\vx), \cdots, \vg_n(\vx)]^\top\in\BR^{n\times d}$.

\subsection{Assumptions}

Throughout this paper, we suppose the function $\vf:\BR^d\to\BR^n$ satisfies the following assumptions.
\begin{asm}
\label{asm:L-f}
    We suppose the vector function $\vf:\BR^d\to\BR^n$ is Lipschitz continuous, i.e., there exists constant $L_f>0$ such that
    \begin{align}
        \Norm{\vf(\vx)-\vf(\vy)} \leq L_f\Norm{\vx - \vy}
    \end{align}
    for all $\vx,\vy\in\BR^d$.
\end{asm}
\begin{asm}\label{asm:holder-g}
    We suppose the Jacobian $\mJ:\BR^d\to\BR^{n\times d}$  is $\nu$-H\"older continuous for some $\nu\in(0,1]$, i.e., there exists constant $\fH_\nu>0$ such that
    \begin{align}
        \Norm{\mJ(\vx)-\mJ(\vy)} \leq \fH_\nu\Norm{\vx - \vy}^\nu
    \end{align}
    for all $\vx,\vy\in\BR^d$.
\end{asm}
\begin{asm}
\label{asm:b-J-sing}
    The system of the nonlinear equations (\ref{eq:ori_prob}) holds $n\geq d$ and has a non-degenerate solution $\vx^*\in\BR^d$, i.e., there exists some $\mu>0$ such that
    \begin{align}
    \label{eq:b-J-sing}
    \mu = \sigma_{\min} (\mJ(\vx^*))> 0.
    \end{align}
\end{asm}
Noticing that most of existing work \cite{rodomanov2016superlinearly,yuan2022sketched,liu2023block,bertsekas1997new,bertsekas1996incremental,moriyama2003incremental, gurbuzbalaban2015globally} focus on the assumption of Lipschitz continuous Jacobian, which is a special case of our Assumption \ref{asm:holder-g} by taking $\nu=1$.

\section{The Incremental Gauss--Newton Method} \label{sec:ign}

\begin{algorithm}[t]
\caption{Incremental Gauss–Newton Method (\texttt{IGN})}
\label{alg:IGN-1}
\begin{algorithmic}[1]
\State \textbf{Input:} $\vx^0\in\BR^d$, $\vu^0\in\BR^d$, $\mH^0,\mG^0\in\BR^{d\times d}$  \vskip0.1cm
\State \textbf{for} $t=0,1,\dots$ \vskip0.12cm
\State \quad  $\vx^{t+1}=\mG^t\vu^t$ \vskip0.12cm
\State \quad  $i_t =t\%n + 1$ \vskip0.12cm
\State \quad  $\mU^t = \big[
    - \vg_{i_t}(\vz_{i_t}^t),~~ 
    \vg_{i_t}(\vx^{t+1})
\big]$ \vskip0.12cm
\State \quad
$\mV^t = \big[\vg_{i_t}(\vz_{i_t}^t),~~ 
    \vg_{i_t}(\vx^{t+1})\big]$ \vskip0.12cm
\State \quad $\vu^{t+1} = \vu^t - \left(\vg_{i_t}(\vz_{i_t}^t)^\top\vz_{i_t}^{t} - f_{i_t}(\vz_{i_t}^t)\right)\vg_{i_t}(\vz_{i_t}^t) 
 + \left(\vg_{i_t}(\vx^{t+1})^\top\vx^{t+1} - f_{i_t}(\vx^{t+1})\right)\vg_{i_t}(\vx^{t+1})$ \vskip0.12cm
\State \quad $\mH^{t+1} =\mH^t - \vg_{i_t}(\vz_{i_t}^t)\vg_{i_t}(\vz_{i_t}^t)^\top+ \vg_{i_t}(\vx^{t+1})\vg_{i_t}(\vx^{t+1})^\top$ \vskip0.12cm
\State\label{line:update-G} \quad  $\mG^{t+1} =\mG^{t} - \mG^{t}\mU^t(\mI + (\mV^t)^\top\mG^{t}\mU^t)^{-1}(\mV^t)^\top\mG^{t}$ \vskip0.12cm
\State \quad  $\vz_i^{t+1} = \begin{cases}
        \vx^{t+1}, & \text{if~} i=i_t \\
        \vz_i^t, & \text{otherwise}
    \end{cases}$ \vskip0.12cm
\State \textbf{end for}
\end{algorithmic}
\end{algorithm}

In this section, we propose the Incremental Gauss-Newton (\IGN) method and provide its explicit superlinear convergence rate.

\subsection{The Algorithm}

We first introduce the intuition of our algorithm design.
Solving the system of nonlinear equations~(\ref{eq:ori_prob}) can be regarded as minimizing the norm of the nonlinear vector function $\vf:\BR^d\to\BR^n$, which means we can reformulate the problem as the following nonlinear least squares minimization problem
\begin{align}\label{eq:min_prob}
    \min_{\vx\in\BR^d} \phi(\vx)\triangleq \frac{1}{2}\sum_{i=1}^n (f_i(\vx))^2.
\end{align}
For each component $f_i:\BR^d\to\BR$, we consider its linear approximation
\begin{align}\label{eq:approx-fi}
    f_i(\vx) \approx f_i(\vz_i^t) + \vg_i(\vz_i^t)^\top(\vx - \vz_i^t),
\end{align}   
where $\vz_i^t\in\BR^d$ is some point related to component $f_i$ at the $t$-th iteration.
The estimation (\ref{eq:approx-fi}) motivates us to construct the surrogate problem for the nonlinear least squares (\ref{eq:min_prob}) as follows
\begin{align}\label{eq:prob-ign}
    \min_{\vx\in\BR^d} \psi(\vx)\triangleq \sum_{i=1}^n \psi_i(\vx),
    \qquad\text{where}~\psi_i(\vx)\triangleq\frac{1}{2}\Norm{f_i(\vz_i^t) + \vg_i(\vz_i^t)^\top (\vx - \vz_i^t)}^2.
\end{align}
Since each $\psi_i$ is convex, which implies problem (\ref{eq:prob-ign}) has the closed-form solution
\begin{align}\label{eq:ign-update-0}
\vx^{t+1} 
= \left(\sum_{i=1}^n \vg_i(\vz_i^t)\vg_i(\vz_i^t)^\top\right)^{-1}\sum_{i=1}^n \left(\vg_i(\vz_i^t)^\top\vz_i^t -  f_i(\vz_i^t)\right)\vg_i(\vz_i^t).
\end{align}
We assume the matrix $\sum_{i=1}^n \vg_i(\vz_i^t)\vg_i(\vz_i^t)^\top$ is always non-singular in this subsection, which will be verified under our assumptions in later analysis.

We propose the Incremental Gauss-Newton (\IGN) method by performing an update (\ref{eq:ign-update-0}) at the $t$-th iteration. 
It is worth noting that we can take advantage of the inherent finite-sum structure in formulation (\ref{eq:min_prob}) to establish incremental methods.
Specifically, we update one of $\{\vz^t_i\}_{i=1}^n$ at each iteration in a cyclic fashion, that is
\begin{align}
\label{eq:update-1}
    \vz_i^{t+1} = \begin{cases}
        \vx^{t+1}, & \text{if~} i=i_t, \\
        \vz_i^t, & \text{otherwise},
    \end{cases}   
\end{align}
where $i_t ={t\%n} + 1$. This indicates that we only need to address the terms associated with point~$\vz_{i_t}^t$ in update~(\ref{eq:ign-update-0}), which can be implemented by introducing the aggregated variables
\begin{align}\label{eq:note-1}
\vu^t = \sum_{i=1}^n \left(\vg_i(\vz_i^t)^\top\vz_i^t -  f_i(\vz_i^t)\right)\vg_i(\vz_i^t), \qquad  
\mH^t=\sum_{i=1}^n \vg_i(\vz_i^t)\vg_i(\vz_i^t)^\top
\qquad\text{and}\qquad \mG^t = \left(\mH^t\right)^{-1}.
\end{align}
Then we can write update (\ref{eq:ign-update-0}) as
\begin{align}\label{eq:prob-ign2}
    \vx^{t+1} = \mG^t\vu_t
\end{align}
and maintain the aggregated variables by following recursions\footnote{Noticing that there is no need to explicitly construct matrix $\mH_t$ in implementation, while this matrix is useful to understand and analyze our method.}
\begin{align}\label{eq:recursion-1}
\small
\begin{cases}    
\,\vu^{t+1} = \vu^t - \left(\vg_{i_t}(\vz_{i_t}^t)^\top\vz_{i_t}^{t} - f_{i_t}(\vz_{i_t}^t)\right)\vg_{i_t}(\vz_{i_t}^t) 
 + \left(\vg_{i_t}(\vx^{t+1})^\top\vx^{t+1} - f_{i_t}(\vx^{t+1})\right)\vg_{i_t}(\vx^{t+1}),  \\[0.15cm]
\mH^{t+1} =\mH^t - \vg_{i_t}(\vz_{i_t}^t)\vg_{i_t}(\vz_{i_t}^t)^\top+ \vg_{i_t}(\vx^{t+1})\vg_{i_t}(\vx^{t+1})^\top,  \\[0.15cm]
\mG^{t+1} =\mG^{t} - \mG^{t}\mU^t(\mI + (\mV^t)^\top\mG^{t}\mU^t)^{-1}(\mV^t)^\top\mG^{t}, 
\end{cases}
\end{align}
where the last one is based on Sherman--Morrison--Woodbury formula \cite{woodbury1950inverting} and definitions
\begin{align}
\label{eq:UV-1}
\mU^t \triangleq \big[
    - \vg_{i_t}(\vz_{i_t}^t),~~ 
    \vg_{i_t}(\vx^{t+1})
\big]\in\BR^{d\times 2} 
\qquad\text{and}\qquad
\mV^t \triangleq \big[\vg_{i_t}(\vz_{i_t}^t),~~ 
    \vg_{i_t}(\vx^{t+1})\big]\in\BR^{d\times 2}.
\end{align}
Since each of matrices $\mU^t$ and $\mV^t$ only contains two columns, updating the variables $\vx^{t+1}$, $\vu^{t+1}$ and~$\mG^{t+1}$ can be implemented within the complexity of $\fO(d^2)$ flops. 
Additionally, the memory cost for maintaining variables $\{\vz_i^{t}\}_{i=1}^n$, $\{\vg_{i_t}(\vz^{t})\}_{i=1}^n$, $\vu^t$ and $\mG^t$ is $\fO(nd+d^2)$.
As a comparison, the vanilla Gauss--Newton (\texttt{GN}) method \cite{ben1966newton,nocedal1999numerical}  performs the iteration
\begin{align}\label{eq:classical-gn0}
\begin{split}    
    \vx^{t+1} =& \vx^t - \left(\mJ(\vx^t)^\top\mJ(\vx^t)\right)^{-1}\mJ(\vx^t)^\top\vf(\vx^t),
\end{split}
\end{align}
which takes a computation cost of $\fO(nd+d^3)$ flops and a memory cost of $\fO(nd+d^2)$.

We summarize the procedure of our \IGN~in Algorithm \ref{alg:IGN-1}.
Observe that the vanilla \texttt{GN} iteration (\ref{eq:classical-gn0}) can be reformulated by
\begin{align}\label{eq:classical-gn}
\vx^{t+1} 
= \left(\mJ(\vx^t)^\top\mJ(\vx^t)\right)^{-1}\mJ(\vx^t)^\top(\mJ(\vx^t)\vx^t-\vf(\vx^t)).
\end{align}
Comparing our updates (\ref{eq:prob-ign})--(\ref{eq:prob-ign2}) with (\ref{eq:classical-gn}), the aggregated variables~$\vu^t$, $\mH^t$ and $\mG^t$ can be regarded as the estimators of terms $\mJ(\vx^t)^\top(\mJ(\vx^t)\vx^t-\vf(\vx^t))$, $\mJ(\vx^t)^\top \mJ(\vx^t)$ and $(\mJ(\vx^t)^\top \mJ(\vx^t))^{-1}$ in scheme of~(\ref{eq:classical-gn}) respectively.
The efficiency of our \IGN~method comes from the strategy that we apply the different $\vz_i^t$ in the linear approximation (\ref{eq:approx-fi}) for the different component $f_i$.
In contrast, the vanilla \texttt{GN} method is based on the linear approximation at the identical point $\vx_t$ for all components.

\subsection{The Convergence Analysis}\label{sec:conv-an}

In this subsection, we establish the local superlinear convergence of the proposed \IGN~method. 

We start our analysis from the following proposition, which shows the non-singularity of the Gram matrix associated with the exact Jacobian at the non-degenerate solution $\vx^*\in\BR^d$.

\begin{prop}\label{prop:b-JJ-sing}
Under Assumption \ref{asm:b-J-sing}, it holds that
\begin{align}\label{eq::b-JJ-sing}
    \sigma_{\min} (\mJ(\vx^*)^\top \mJ(\vx^*)) = \mu^2 > 0.
\end{align}
\end{prop}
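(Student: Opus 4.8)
The plan is to exploit the standard linear-algebraic identity relating the singular values of a matrix $\mA \in \BR^{n\times d}$ with $n \geq d$ to the eigenvalues of the Gram matrix $\mA^\top \mA$. Concretely, I would set $\mA = \mJ(\vx^*) \in \BR^{n\times d}$ and recall that if $\mA = \mW \mathbf{\Sigma} \mathbf{V}^\top$ is a (thin) singular value decomposition, then $\mA^\top \mA = \mathbf{V}\mathbf{\Sigma}^\top\mathbf{\Sigma}\mathbf{V}^\top$, so the eigenvalues of the $d\times d$ symmetric positive semidefinite matrix $\mA^\top \mA$ are exactly the squares of the singular values of $\mA$. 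In particular, the smallest eigenvalue of $\mA^\top\mA$ equals $\sigma_{\min}(\mA)^2$, and since $\mA^\top\mA$ is symmetric positive semidefinite, its smallest eigenvalue coincides with its smallest singular value $\sigma_{\min}(\mA^\top\mA)$.

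The key steps, in order, would be: (i) invoke Assumption~\ref{asm:b-J-sing}, which gives $n \geq d$ and $\sigma_{\min}(\mJ(\vx^*)) = \mu > 0$; (ii) apply the SVD-based identity above to conclude $\sigma_{\min}(\mJ(\vx^*)^\top \mJ(\vx^*)) = \sigma_{\min}(\mJ(\vx^*))^2 = \mu^2$; (iii) observe that $\mu > 0$ forces $\mu^2 > 0$, which also shows $\mJ(\vx^*)^\top\mJ(\vx^*)$ is non-singular (indeed positive definite). Alternatively, one can argue via the Rayleigh quotient: for any unit vector $\vv \in \BR^d$, $\vv^\top \mJ(\vx^*)^\top\mJ(\vx^*)\vv = \Norm{\mJ(\vx^*)\vv}^2 \geq \sigma_{\min}(\mJ(\vx^*))^2 = \mu^2$, with equality attained at the corresponding right singular vector, so the minimum eigenvalue is exactly $\mu^2$.

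Honestly, there is no real obstacle here — the statement is essentially a restatement of the definition of smallest singular value together with the elementary fact $\sigma_i(\mA^\top\mA) = \sigma_i(\mA)^2$. The only point requiring the hypothesis $n \geq d$ is that without it $\mJ(\vx^*)$ could have a nontrivial null space and $\sigma_{\min}(\mJ(\vx^*))$ would be $0$; with $n \geq d$ and full column rank (which is what $\mu > 0$ encodes), the Gram matrix is genuinely invertible. I would keep the write-up to two or three lines, citing the relationship between singular values of $\mJ(\vx^*)$ and eigenvalues of $\mJ(\vx^*)^\top\mJ(\vx^*)$, and noting that this proposition is exactly what justifies the earlier tacit assumption that $\sum_{i=1}^n \vg_i(\vz_i^t)\vg_i(\vz_i^t)^\top = \mJ$-type Gram matrices are non-singular in a neighborhood of $\vx^*$.
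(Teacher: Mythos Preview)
Your proposal is correct and follows essentially the same approach as the paper: both argue via the singular value decomposition of $\mJ(\vx^*)$ to conclude that the eigenvalues (hence singular values) of $\mJ(\vx^*)^\top\mJ(\vx^*)$ are the squares of the singular values of $\mJ(\vx^*)$, so the minimum is $\mu^2$. Your additional Rayleigh-quotient remark is a fine alternative but not needed beyond the SVD argument.
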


Under the continuous assumptions on $\vf(\cdot)$ and $\mJ(\cdot)$, we can provide the H\"older continuity of the Gram matrices.

\begin{lem}
\label{le:L-JJ}
{
    Under Assumptions \ref{asm:L-f} and \ref{asm:holder-g}, we have 
    \begin{align*}
    \Norm{\mJ(\vy)^\top\mJ(\vy) - \mJ(\vx)^\top\mJ(\vx)} & \leq 2L_f\fH_\nu\Norm{\vy-\vx}^\nu
    \end{align*}
    and
    \begin{align*}
    \Norm{\vg_i(\vy)\vg_i(\vy)^\top - \vg_i(\vx)\vg_i(\vx)^\top}   \leq 2L_f\fH_\nu\Norm{\vy-\vx}^\nu
    \end{align*}
    for all $\vx, \vy\in\BR^n$ and $i \in [n]$.
}
\end{lem}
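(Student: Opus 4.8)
The plan is to prove both inequalities by the same telescoping trick: add and subtract a mixed term so that each difference of products becomes a sum of two terms, each of which has one fixed factor and one factor that is a difference. Concretely, for the Jacobian Gram matrices I would write
\begin{align*}
\mJ(\vy)^\top\mJ(\vy) - \mJ(\vx)^\top\mJ(\vx)
= \mJ(\vy)^\top\big(\mJ(\vy)-\mJ(\vx)\big) + \big(\mJ(\vy)-\mJ(\vx)\big)^\top\mJ(\vx),
\end{align*}
and then apply the triangle inequality and submultiplicativity of the spectral norm, so that
\begin{align*}
\Norm{\mJ(\vy)^\top\mJ(\vy) - \mJ(\vx)^\top\mJ(\vx)}
\leq \big(\Norm{\mJ(\vy)} + \Norm{\mJ(\vx)}\big)\,\Norm{\mJ(\vy)-\mJ(\vx)}.
\end{align*}

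The next step is to bound $\Norm{\mJ(\vx)}$ uniformly. This is where Assumption \ref{asm:L-f} enters: since $\vf$ is $L_f$-Lipschitz, its Jacobian is bounded in spectral norm by $L_f$ everywhere. I would justify this by noting that for any unit vector $\vv$, the directional derivative $\mJ(\vx)\vv$ is the derivative of $t\mapsto \vf(\vx+t\vv)$, whose norm is at most $L_f$ by the Lipschitz bound; taking the supremum over $\vv$ gives $\Norm{\mJ(\vx)}\leq L_f$ (one should perhaps remark that Lipschitz continuity of $\vf$ together with Assumption \ref{asm:holder-g} guarantees $\vf$ is differentiable, or simply that wherever $\mJ$ is defined this bound holds; since the paper already treats $\mJ$ as a well-defined function, I will take differentiability as given). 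Combining this with Assumption \ref{asm:holder-g}, which gives $\Norm{\mJ(\vy)-\mJ(\vx)}\leq\fH_\nu\Norm{\vy-\vx}^\nu$, yields
\begin{align*}
\Norm{\mJ(\vy)^\top\mJ(\vy) - \mJ(\vx)^\top\mJ(\vx)} \leq 2L_f\fH_\nu\Norm{\vy-\vx}^\nu.
\end{align*}

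For the second inequality, the argument is identical with $\mJ$ replaced by the single gradient $\vg_i$, viewed as a $d\times 1$ matrix: write $\vg_i(\vy)\vg_i(\vy)^\top - \vg_i(\vx)\vg_i(\vx)^\top = \vg_i(\vy)(\vg_i(\vy)-\vg_i(\vx))^\top + (\vg_i(\vy)-\vg_i(\vx))\vg_i(\vx)^\top$, then use $\Norm{\vg_i(\vx)}\leq L_f$ (a column of $\mJ(\vx)$ has norm at most the spectral norm of $\mJ(\vx)$) and $\Norm{\vg_i(\vy)-\vg_i(\vx)}\leq\Norm{\mJ(\vy)-\mJ(\vx)}\leq\fH_\nu\Norm{\vy-\vx}^\nu$ (since $\vg_i(\vy)-\vg_i(\vx)$ is a column of $\mJ(\vy)-\mJ(\vx)$). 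The only mild subtlety is the bookkeeping that a single row/column of a matrix has Euclidean norm bounded by the matrix spectral norm; everything else is the triangle inequality and submultiplicativity. There is no real obstacle here — the main point is choosing the right mixed-term split and invoking the uniform bound $\Norm{\mJ}\leq L_f$, which is the one place the Lipschitz hypothesis on $\vf$ (as opposed to just Hölder continuity of $\mJ$) is actually used.
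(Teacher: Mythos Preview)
Your proposal is correct and follows essentially the same approach as the paper: the same add-and-subtract telescoping split, then triangle inequality and submultiplicativity, combined with the bounds $\Norm{\mJ(\cdot)}\leq L_f$ and the H\"older continuity of $\mJ$. The paper separates the ingredients $\Norm{\vg_i(\vx)}\leq\Norm{\mJ(\vx)}\leq L_f$ and $\Norm{\vg_i(\vy)-\vg_i(\vx)}\leq\fH_\nu\Norm{\vy-\vx}^\nu$ into auxiliary lemmas (proved via $\vg_i(\vx)=\mJ(\vx)^\top\ve_i$), whereas you justify them inline; one small slip is that $\vg_i(\vx)$ is the transpose of a \emph{row} of $\mJ(\vx)$, not a column, but the norm bound goes through identically.
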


Recall the design of \IGN~method is motivated by the estimation $\mH_t\approx\mJ(\vx_t)^\top\mJ(\vx_t)$, which indicates we can connect Proposition \ref{prop:b-JJ-sing} and Lemma \ref{le:L-JJ} to bound the spectrum of $\mH_t$ as follows.

\begin{lem}
\label{le:lb-Ht-new-1}
Under Assumptions \ref{asm:L-f}, \ref{asm:holder-g} and \ref{asm:b-J-sing}, running \IGN~(Algorithm \ref{alg:IGN-1}) with $\mH^0 = \mJ(\vx^0)^\top\mJ(\vx^0)$ and~$\mG^0=(\mH^0)^{-1}$ holds that
\begin{align*}
    \sigma_{\min}(\mH^t) \geq \mu^2 - 2L_f\fH_\nu\sum_{i=1}^n \Norm{\vz_i^t - \vx^*}^\nu
\end{align*}
for all $t\geq 0$.
\end{lem}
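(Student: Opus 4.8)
The goal is to lower-bound $\sigma_{\min}(\mH^t)$, where $\mH^t = \sum_{i=1}^n \vg_i(\vz_i^t)\vg_i(\vz_i^t)^\top$ by construction and recursion~(\ref{eq:recursion-1}). The plan is to compare $\mH^t$ with the Gram matrix $\mJ(\vx^*)^\top\mJ(\vx^*) = \sum_{i=1}^n \vg_i(\vx^*)\vg_i(\vx^*)^\top$ at the non-degenerate solution, whose smallest singular value is $\mu^2$ by Proposition~\ref{prop:b-JJ-sing}, and to absorb the discrepancy using the per-component H\"older bound from Lemma~\ref{le:L-JJ}.

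First I would verify that the stated initialization is consistent with the definition of $\mH^t$ in~(\ref{eq:note-1}). Since $\vz_i^0$ should be taken as $\vx^0$ for all $i$ (this is the natural initialization matching $\mH^0=\mJ(\vx^0)^\top\mJ(\vx^0)=\sum_i \vg_i(\vx^0)\vg_i(\vx^0)^\top$), and since the recursion for $\mH^{t+1}$ in~(\ref{eq:recursion-1}) exactly replaces the rank-one term $\vg_{i_t}(\vz_{i_t}^t)\vg_{i_t}(\vz_{i_t}^t)^\top$ by $\vg_{i_t}(\vx^{t+1})\vg_{i_t}(\vx^{t+1})^\top$ while the update rule~(\ref{eq:update-1}) sets $\vz_{i_t}^{t+1}=\vx^{t+1}$, a straightforward induction shows $\mH^t = \sum_{i=1}^n \vg_i(\vz_i^t)\vg_i(\vz_i^t)^\top$ holds for all $t\geq 0$. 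This is the bookkeeping step and should be essentially immediate.

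The core estimate is then a telescoping/triangle-inequality argument: for any unit vector $\vv\in\BR^d$,
\begin{align*}
\vv^\top\mH^t\vv &= \vv^\top\mJ(\vx^*)^\top\mJ(\vx^*)\vv + \vv^\top\!\left(\mH^t - \mJ(\vx^*)^\top\mJ(\vx^*)\right)\!\vv \\
&\geq \mu^2 - \Norm{\mH^t - \mJ(\vx^*)^\top\mJ(\vx^*)},
\end{align*}
using Proposition~\ref{prop:b-JJ-sing} for the first term and the variational characterization of $\sigma_{\min}$. It then remains to bound the perturbation: writing $\mH^t - \mJ(\vx^*)^\top\mJ(\vx^*) = \sum_{i=1}^n\left(\vg_i(\vz_i^t)\vg_i(\vz_i^t)^\top - \vg_i(\vx^*)\vg_i(\vx^*)^\top\right)$ and applying the triangle inequality followed by the second inequality of Lemma~\ref{le:L-JJ} to each summand gives $\Norm{\mH^t - \mJ(\vx^*)^\top\mJ(\vx^*)} \leq 2L_f\fH_\nu\sum_{i=1}^n\Norm{\vz_i^t-\vx^*}^\nu$. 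Combining, $\sigma_{\min}(\mH^t) = \min_{\Norm{\vv}=1}\vv^\top\mH^t\vv \geq \mu^2 - 2L_f\fH_\nu\sum_{i=1}^n\Norm{\vz_i^t-\vx^*}^\nu$, which is exactly the claim.

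I do not anticipate a genuine obstacle here — the argument is a clean perturbation bound. The only point requiring slight care is the first step (confirming the closed form $\mH^t=\sum_i\vg_i(\vz_i^t)\vg_i(\vz_i^t)^\top$ under the given initialization), since the algorithm statement never makes the initialization of $\{\vz_i^0\}$ fully explicit; one should note that the natural choice $\vz_i^0=\vx^0$ is what makes $\mH^0=\mJ(\vx^0)^\top\mJ(\vx^0)$ and is presumably implicit in the algorithm. Everything after that is the two-line perturbation estimate above.
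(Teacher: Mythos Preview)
Your proposal is correct and follows essentially the same approach as the paper: bound $\Norm{\mH^t - \mJ(\vx^*)^\top\mJ(\vx^*)}$ term-by-term via Lemma~\ref{le:L-JJ}, then combine with Proposition~\ref{prop:b-JJ-sing} to lower-bound $\sigma_{\min}(\mH^t)$. The paper's proof is slightly terser (it does not spell out the induction verifying $\mH^t=\sum_i\vg_i(\vz_i^t)\vg_i(\vz_i^t)^\top$ and phrases the last step as a semidefinite ordering rather than via the Rayleigh quotient), but the substance is identical.
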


Lemma \ref{le:lb-Ht-new-1} indicates that if all of the points $\vz_1^t,\dots,\vz_n^t$ are sufficiently close to the solution $\vx^*$, the matrix $\mH^{t}$ is positive-definite, which guarantees that the inverse of $\mH^{t+1}$ (i.e., matrix $\mG^{t+1}$) in the algorithm is always well-defined.
Based on this intuition, we use induction to show the positive-definiteness of matrices $\mH^t$ and $\mI + (\mV^t)^\top\mG^{t}\mU^t$, and the local superlinear convergence rate of the proposed method.

\begin{thm}
\label{thm:holder-IGN-1}
{
    Under Assumptions \ref{asm:L-f}, \ref{asm:holder-g} and \ref{asm:b-J-sing}, running \IGN~(Algorithm \ref{alg:IGN-1}) with initialization $\vx^0\in\BR^d$, $\mH^0 = \mJ(\vx^0)^\top\mJ(\vx^0)$ and $\mG^0=(\mH^0)^{-1}$ such that
    \begin{align*}
    \Norm{\vx^0 - \vx^*}\leq \left(\frac{\mu^2}{4L_f\fH_\nu n}\right)^{1/\nu},
    \end{align*} 
    we have $\mH^t \succeq (\mu^2/2)\mI$ and $\sigma_{\min}(\mI + (\mV^t)^\top\mG^{t}\mU^t)>0$ for all $t\geq 0$.
    Additionally, there exists sequence $\{r^t\}$ such that $\Norm{\vx^t-\vx^*}\leq r_t$ and it holds

    \begin{align*}
     r_{t+1}\leq  c^{(1+\nu)^{\left(\floor{{t}/{n}}-1\right)}}r_t \qquad\text{with}\qquad c = 1- \frac{1}{n}\left(1-\left(\frac{1}{2 
  (1+\nu)}\right)^{(1+\nu)}\right)
    \end{align*}
    for all $t\geq n$.
}
\end{thm}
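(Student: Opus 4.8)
The plan is to prove the theorem by a single induction on $t$ that simultaneously establishes three things: (i) $\mH^t \succeq (\mu^2/2)\mI$, (ii) the Woodbury-update matrix $\mI + (\mV^t)^\top\mG^t\mU^t$ is nonsingular, and (iii) $\vx^t$ stays in the basin $\Norm{\vx^t - \vx^*}^\nu \le \mu^2/(4L_f\fH_\nu n)$, with a tracking sequence $r_t$. The base case uses $\vz_i^0 = \vx^0$ for all $i$ (or whatever the initialization dictates) together with the hypothesis on $\Norm{\vx^0-\vx^*}$, so that Lemma~\ref{le:lb-Ht-new-1} gives $\sigma_{\min}(\mH^0) \ge \mu^2 - 2L_f\fH_\nu \cdot n \cdot (\mu^2/(4L_f\fH_\nu n)) = \mu^2/2$. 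For the inductive step, assuming (iii) holds for all the relevant $\vz_i^t$ (each $\vz_i^t$ equals some past iterate $\vx^{s}$ with $s \le t$), Lemma~\ref{le:lb-Ht-new-1} again yields $\mH^{t} \succeq (\mu^2/2)\mI$; nonsingularity of $\mI + (\mV^t)^\top\mG^t\mU^t$ then follows because this matrix governs the inverse of $\mH^{t+1}$, and $\mH^{t+1}$ is itself a Gram-type matrix that stays PD by the same Lemma applied with the updated point set — so the Woodbury formula is legitimate and $\mG^{t+1} = (\mH^{t+1})^{-1}$ is well-defined and also $\succeq 0$ with $\mG^{t+1} \preceq (2/\mu^2)\mI$.

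The heart of the argument is the error recursion. Starting from $\vx^{t+1} = \mG^t\vu^t = (\mH^t)^{-1}\vu^t$ and the identity $\vx^* $ exactly solves a perturbed version of the normal equations — i.e. writing $\mH^t \vx^{t+1} = \vu^t = \sum_i (\vg_i(\vz_i^t)^\top \vz_i^t - f_i(\vz_i^t))\vg_i(\vz_i^t)$ and subtracting $\mH^t\vx^* = \sum_i \vg_i(\vz_i^t)\vg_i(\vz_i^t)^\top\vx^*$ — I would get
\begin{align*}
\mH^t(\vx^{t+1} - \vx^*) = \sum_{i=1}^n \vg_i(\vz_i^t)\big(\vg_i(\vz_i^t)^\top(\vz_i^t - \vx^*) - f_i(\vz_i^t) + f_i(\vx^*)\big),
\end{align*}
using $f_i(\vx^*) = 0$. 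The bracketed term is exactly the linearization residual of $f_i$ between $\vz_i^t$ and $\vx^*$, which by $\nu$-Hölder continuity of $\mJ$ (Assumption~\ref{asm:holder-g}) is bounded by $\frac{\fH_\nu}{1+\nu}\Norm{\vz_i^t - \vx^*}^{1+\nu}$. Combining with $\Norm{\vg_i} \le L_f$ and $\Norm{(\mH^t)^{-1}} \le 2/\mu^2$ gives
\begin{align*}
\Norm{\vx^{t+1} - \vx^*} \le \frac{2 L_f \fH_\nu}{(1+\nu)\mu^2}\sum_{i=1}^n \Norm{\vz_i^t - \vx^*}^{1+\nu}.
\end{align*}
Now the cyclic structure is what produces the compounding superlinear rate: after a full epoch, each $\vz_i^t$ has been refreshed to a recent iterate, so $\Norm{\vz_i^t - \vx^*}$ can be bounded by $r_{s}$ for $s$ in the current window, and I would define $r_{t+1}$ by the above inequality with the $r$'s substituted in. Unrolling over epochs, the exponent $(1+\nu)$ gets iterated $\floor{t/n}$ times, which is the source of the $c^{(1+\nu)^{\floor{t/n}-1}}$ factor; the constant $c = 1 - \frac1n\big(1 - (1/(2(1+\nu)))^{1+\nu}\big)$ should fall out of carefully bookkeeping the worst-case "oldest" $\vz_i^t$ in a window of length $n$ and the contraction factor $\tfrac{2L_f\fH_\nu}{(1+\nu)\mu^2}$ evaluated at the basin radius.

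I expect the main obstacle to be the \textbf{bookkeeping of the $r_t$ sequence across epoch boundaries}: because different components $\vz_i^t$ were last updated at different times spread over the previous $n$ iterations, the recursion for $r_{t+1}$ mixes $r_t, r_{t-1}, \dots, r_{t-n+1}$ with $(1+\nu)$-powers, and one must argue this is dominated by a clean single-term recursion of the stated form. The standard device is to show $r_t$ is monotonically decreasing (so $\max$ over the window is $r_{t-n+1}$, or conversely the sum is controlled by $n$ times the largest), to verify the basin-invariance $r_t \le (\mu^2/(4L_f\fH_\nu n))^{1/\nu}$ is preserved (this needs the contraction constant to be $\le 1$, which is why the particular form of $c$ matters and why one needs $t \ge n$ before the superlinear regime kicks in — during the first epoch the bound is merely $r_{t+1} \le c\, r_t$), and then to telescope the $(1+\nu)$-exponents. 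Getting the off-by-one in $\floor{t/n}-1$ versus $\floor{t/n}$ exactly right, and confirming the claimed $c$ rather than some looser constant, will require the most care; everything else (PD of $\mH^t$, validity of Woodbury, $\mG^t = (\mH^t)^{-1}$) is a direct consequence of Lemma~\ref{le:lb-Ht-new-1} and Proposition~\ref{prop:b-JJ-sing} once the iterates are confined to the basin.
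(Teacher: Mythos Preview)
Your proposal is correct and follows essentially the same approach as the paper: the induction on $t$ via Lemma~\ref{le:lb-Ht-new-1}, the error recursion you derive (which is the paper's Lemma~\ref{le:main-iter-1}), and the nonsingularity of $\mI + (\mV^t)^\top\mG^t\mU^t$ via the matrix determinant lemma applied to $\mH^{t+1}$ (the paper's Lemma~\ref{le:via-update}) all match. The ``bookkeeping'' obstacle you correctly flag is handled in the paper by introducing an explicit auxiliary sequence $a_t(n,\nu)$ (Definition~\ref{def:series}) satisfying $a_t = \frac{1}{2(1+\nu)n}\sum_{j=t-n}^{t-1} a_j^{1+\nu}$ for $t>n$, then proving monotonicity, a linear rate $a_{t+1}\le c\,a_t$ for $t\ge n$, and a bootstrap lemma (if $a_{t+1}\le c_1 a_t$ on one epoch then $a_{t+1}\le c_1^{1+\nu}a_t$ on the next) --- this last step is the precise mechanism that iterates the exponent and produces the stated $c^{(1+\nu)^{\lfloor t/n\rfloor-1}}$, so you may find it cleaner than bounding the window sum by $n$ times its maximum.
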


Observe that the term of $c$ in Theorem \ref{thm:holder-IGN-1} is monotonically decreasing with respect to $\nu\in(0,1]$, we can bound it by $1-15/(16n) \leq c < 1-1/(2n)$ and simplify the superlinear convergence as follows.
\begin{cor}
\label{cor:ign1}
    Under the settings and notations of Theorem \ref{thm:holder-IGN-1}, we have
    \begin{align*}
        r_{t+1} < \Big(1-\frac{1}{2n}\Big)^{(1+\nu)^{\left(\floor{{t}/{n}}-1\right)}}r_t
    \end{align*}
    for all $t\geq n$.
\end{cor}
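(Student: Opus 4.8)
The plan is to derive Corollary \ref{cor:ign1} directly from Theorem \ref{thm:holder-IGN-1} by showing the monotonicity of $c$ in $\nu$ and bounding it from above by $1-1/(2n)$. First I would recall that Theorem \ref{thm:holder-IGN-1} already gives, for all $t\geq n$,
\begin{align*}
r_{t+1}\leq c^{(1+\nu)^{\left(\floor{t/n}-1\right)}}r_t,
\qquad c = 1- \frac{1}{n}\left(1-\left(\frac{1}{2(1+\nu)}\right)^{(1+\nu)}\right),
\end{align*}
so it suffices to prove that $c<1-1/(2n)$, since then, because the exponent $(1+\nu)^{\left(\floor{t/n}-1\right)}$ is positive for $t\geq n$ and the base of the power lies in $(0,1)$, raising the larger quantity $1-1/(2n)$ to that positive power still dominates $c$ raised to the same power, giving the claimed inequality. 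Concretely, for $0<a<b<1$ and any $p>0$ we have $a^p<b^p$, applied with $a=c$, $b=1-1/(2n)$, $p=(1+\nu)^{\left(\floor{t/n}-1\right)}$.

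The core of the argument is therefore the scalar claim: $c<1-1/(2n)$ for all $\nu\in(0,1]$. Writing $c=1-\frac1n\,g(\nu)$ with $g(\nu)=1-\left(\frac{1}{2(1+\nu)}\right)^{1+\nu}$, the inequality $c<1-1/(2n)$ is equivalent to $g(\nu)>1/2$, i.e. $\left(\frac{1}{2(1+\nu)}\right)^{1+\nu}<1/2$. I would verify this by noting that for $\nu\in(0,1]$ we have $1+\nu\in(1,2]$, hence $2(1+\nu)\in(2,4]$ and $\frac{1}{2(1+\nu)}\in[1/4,1/2)$; raising a number in $[1/4,1/2)$ to a power $\geq 1$ keeps it at most its value, so $\left(\frac{1}{2(1+\nu)}\right)^{1+\nu}\leq \frac{1}{2(1+\nu)}<1/2$, with strict inequality since $2(1+\nu)>2$. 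This gives $g(\nu)>1/2$ and thus the strict bound $c<1-1/(2n)$. (The paper's auxiliary claim $c\geq 1-15/(16n)$, obtained by evaluating $g$ at $\nu=1$ where $\left(\frac14\right)^2=1/16$, is the reverse-direction bound and corresponds to monotonicity: one checks $h(s)=s\log s$ is increasing on $[1/4,1/2]$ after substituting $s=\frac{1}{2(1+\nu)}$, so $\left(\frac{1}{2(1+\nu)}\right)^{1+\nu}=\exp((1+\nu)\log\frac{1}{2(1+\nu)})$ is handled by tracking the sign of the derivative; but this direction is not needed for the corollary and I would only include it if I also wanted to state the two-sided bound.)

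Assembling: fix $t\geq n$ and set $p=(1+\nu)^{\left(\floor{t/n}-1\right)}$. Since $\floor{t/n}\geq 1$, we have $p\geq (1+\nu)^0=1>0$ (and more generally $p\geq 1$). By Theorem \ref{thm:holder-IGN-1}, $r_{t+1}\leq c^{p}r_t$, and by the scalar claim $0<c<1-\frac{1}{2n}<1$, so $c^p<\left(1-\frac{1}{2n}\right)^p$; combining, $r_{t+1}<\left(1-\frac{1}{2n}\right)^{(1+\nu)^{\left(\floor{t/n}-1\right)}}r_t$, which is exactly the assertion of Corollary \ref{cor:ign1}. The only mild subtlety — and the one place to be careful — is ensuring the power $p$ is genuinely positive (so that the strict inequality between bases is preserved rather than reversed), which holds precisely because $t\geq n$ forces $\floor{t/n}-1\geq 0$ and $1+\nu>1$; there is no real obstacle beyond this bookkeeping, as everything else is the elementary estimate on $\left(\frac{1}{2(1+\nu)}\right)^{1+\nu}$.
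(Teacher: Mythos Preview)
Your proposal is correct and follows essentially the same approach as the paper: recall the bound from Theorem \ref{thm:holder-IGN-1}, show $c<1-1/(2n)$, and substitute. The only minor difference is that the paper asserts (without proof) that $c$ is monotonically decreasing in $\nu$ and reads off both endpoints, whereas you give a direct elementary argument that $\left(\tfrac{1}{2(1+\nu)}\right)^{1+\nu}<\tfrac12$ for $\nu\in(0,1]$; your route is slightly cleaner since it avoids verifying the monotonicity claim.
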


Theorem \ref{thm:holder-IGN-1} also indicates that the larger $\nu\in(0,1]$ leads to faster superlinear convergence rate.
In the case of $\nu=1$, our H\"older continuous condition (Assumption \ref{asm:holder-g}) degenerates to the Lipschitz continuity, then we can achieve the $n$-step local quadratic convergence rate as follows.

\begin{cor}
\label{cor:qua-1}
Under the settings and notations of Theorem \ref{thm:holder-IGN-1} with $\nu=1$, we have the $n$-step quadratic convergence
\begin{align*}
    r_{t} \leq \frac{1}{4}r_{t-n}^{2}
\end{align*}
for all $t\geq n$.
\end{cor}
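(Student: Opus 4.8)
The plan is to simply specialize Theorem \ref{thm:holder-IGN-1} (equivalently, Corollary \ref{cor:ign1}) to the case $\nu = 1$ and then telescope the per-round contraction into a single two-step-style inequality relating $r_t$ to $r_{t-n}$. First I would substitute $\nu = 1$ into the formula for $c$, which gives $c = 1 - \frac{1}{n}\left(1 - \left(\frac{1}{4}\right)^{2}\right) = 1 - \frac{15}{16n}$; in particular $c < 1$, consistent with the bound $1 - 15/(16n) \le c < 1 - 1/(2n)$ noted after the theorem. With $\nu = 1$ the exponent $(1+\nu)^{(\lfloor t/n\rfloor - 1)}$ becomes $2^{\lfloor t/n\rfloor - 1}$, so Theorem \ref{thm:holder-IGN-1} reads $r_{t+1} \le c^{\,2^{\lfloor t/n\rfloor - 1}} r_t$ for all $t \ge n$.

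Next I would fix $t \ge n$ (for the statement of the corollary, $t$ here plays the role of the index on the left-hand side) and write $t = n + m$ with $m \ge 0$; to control $r_t$ in terms of $r_{t-n}$ I would multiply the single-step bounds across the $n$ consecutive indices from $t-n$ up to $t-1$. Concretely, for each $j$ with $t - n \le j \le t-1$ we have $r_{j+1} \le c^{\,2^{\lfloor j/n\rfloor - 1}} r_j$ (here one needs $j \ge n$, which holds once $t \ge 2n$; the boundary range $n \le t < 2n$ has to be handled by noting $\lfloor j/n \rfloor \ge 1$ throughout and the claimed bound is even easier there since the contraction factors are $\le c < 1/4^{1/?}$ — more carefully, I would just use that for $t \ge n$ every relevant index $j$ satisfies $j \ge n$ only when $t \ge 2n$, so I would instead anchor the telescoping at the last window and use monotonicity $r_{j} \le r_{t-n}$ from the contraction to absorb the first window). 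Telescoping gives
\begin{align*}
r_t \le \left(\prod_{j=t-n}^{t-1} c^{\,2^{\lfloor j/n\rfloor - 1}}\right) r_{t-n} = c^{\left(\sum_{j=t-n}^{t-1} 2^{\lfloor j/n\rfloor - 1}\right)} r_{t-n}.
\end{align*}
Within a block of $n$ consecutive indices the value of $\lfloor j/n\rfloor$ is constant, equal to some $k \ge 1$, so the exponent sum is exactly $n \cdot 2^{k-1} \ge n$. Hence $r_t \le c^{\,n} r_{t-n} \le c^{\,n} \cdot r_{t-n}$, and since $c < 1 - 1/(2n)$ we get $c^n < (1 - 1/(2n))^n < e^{-1/2} < 1/4$... but that only yields $r_t \le \frac14 r_{t-n}$, not the quadratic $r_t \le \frac14 r_{t-n}^2$.

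To recover the quadratic rate I need to be less wasteful: the contraction factor $c^{2^{k-1}}$ is applied to $r_j$, and I should keep track of how the $r_j$ themselves shrink across the window rather than bounding them all by $r_{t-n}$. The cleaner route is to peel off one extra factor of $r$: from $r_t \le c^{n\cdot 2^{k-1}} r_{t-n}$ with $k = \lfloor (t-1)/n\rfloor \ge 1$, and using that $r_{t-n} \le (\mu^2/(4L_f\fH_\nu n))^{1/\nu}$ by the initialization hypothesis together with monotonic decrease of $r_\cdot$ (so every iterate stays in the basin), one has a uniform bound $r_{t-n} \le r_0 \le (\mu^2/(4L_f\fH_\nu n))$ when $\nu=1$; then $c^{n\cdot 2^{k-1}} \le c^{n} = (1-15/(16n))^n$ and I would verify $(1-15/(16n))^n \le \tfrac14 \cdot (4L_f\fH_\nu n/\mu^2)$ is \emph{not} what is wanted either. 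The honest statement is that the corollary's $\frac14 r_{t-n}^2$ must come from the structure of the proof of Theorem \ref{thm:holder-IGN-1} itself, where the per-round bound is actually of the form $r_{t+1} \lesssim (\text{const}) \, r_{t-n+1}^{1+\nu}$ on the indices where a full cycle has elapsed; so the cleanest proof of the corollary is to go back into the induction underlying the theorem and read off, at $\nu = 1$, that each completed cycle contracts $r$ to a constant times its square. \textbf{The main obstacle} is precisely this: bridging between the ``product of geometric-in-$t$ factors'' form stated in Theorem \ref{thm:holder-IGN-1} and the ``one-cycle quadratic'' form of Corollary \ref{cor:qua-1}; I expect the intended proof either invokes an intermediate inequality from the theorem's proof (of the type $r_{t+1} \le (\text{const})\,r_{t-n+1}^{1+\nu}$ for $t \equiv n-1 \pmod n$) or carefully telescopes while tracking the doubling exponents, and getting the constant down to exactly $\tfrac14$ will require using the explicit initialization radius $\Norm{\vx^0 - \vx^*} \le (\mu^2/(4L_f\fH_\nu n))^{1/\nu}$ rather than just $c < 1$.
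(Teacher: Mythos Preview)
Your proposal does not reach a proof, and the approaches you try cannot be made to work as stated. Telescoping the per-step bound $r_{t+1}\le c^{\,2^{\lfloor t/n\rfloor-1}}r_t$ over a window of length $n$ can only produce an inequality of the form $r_t\le (\text{factor})\cdot r_{t-n}$, never $r_t\le (\text{factor})\cdot r_{t-n}^{2}$: multiplying $n$ one-step linear contractions gives a linear contraction, regardless of how small the factor is. (Incidentally, your arithmetic $e^{-1/2}<1/4$ is wrong; $e^{-1/2}\approx 0.606$.) Your final diagnosis---that the corollary must be read off from an intermediate inequality inside the theorem's proof---is correct, but you misidentify which ingredient matters: it is \emph{not} the initialization radius $\Norm{\vx^0-\vx^*}\le(\mu^2/(4L_f\fH_\nu n))^{1/\nu}$.

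The paper's proof is two lines once you use the actual construction of $\{r_t\}$. In the proof of Theorem~\ref{thm:holder-IGN-1} the sequence is defined by $r_0=\max\{\Norm{\vx^0-\vx^*},1\}$ and $r_t=a_t(n,\nu)\,r_0$, where $\{a_t(n,\nu)\}$ is the auxiliary sequence of Definition~\ref{def:series}. Lemma~\ref{le:n-qua-con} gives directly $a_t(n,\nu)\le \frac{1}{2(1+\nu)}\bigl(a_{t-n}(n,\nu)\bigr)^{1+\nu}$ for $t\ge n$ (this is immediate from the recursion for $a_t$ together with monotonicity of the sequence). Hence
\[
r_t=a_t r_0\le \frac{1}{2(1+\nu)}a_{t-n}^{\,1+\nu}r_0=\frac{1}{2(1+\nu)\,r_0^{\nu}}(a_{t-n}r_0)^{1+\nu}=\frac{1}{2(1+\nu)\,r_0^{\nu}}\,r_{t-n}^{\,1+\nu}\le \frac{1}{2(1+\nu)}\,r_{t-n}^{\,1+\nu},
\]
the last step because $r_0\ge 1$. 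Setting $\nu=1$ yields $r_t\le\frac14 r_{t-n}^{2}$. The two things you were missing are (i) that $r_t$ is literally a scalar multiple of the auxiliary sequence, so any power-type relation on $a_t$ transfers to $r_t$, and (ii) that the normalization $r_0\ge 1$ (from the $\max$ with $1$) is exactly what absorbs the extra $r_0^{\nu}$ when you pass from $a_{t-n}^{1+\nu}$ to $r_{t-n}^{1+\nu}$.
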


\section{The Extension to Mini-Batch Methods}\label{sec:ign_k}
\label{sec:group}

We can also improve the efficiency of \IGN~method by using the mini-batch update. 
Specifically, we consider the mini-batch size $k$ and  divide the indices into $m=\ceil{n/k}$ non-overlapping subsets,  
i.e., we partition the index set $[n]=\{1,\dots,n\}$ into subsets $\{\fS_1,\dots,\fS_m\}$ such that $|\fS_1|=\dots=|\fS_{m-1}|= k$, $\cup_{i=1}^m\fS_i=[n]$ and~$\fS_i\cap\fS_j=\emptyset$ for all distinct~$i,j\in[k]$. 

The mini-batch variant of \IGN~also apply the update of the form $\vx^{t+1}=\mG^t\vu^t$.
Different from \IGN, we update variables $\{\vz_i^t\}_{i=1}^m$ with the smaller period~$m=\ceil{n/k}$ such that
\begin{align}
\label{eq:update-2}
    \vz_i^{t+1} = \begin{cases}
        \vx^{t+1}, & \text{if~} i=i_t, \\
        \vz_i^t, & \text{otherwise},
    \end{cases}   
\end{align}
where $i_t ={t\%m} + 1$.

We establish recursions of aggregated variables by a mini-batch way as follows
\begin{align}\label{eq:recursion-k}
\small
\!\!\!\!\begin{cases}    
\displaystyle{\vu^{t+1} \!=\! \vu^t \!-\!\! \sum_{j\in\fS_{i_t}}\!\!\left(\vg_{j}(\vz_{i_t}^t)^\top\vz_{i_t}^{t} \!-\! f_{j}(\vz_{i_t}^t)\right)\vg_{j}(\vz_{i_t}^t) 
 \!+\!\!\sum_{j\in\fS_{i_t}}\!\!\left(\vg_{j}(\vx^{t+1})^\top\vx^{t+1} \!-\! f_{j}(\vx^{t+1})\right)\vg_{j}(\vx^{t+1}),}  \\[0.4cm]
\displaystyle{\mH^{t+1} \!=\! \mH^t - \sum_{j\in\fS_{i_t}}\vg_{j}(\vz_{i_t}^t)\vg_{j}(\vz_{i_t}^t)^\top+ \sum_{j\in\fS_{i_t}}\vg_{j}(\vx^{t+1})\vg_{j}(\vx^{t+1})^\top,}  \\[0.4cm]
\displaystyle{\mG^{t+1} \!=\! \mG^{t} - \mG^{t}\mU^t(\mI + (\mV^t)^\top\mG^{t}\mU^t)^{-1}(\mV^t)^\top\mG^{t},} 
\end{cases}
\end{align}
where we construct matrices $\mU^t, \mV^t\in\BR^{d\times 2|\fS_{i_t}|}$ as
\begin{align*}
\begin{cases}    
\mU^t = \Big[- \vg_{j_1}(\vz_{i_t}^t),~~ 
    \vg_{j_1}(\vx^{t+1}),~\cdots~,~
    - \vg_{j_{|\fS_{i_t}|}}(\vz_{i_t}^t),~~
    \vg_{j_{|\fS_{i_t}|}}(\vx^{t+1})
\Big],\\[0.2cm]
\mV^t = \Big[\vg_{j_1}(\vz_{i_t}^t),~~ 
    \vg_{j_1}(\vx^{t+1}),~\cdots~,~
    \vg_{j_{|\fS_{i_t}|}}(\vz_{i_t}^t),~~
    \vg_{j_{|\fS_{i_t}|}}(\vx^{t+1})
\Big],
\end{cases}
\end{align*}
and indices $j_1,\dots,j_{|\fS_{i_t}|}$ are the elements in subset $\fS_{i_t}$ such that $|\fS_{i_t}|\leq k$.

We formally present the procedure of the Mini-Batch Incremental Gauss-Newton (\MBIGN) method~in Algorithm~\ref{alg:MB-IGN} (see Appendix \ref{sec:alg-k}).
The memory cost of \MBIGN~is $\fO(nd+d^2)$, matching the complexity of $\IGN$.
Each iteration of \MBIGN~includes the matrix multiplication of $\mG^t$, $\mU^t$ and $\mV^t$ within the complexity of $\fO(kd^2)$ flops.
It is worth noting that the mini-batch update in \MBIGN~can be efficiently implemented by block matrix operation that takes advantage of parallel
computation \cite{davis1998block}.

Formally, we present the following convergence results of \MBIGN.

\begin{thm}
\label{thm:Group-1}
{
    Under Assumptions \ref{asm:L-f}, \ref{asm:holder-g} and \ref{asm:b-J-sing}, running \MBIGN~(Algorithm \ref{alg:MB-IGN}) with mini-batch size $k$ and initialization $\vx^0\in\BR^d$, $\mH^0 = \mJ(\vx^0)^\top\mJ(\vx^0)$ and $\mG^0=(\mH^0)^{-1}$ such that
    \begin{align*}
    \Norm{\vx^0 - \vx^*}\leq \left(\frac{\mu^2}{4kL_f\fH_\nu\ceil{{n}/{k}}}\right)^{1/\nu}, 
    \end{align*} 
    we have $\mH^t \succeq (\mu^2/2)\mI\,$ and $\sigma_{\min}(\mI + (\mV^t)^\top\mG^{t}\mU^t)>0$ for all $t\geq 0$.
    Additionally, there exists sequence $\{r_t\}$ such that $\Norm{\vx^t-\vx^*}\leq r_t$ and it holds

    \begin{align*}
     r_{t+1}\leq  c^{(1+\nu)^{\left(\floor{\frac{t}{\ceil{n/k}}}-1\right)}}r_t \qquad\text{with}\qquad c = 1- \frac{1}{\ceil{n/k}}\left(1-\left(\frac{1}{2 
  (1+\nu)}\right)^{(1+\nu)}\right).
    \end{align*}
}

\end{thm}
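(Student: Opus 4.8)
The plan is to adapt the proof of Theorem~\ref{thm:holder-IGN-1} essentially verbatim, since \MBIGN~is structurally identical to \IGN~except that each ``round'' now processes a block $\fS_{i_t}$ of at most $k$ components rather than a single component, and the number of distinct memory points is $m = \ceil{n/k}$ rather than $n$. First I would establish the mini-batch analogue of Lemma~\ref{le:lb-Ht-new-1}: writing $\mH^t = \sum_{i=1}^m \sum_{j\in\fS_i}\vg_j(\vz_i^t)\vg_j(\vz_i^t)^\top$, a telescoping argument together with Lemma~\ref{le:L-JJ} (applied once per component $j$) gives $\sigma_{\min}(\mH^t)\geq \mu^2 - 2L_f\fH_\nu\sum_{i=1}^m |\fS_i|\,\Norm{\vz_i^t-\vx^*}^\nu \geq \mu^2 - 2kL_f\fH_\nu\sum_{i=1}^m\Norm{\vz_i^t-\vx^*}^\nu$. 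This is where the factor $k\ceil{n/k}$ in the initialization radius comes from: it ensures the right-hand side stays above $\mu^2/2$ as long as every $\vz_i^t$ lies within the stated ball.

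Next I would run the same induction as in the proof of Theorem~\ref{thm:holder-IGN-1}. The inductive hypothesis at step $t$ asserts (i) $\Norm{\vz_i^t - \vx^*}\leq r_{\tau(i,t)}$ for the appropriate ``last update time'' $\tau(i,t)$, all bounded by the initial radius; (ii) $\mH^t\succeq(\mu^2/2)\mI$, hence $\mG^t=(\mH^t)^{-1}$ is well-defined with $\Norm{\mG^t}\leq 2/\mu^2$; and (iii) the Woodbury correction matrix $\mI + (\mV^t)^\top\mG^t\mU^t$ is nonsingular, which follows because $\mH^{t+1} = \mH^t + \mU^t(\mV^t)^\top$ is itself positive-definite by the same spectral bound (the new points $\vx^{t+1}$ replacing old $\vz_{i_t}^t$ do not spoil the count), and the Sherman--Morrison--Woodbury identity is valid precisely when the updated matrix is invertible. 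For the per-step error contraction one bounds $\Norm{\vx^{t+1}-\vx^*}$ using $\vx^{t+1}=\mG^t\vu^t = (\mH^t)^{-1}\sum_{i=1}^m\sum_{j\in\fS_i}(\vg_j(\vz_i^t)^\top\vz_i^t - f_j(\vz_i^t))\vg_j(\vz_i^t)$, subtracting the identity $\mH^t\vx^* = \sum_i\sum_{j\in\fS_i}(\vg_j(\vz_i^t)^\top\vx^* - (f_j(\vz_i^t) + \vg_j(\vz_i^t)^\top(\vx^*-\vz_i^t)) + (f_j(\vz_i^t)+\vg_j(\vz_i^t)^\top(\vx^*-\vz_i^t)))\vg_j(\vz_i^t)$ and using $f_j(\vx^*)=0$ together with the H\"older Taylor remainder bound $|f_j(\vz_i^t) + \vg_j(\vz_i^t)^\top(\vx^*-\vz_i^t)| \leq \tfrac{\fH_\nu}{1+\nu}\Norm{\vz_i^t-\vx^*}^{1+\nu}$ (which follows from Assumption~\ref{asm:holder-g}). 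This yields $\Norm{\vx^{t+1}-\vx^*}\leq \frac{2\fH_\nu}{\mu^2(1+\nu)}\sum_{i=1}^m L_{f,i}\Norm{\vz_i^t-\vx^*}^{1+\nu}$ for suitable per-block Lipschitz-type constants summing appropriately, and one then defines $r_{t+1}$ as exactly this bound evaluated at $r_{\tau(i,t)}$.

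The remaining work is the recursive bookkeeping that turns the one-step bound into the stated $(1+\nu)$-th-power superlinear rate. Here the key observation — identical to the single-component case but with $n$ replaced by $\ceil{n/k}$ — is that after a full cycle of $m=\ceil{n/k}$ iterations every memory point has been refreshed to a recent iterate, so within each window $[sm, (s+1)m)$ the sequence $r_t$ is non-increasing and, comparing the end of window $s$ to the end of window $s-1$, one gains a full power of $1+\nu$. Unrolling gives $r_{t+1}\leq c^{(1+\nu)^{\floor{t/\ceil{n/k}}-1}} r_t$ with the constant $c = 1 - \frac{1}{\ceil{n/k}}(1-(\tfrac{1}{2(1+\nu)})^{1+\nu})$, where the $1/\ceil{n/k}$ reflects that only one of the $m$ blocks is updated per step and the parenthesized factor comes from combining the $2/(\mu^2(1+\nu))$ prefactor with the radius constraint so that the base of the contraction is genuinely below $1$.

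The main obstacle I anticipate is not any single estimate but getting the constants in the initialization radius and in $c$ to line up exactly with the $n \mapsto k\ceil{n/k}$ and $n\mapsto \ceil{n/k}$ substitutions: one must be careful that the spectral lower bound on $\mH^t$ uses the \emph{sum over blocks} $\sum_{i=1}^m\Norm{\vz_i^t-\vx^*}^\nu$ weighted by block sizes $|\fS_i|\leq k$, while the contraction exponent and the $1/\ceil{n/k}$ prefactor in $c$ use the \emph{number of blocks} $m=\ceil{n/k}$, and these two appearances of the batch structure must be tracked separately throughout the induction. Once the single-component proof is in hand, verifying that each inequality survives this substitution is routine but requires attention; I would present it by stating the mini-batch versions of Proposition~\ref{prop:b-JJ-sing}, Lemma~\ref{le:L-JJ} and Lemma~\ref{le:lb-Ht-new-1} and then indicating that the induction of Theorem~\ref{thm:holder-IGN-1} carries over mutatis mutandis.
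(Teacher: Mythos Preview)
Your proposal is correct and follows essentially the same approach as the paper: the paper establishes the mini-batch analogues of Lemmas~\ref{le:lb-Ht-new-1} and \ref{le:main-iter-1} (with the extra factor $k$ from $|\fS_i|\le k$), then reruns the induction of Theorem~\ref{thm:holder-IGN-1} with $n$ replaced by $m=\ceil{n/k}$, using the auxiliary sequence $\{a_t(m,\nu)\}$ of Definition~\ref{def:series} to formalize what you call the ``recursive bookkeeping'' and Lemmas~\ref{le:linear-con}--\ref{le:super-con} to extract the superlinear rate. Your anticipated obstacle about tracking the two different appearances of the batch structure ($k\ceil{n/k}$ in the radius versus $\ceil{n/k}$ in $c$) is exactly right and is the only place requiring care.
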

The terms of $n/k$ in the results of Theorem \ref{thm:Group-1} imply that increasing mini-batch size $k$ can speed up the convergence of $\MBIGN$. 
Additionally, the convergence of $\MBIGN$ matches $\IGN$ if we take $k=1$.

Similar to the discussion in Section \ref{sec:conv-an}, we have the following corollary for \MBIGN~method. 
\begin{cor}
\label{cor:ignk}
    Under settings of Theorem \ref{thm:Group-1}, we have
    \begin{align*}
        r_{t+1}\leq \Big(1-\frac{1}{2\ceil{n/k}}\Big)^{(1+\nu)^{\left(\floor{\frac{t}{\ceil{n/k}}}-1\right)}}r_t
    \end{align*}
    for all $t \geq \ceil{n/k}$. In the case of $\nu=1$, we have the $\ceil{n/k}$-step quadratic convergence
    \begin{align*}
        r_{t} \leq \frac{1}{4} r_{t-\ceil{n/k}}^2
    \end{align*}
    for all $t \geq \ceil{n/k}$.
\end{cor}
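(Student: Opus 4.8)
Write $m=\ceil{n/k}$ and $\kappa=4kL_f\fH_\nu m/\mu^2$, so that the hypothesis on $\vx^0$ reads $\kappa\Norm{\vx^0-\vx^*}^\nu\le 1$. The statement has two parts. The first (a per-step superlinear bound valid for every $\nu\in(0,1]$) I would deduce directly from Theorem~\ref{thm:Group-1} by estimating its contraction constant $c$, exactly as in the remark preceding Corollary~\ref{cor:ign1}. The second ($\nu=1$, an $m$-step quadratic bound) does not follow from the per-step form alone; for it I would reopen the inductive proof of Theorem~\ref{thm:Group-1} and read off the sharper per-cycle recursion satisfied by the majorant $\{r_t\}$, specialising $\nu=1$ at the end.

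For the first part, the constant in Theorem~\ref{thm:Group-1} is $c=1-\frac1m\big(1-(\tfrac{1}{2(1+\nu)})^{1+\nu}\big)$. Since $\nu\in(0,1]$ gives $1+\nu\in(1,2]$ and hence base $\tfrac{1}{2(1+\nu)}\in[\tfrac14,\tfrac12)\subset(0,1)$, and since $p\mapsto x^{p}$ is non-increasing for $p\ge1$ when $x\in(0,1)$, we get $(\tfrac{1}{2(1+\nu)})^{1+\nu}\le\tfrac{1}{2(1+\nu)}<\tfrac12$; this forces $c<1-\tfrac{1}{2m}$, while $c>1-\tfrac1m\ge0$ is immediate. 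For $t\ge m$ the exponent $(1+\nu)^{\floor{t/m}-1}\ge1>0$, so monotonicity of $x\mapsto x^{p}$ on $(0,1)$ yields $c^{(1+\nu)^{\floor{t/m}-1}}\le(1-\tfrac{1}{2m})^{(1+\nu)^{\floor{t/m}-1}}$, and plugging this into $r_{t+1}\le c^{(1+\nu)^{\floor{t/m}-1}}r_t$ proves the first claimed inequality.

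For the second part I would recall that the induction behind Theorem~\ref{thm:Group-1} controls $\Norm{\vx^{t+1}-\vx^*}$ through the componentwise linearisation errors at the points $\{\vz_a^t\}$: using $\mH^t\succeq(\mu^2/2)\mI$, $\Norm{\vg_j}\le L_f$, and the first-order Taylor remainder bound $|f_j(\vz)+\vg_j(\vz)^\top(\vx^*-\vz)|\le\tfrac{\fH_\nu}{1+\nu}\Norm{\vz-\vx^*}^{1+\nu}$ (from Assumption~\ref{asm:holder-g} together with $\vf(\vx^*)=\vzero$), one obtains $\Norm{\vx^{t+1}-\vx^*}\le\tfrac{2L_f\fH_\nu}{(1+\nu)\mu^2}\sum_{a=1}^{m}|\fS_a|\Norm{\vz_a^t-\vx^*}^{1+\nu}$. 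This is why the majorant is taken to be the rescaled error $r_t=\kappa^{1/\nu}\Norm{\vx^t-\vx^*}$, for which $\sum_a|\fS_a|=n\le km$ collapses the constant to $\tfrac{1}{2(1+\nu)}$, i.e.\ $r_{t+1}\le\tfrac{1}{2(1+\nu)}\big(\max_{t-m<s\le t}r_s\big)^{1+\nu}$ for $t\ge m$, the cyclic rule~(\ref{eq:update-2}) being used to express each $\vz_a^t$ as $\vx^{s}$ with $s$ in the preceding length-$m$ window. The hypothesis $\kappa\Norm{\vx^0-\vx^*}^\nu\le1$ is precisely $r_0\le1$, which keeps the iterates in the basin and makes $\{r_t\}$ non-increasing, so that the window-maximum collapses to $r_{t-m}$; setting $\nu=1$ (whence $\tfrac{1}{2(1+\nu)}=\tfrac14$) and reindexing yields $r_t\le\tfrac14 r_{t-m}^2$ for $t\ge m$, with the base case $t=m$ handled by the first-cycle analysis already present in the proof of Theorem~\ref{thm:Group-1}.

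The routine part is the first inequality. The delicate step is the second: one must (i) extract from the induction the sharper per-cycle recursion rather than merely its telescoped per-step consequence; (ii) establish that $\{r_t\}$ decreases over each length-$m$ window, so that $\max_{t-m<s\le t}r_s=r_{t-m}$ — this is where the initialisation radius and the basin argument are genuinely used, and where the proof must be careful, since a crude ``bound everything by the largest term'' does not by itself deliver a decreasing envelope; and (iii) track the mini-batch weights $|\fS_a|$ so that $k$ and $m=\ceil{n/k}$ recombine into the clean constant $\tfrac{1}{2(1+\nu)}$, which equals $\tfrac14$ exactly at $\nu=1$.
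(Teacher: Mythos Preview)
Your treatment of the first inequality is correct and matches the paper: bound $c<1-\tfrac{1}{2m}$ by estimating $(\tfrac{1}{2(1+\nu)})^{1+\nu}<\tfrac12$, then use monotonicity of $x\mapsto x^p$ on $(0,1)$.

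The second part has a genuine gap. You take $r_t=\kappa^{1/\nu}\Norm{\vx^t-\vx^*}$, i.e.\ the rescaled \emph{actual} error, and then assert that $r_0\le1$ forces $\{r_t\}$ to be non-increasing, so that the window maximum collapses. Both claims are wrong. First, the $r_t$ in the corollary is the specific majorant constructed in Theorem~\ref{thm:Group-1} (see Theorem~\ref{thm:Group-1-2}): it is $r_t=a_t(m,\nu)\,r_0$ with $r_0=\max\{\Norm{\vx^0-\vx^*},1\}$, so in fact $r_0\ge1$, not $r_0\le1$. Second, for your sequence the monotonicity simply fails, and so does the conclusion. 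Take $\nu=1$, $m=2$, $r_0=1$; the recursion allows $r_1=0.01$ and then $r_2\le\tfrac18(r_1^2+r_0^2)\approx0.125$, say $r_2=0.125$. Now $r_3\le\tfrac18(r_1^2+r_2^2)\approx0.00197$, which can be attained, yet the claimed bound $r_3\le\tfrac14 r_1^2=2.5\times10^{-5}$ is violated by two orders of magnitude. The per-cycle quadratic bound is \emph{false} for the rescaled error; it is a property of the majorant, not of the trajectory.

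The paper avoids this by working entirely with the deterministic auxiliary sequence $a_t(m,\nu)$ (Definition~\ref{def:series}), which is proved non-increasing in Lemma~\ref{le:s-mono} and shown in Lemma~\ref{le:n-qua-con} to satisfy $a_t\le\tfrac{1}{2(1+\nu)}a_{t-m}^{1+\nu}$. The majorant is then $r_t=a_t r_0$, and the trick is that $r_0\ge1$ lets you drop the factor $r_0^{-\nu}$:
\[
r_t=a_t r_0\le\frac{1}{2(1+\nu)}a_{t-m}^{1+\nu}r_0=\frac{1}{2(1+\nu)r_0^{\nu}}r_{t-m}^{1+\nu}\le\frac{1}{2(1+\nu)}r_{t-m}^{1+\nu}.
\]
At $\nu=1$ this is exactly $r_t\le\tfrac14 r_{t-m}^2$. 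The point you identified as ``delicate'' --- collapsing the window maximum --- is handled not by arguing about the iterates but by the algebra of $\{a_t\}$, whose monotonicity is unconditional.
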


Specifically, Corollary \ref{cor:ignk} indicates that the \MBIGN~ method with $k=n$ has the quadratic convergence under the assumption of Lipschitz continuous Jacobian (Assumption \ref{asm:holder-g} with $\nu=1$), which matches the rate of vanilla Gauss--Newton method.

\section{Related Work}\label{sec:related_work}

\begin{table*}[t]
\caption{We compare the per-iteration computation complexity, memory cost, convergence rates and the assumption of Jacobin of proposed methods and baselines. The rightmost column means that the methods \texttt{GN}, \texttt{SNR},  \texttt{GN-BFGS}, \texttt{BFB} and \texttt{BBB} require to access all of the components $f_1,\dots,f_n$ at each iteration, while the other methods only require to access one or mini-batch of components.} \label{tab:algs}

\vskip -0.1cm

\resizebox{\linewidth}{!}{
\begin{threeparttable}
\footnotesize\setlength\tabcolsep{5.pt}
\begin{tabular}{ccccccc}
\toprule
Methods & Computation & Memory & Convergence & Jacobian & $f_i$ \\
\midrule

\texttt{GN} \cite{ben1966newton,nocedal1999numerical} & $\fO(nd^2 + d^3)$ & $\fO(nd + d^2)$ & quadratic & Lipschitz & {\XSolidBrush} \\[0.05cm]
\texttt{SNR} \cite{yuan2022sketched}\tnote{$\sharp$} & $\fO(n\tau^2 + \tau^3)$  & $\fO(\tau d)$ & sublinear 
& Lipschitz & {\XSolidBrush} \\[0.05cm]

\texttt{GN-BFGS} \cite{li1999globally}\tnote{$\ddag$} & $\fO(d^2)$ & $\fO(d^2)$ & asymptotic superlinear & H\"older & {\XSolidBrush} \\[0.05cm]
\texttt{BGB} \cite{liu2023block}\tnote{$\S$} & $\fO(\tilde kd^2)$ & $\fO(d^2)$ & $\fO\big((1 - {\tilde k}/d)^{t(t-1)/4}\big)$ & Lipschitz & {\XSolidBrush} \\[0.05cm]
\texttt{BBB} \cite{liu2023block}\tnote{$\S$} & $\fO(\tilde kd^2)$ & $\fO(d^2)$ & $\fO\big((1 - \tilde k/(\varkappa d))^{t(t-1)/4}\big)$ & Lipschitz & {\XSolidBrush} \\[0.05cm]
\texttt{EKF} \cite{bertsekas1996incremental} & $\fO(d^2)$ & $\fO(d^2)$ &  sublinear & Lipschitz & {\Checkmark} \\[0.05cm]
\texttt{EKF-S} \cite{moriyama2003incremental, gurbuzbalaban2015globally} & $\fO(d^2)$ & $\fO(d^2)$ & linear & Lipschitz & {\Checkmark} \\[0.05cm]
\texttt{IGN} (this work) & $\fO(d^2)$ & $\fO(nd+d^2)$ & $\fO\big((1 - 1/(2n))^{(1+\nu)^{\floor{t/n}}}\big)$ & H\"older & {\Checkmark} \\[0.05cm]
\texttt{MB-IGN} (this work) & $\fO(kd^2)$  & $\fO(nd+d^2)$ & $\fO\big((1 -  k/(2n))^{(1+\nu)^{\floor{{k t}/{n}}}}\big)$ & H\"older & {\Checkmark} \\ 
\bottomrule    
\end{tabular}
\begin{tablenotes}
{\scriptsize 
\item [{$\sharp$}] The \texttt{SNR} method requires the star convexity in their minimization formulation. The notation $\tau$ presents the sketch size. \\
\item [{$\ddag$}] The \texttt{GN-BFGS} method requires $n=d$ and the Jacobian is symmetric.\\
\item [{$\S$}] The \texttt{BGB} and \texttt{BBB} methods requires $n=d$. The notation $\tilde k$ is rank of the modification matrix and \mbox{$\varkappa\triangleq L_f/\mu$} is the condition number.} 
\end{tablenotes}  
\end{threeparttable}}	

\end{table*}

We compare the theoretical results of proposed \IGN~and \MBIGN~with existing methods in Table~\ref{tab:algs}.

The methods including Gauss--Newton-based BFGS (\texttt{GN-BFGS}) \cite{li1999globally}, Block Good Broyden's method (\texttt{BGB}) \cite{liu2023block}, Block Bad Broyden's method (\texttt{BBB}) \cite{liu2023block} and Sketched Newton--Raphson (\texttt{SNR}) \cite{yuan2022sketched} only focus on establishing the Jacobian estimator, while each of their iteration depends on accessing all components in the nonlinear vector function that is expensive for large-scale problems.
In addition, the quasi-Newton methods including \texttt{GN-BFGS}~\cite{li1999globally}, \texttt{BGB}~\cite{liu2023block} and \texttt{BBB}~\cite{liu2023block} only work for the scenario of~$n=d$.
The \texttt{SNR} method enjoys an efficient update for large $n$, while it lacks the local superlinear convergence like classical Newton-type methods.

The Extended Kalman Filter with Stepsize (\texttt{EKF-S}) \cite{moriyama2003incremental, gurbuzbalaban2015globally} is based on the incremental update that only accesses one (or mini-batch) of components and the corresponding gradient at each iteration.
Concretely, the \texttt{EKF-S} method performs the iteration
\begin{align*}
    \vx^{t+1} = \vx^t - \alpha^t (\tilde\mH^{t})^{-1}\vg_{i_t}(\vx^{t})f_{i_t}(\vx^t)
\end{align*}
with some stepsize $\alpha^t>0$, where $\tilde\mH^t\in\BR^{d\times d}$ is the estimator for the Gram matrix $\mJ(\vx^t)^\top\mJ(\vx^t)$ which is constructed by the recursion
\begin{align}\label{eq:update-ekf-H}
    \tilde\mH^{t+1} = \lambda^t\tilde\mH^{t} + \vg_{i_t}(\vx^{t+1})\vg_{i_t}(\vx^{t+1})^\top
\end{align}
for some $\lambda^t\in(0,1]$. 
The original Extended Kalman Filter method (\texttt{EKF}) \cite{bertsekas1996incremental} takes a fixed stepsize of~$\alpha^t=1$ in the above iteration and achieves a sublinear convergence rate.
Later, \citet{gurbuzbalaban2015globally} showed that introducing the adaptive stepsize can achieve the linear convergence rate.
Note that \texttt{EKF-S} and \texttt{EKF} will not explicitly reuse the information of vector $g_{i_t}(\vx_t)$ in later iterations. 
In other words, the recursion~(\ref{eq:update-ekf-H}) indicates all information of the historical gradient is heuristically compressed into the term of $\lambda^t\tilde\mH^t$.
In contrast, the proposed \IGN~method establishes the Gram matrix approximation~$\mH_t\approx\mJ(\vx^t)^\top\mJ(\vx^t)$ by equations~(\ref{eq:note-1}) and~(\ref{eq:recursion-1}), 
which clearly corresponds to the linear approximation (\ref{eq:approx-fi})-(\ref{eq:prob-ign}) by reusing all of the historical gradients $\{\vg_{i}(\vz_i^t)\}_{i=1}^n$.
This strategy encourages a more accurate Gram matrix estimation in our method and leads to a superlinear convergence rate. 

The incremental Newton-type methods have also been studied in finite-sum strongly convex optimization~\cite{rodomanovnewton, rodomanov2016superlinearly,mokhtari2018iqn,lahoti2023sharpened,liu2024incremental}.
In the view of our formulation (\ref{eq:ori_prob}), this work considers solving the system of nonlinear equations of the form $\vf(\vx)=\vzero$, where $\vf:\BR^d\to\BR^d$ is the gradient of some objective function and has the finite-sum structure $\vf(\vx)\triangleq(1/N)\sum_{i=1}^N\vf_i(\vx)$ with symmetric positive-definite Jacobian.
These methods can achieve superlinear convergence rates by accessing one of~$\{\vf_i\}_{i=1}^N$ and its Jacobian at each iteration.
However, their iterations have to maintain Jacobians for all of the individuals $\{\vf_i\}_{i=1}^N$ with a memory cost of $\fO(Nd^2)$, which is prohibitive for a large $N$.

\section{Experiments}\label{sec:experiments}

We conduct numerical experiments on the following applications:
\begin{itemize}[leftmargin=0.5cm,topsep=-0.03cm,itemsep=-0.1cm]
\item Regularized Logistic Regression: We consider training the binary classifier~$\vx\in\BR^d$ by solving the nonconvex regularized logistic regression problem  \cite{antoniadis2011penalized,kohler2017sub}
\begin{align*}
    \min_{\vx\in\BR^d} \ell(\vx) \triangleq \frac{1}{N}\sum_{j=1}^N\log(1 + \exp(-b_j\va_j^\top \vx)) + \theta\sum_{k=1}^d \frac{\nu x_k^2}{1+\nu x_k^2},
\end{align*}
where $\{(\va_j, b_j)\}_{j=1}^{N}$ is the training set such that $\va_j\in\BR^d$ and $b_j\in\{-1, 1\}$ for all $j\in[N]$. 
We set $\theta=10^{-2}$ and $\nu=1$ for the model. 
We formulated the above minimization problem by the formulation of nonlinear equations (\ref{eq:ori_prob}) with $ \vf(\vx)\triangleq \nabla \ell(\vx)$.
We perform the experiments on dataset ``DBWorld'' ($N=64$ and $d=4,702$) \cite{misc_dbworld_e-mails_219} for this problem.
\item Chandrasekhar's H-Equation: 
We consider the Chandrasekhar's H-equation, which is widely used in analytical radiative transfer theory~\cite{hottel1967radiative,chandrasekhar1960radiative}. 
It can be formulated by problem (\ref{eq:ori_prob}) with
\begin{align*}
    f_i(\vx) = x_i - \left(1 - \frac{c}{2n}\sum_{j=1}^n\frac{\mu_i x_j}{\mu_i + \mu_j}\right)^{-1} 
    ~\text{for all}~i\in[n],
    \quad\text{where}\quad
    \mu_i = \frac{i-1/2}{n}.
\end{align*}
We set $d=2,000$ and $c=1-10^{-5}$ for this problem in our experiments. .
\item Soft Maximum Minimization: We consider the soft maximum minimization problem~\cite{nesterov2005smooth,bullins2020highly}
\begin{align}
\label{eq:soft-func}
    \min_{\vx\in\BR^d} h(\vx) \triangleq \mu \ln{\left(\sum_{i=1}^N\exp{\left(\frac{\inner{\va_i}{\vx} - b_i}{\mu}\right)}\right)} + \frac{\lambda}{2}\Norm{x}^2,
\end{align}
which can be formulated by problem (\ref{eq:ori_prob}) with $\vf(\vx)\triangleq\nabla h(\vx)$. 
We follow the setting of \cite{doikov2023second,doikov2024super} by generating the entries of $\va_1, \cdots, \va_N\in\BR^d$ and $\vb\in\BR^N$ randomly and independently from the uniform distribution on $[-1, 1]$. 
We set $N=2000$, $d=2000$, $\mu=5$ and $\lambda=2$ in our experiments.
\end{itemize}

We first investigate the impact of mini-batch size $k$ of \MBIGN~method (Algorithm~\ref{alg:MB-IGN}) on the performance. We run \MBIGN~by taking the different mini-batch sizes on the three problems and present the empirical results for time~(s) against $||\vf(\vx)||$ in Figure \ref{fig:k-compare}, where the setting $k=1$ corresponds to our \IGN~method (Algorithm \ref{alg:IGN-1}).
We can observe that the mini-batch update is effective in reducing the time cost.
The mini-batch sizes of $500$, $200$, and $100$ achieve the best performance on the problems of robust logistic regression, Chandrasekhar’s H-equation, and soft maximum minimization, respectively.

We then compare the proposed methods \MBIGN~(Algorithm \ref{alg:MB-IGN}) with baseline methods \texttt{SNR} \cite{yuan2022sketched}, \texttt{EKF-S} \cite{bertsekas1996incremental, moriyama2003incremental}, \texttt{BGB} \cite{liu2023block} and \texttt{BBB} \cite{liu2023block}. 
We present the empirical results for the number of epochs against $||\vf(\vx)||$ in Figure~\ref{fig:interactions}, where one epoch means one complete pass of all components of the nonlinear vector function.
We can obverse that the proposed \MBIGN~and the baseline method \texttt{BGB} outperforms others on all problems.
This is reasonable since only these two methods enjoy the explicit condition-number-free superlinear convergence rates (see Table \ref{tab:algs}). 
The superlinear convergence rate of \texttt{BBB} method depends on the condition number, which leads to its performance not always better than the linear convergent method $\texttt{EKF-S}$.

We also present the empirical results for the cost of time (second) against $||\vf(\vx)||$ in Figure \ref{fig:t-compare}.
We can obverse that the proposed \MBIGN~always performs significantly better than all baseline methods. 
This is in line with our expectations because only our \MBIGN~method enjoys both the superlinear convergence rate and the cheap iteration cost.
Although the \texttt{BGB} method has a comparable number of epochs to our~\MBIGN~on the problem of solving Chandrasekhar's H-Equation, the iteration with accessing all components makes its time cost expensive.

\begin{figure}[t]
    \centering
    \begin{subfigure}[c]{0.32\textwidth}
        \includegraphics[width=\textwidth]{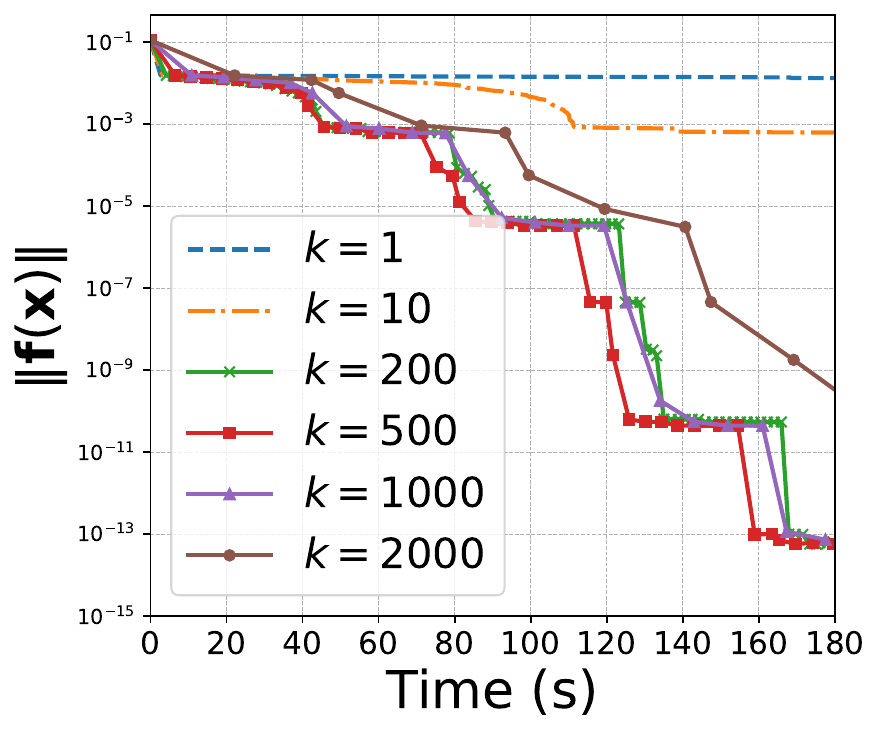}
        \caption{Robust Logistic Regression}
    \end{subfigure}
    \hfill
    \begin{subfigure}[c]{0.32\textwidth}
        \includegraphics[width=\textwidth]{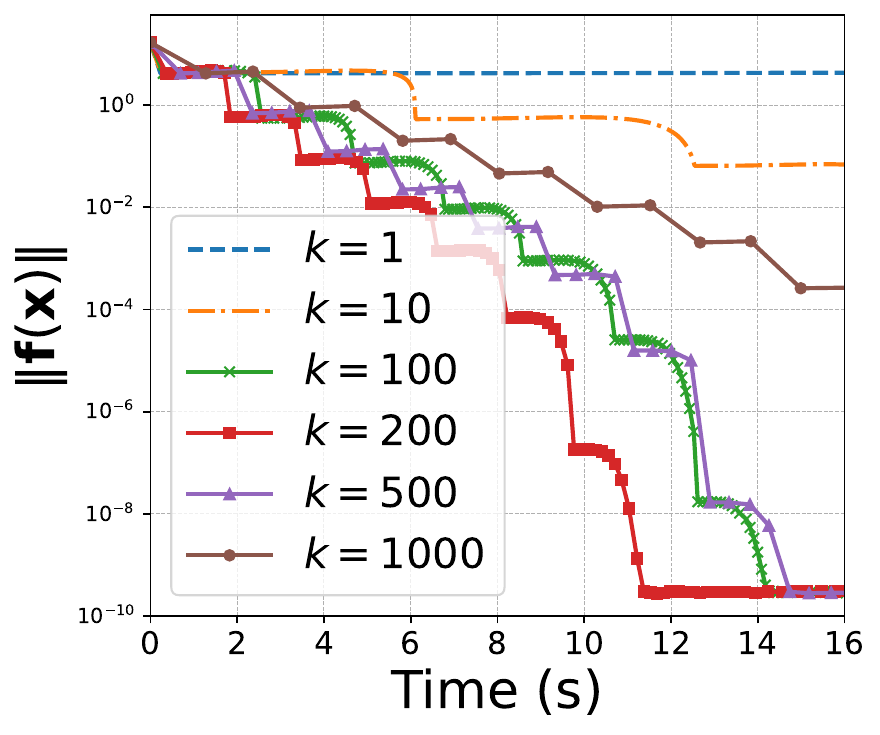}
        \caption{Chandrasekhar's H-Equation}
    \end{subfigure}
    \hfill
    \begin{subfigure}[c]{0.32\textwidth}
        \includegraphics[width=\textwidth]{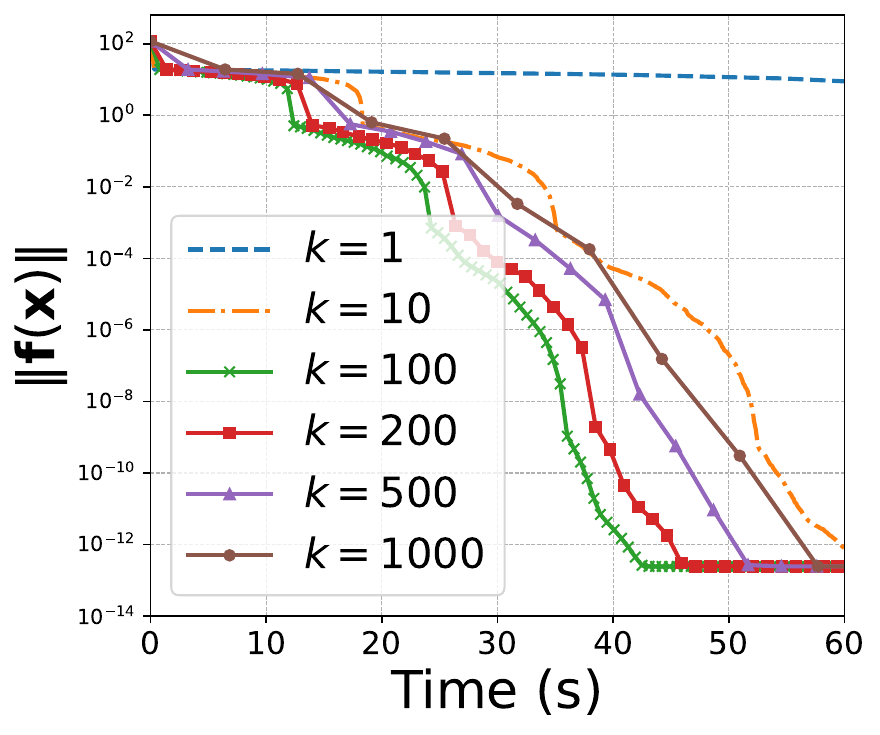}
        \caption{Soft Maximum Minimization}
    \end{subfigure}
    \caption{Experimental results of time (s) vs. $\|\vf(\vx)\|$ for \MBIGN~with different mini-batch size $k$.}
    \label{fig:k-compare}
\end{figure}

\begin{figure}[H]
    \centering
    \begin{subfigure}[c]{0.32\textwidth}
        \includegraphics[width=\textwidth]{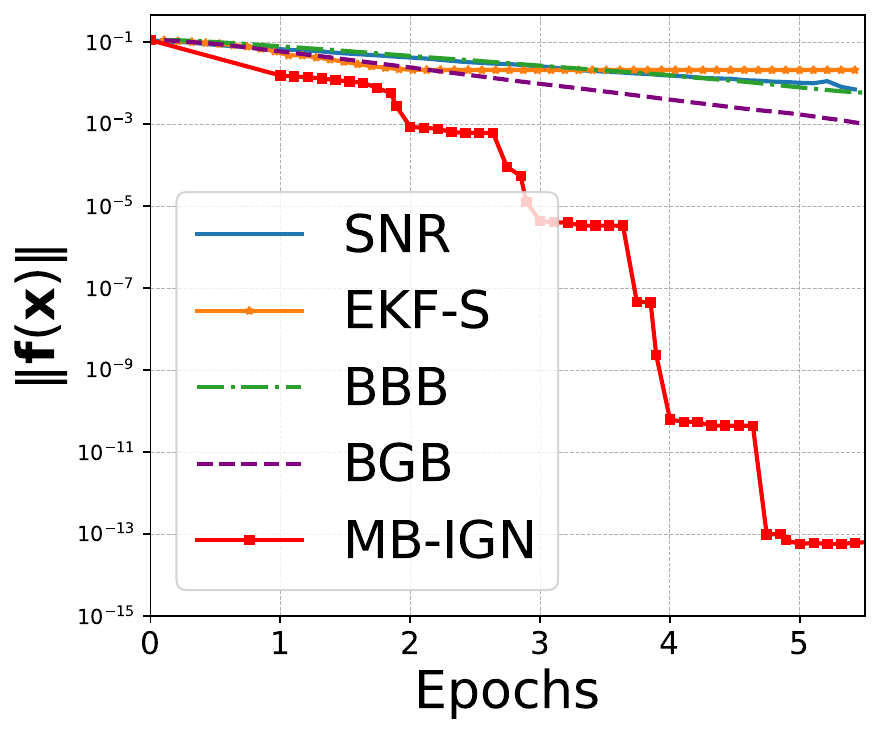}
        \caption{Robust Logistic Regression}
    \end{subfigure}
    \hfill
    \begin{subfigure}[c]{0.32\textwidth}
        \includegraphics[width=\textwidth]{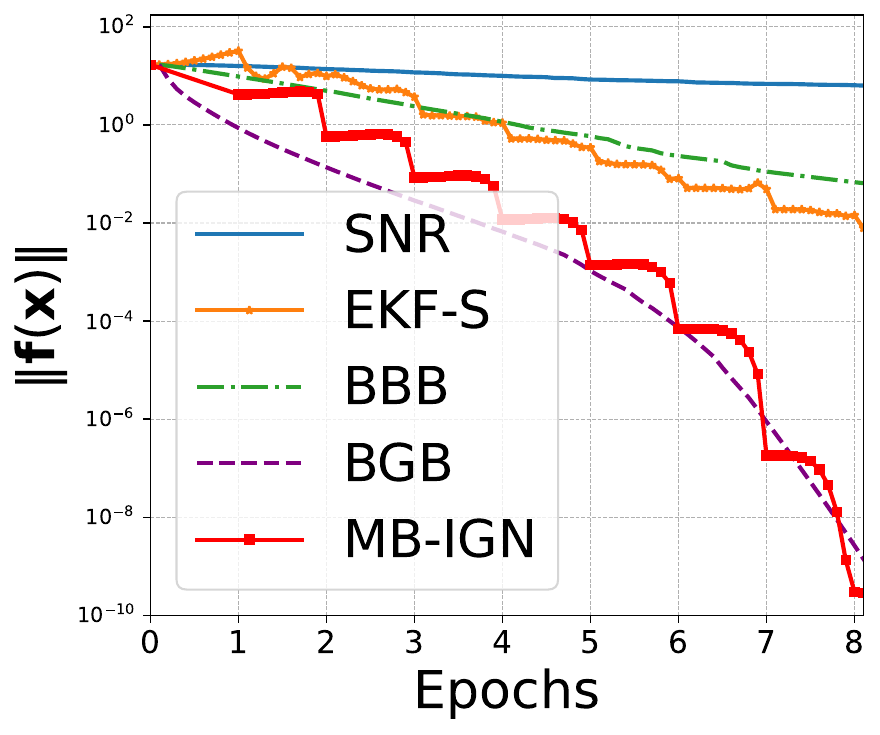}
        \caption{Chandrasekhar's H-Equation}
    \end{subfigure}
    \hfill
    \begin{subfigure}[c]{0.32\textwidth}
        \includegraphics[width=\textwidth]{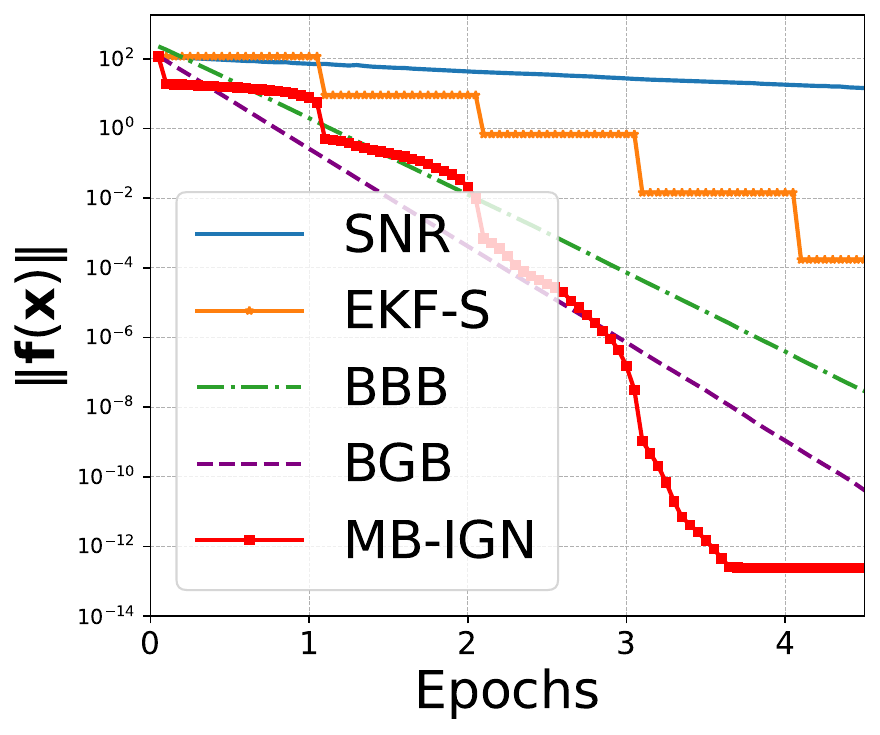}
        \caption{Soft Maximum Minimization}
    \end{subfigure}
    \caption{Experimental results of epochs vs. $\|\vf(\vx)\|$ for all methods.}
    \label{fig:interactions}
\end{figure}

\begin{figure}[H]
    \centering
    \begin{subfigure}[c]{0.32\textwidth}
        \includegraphics[width=\textwidth]{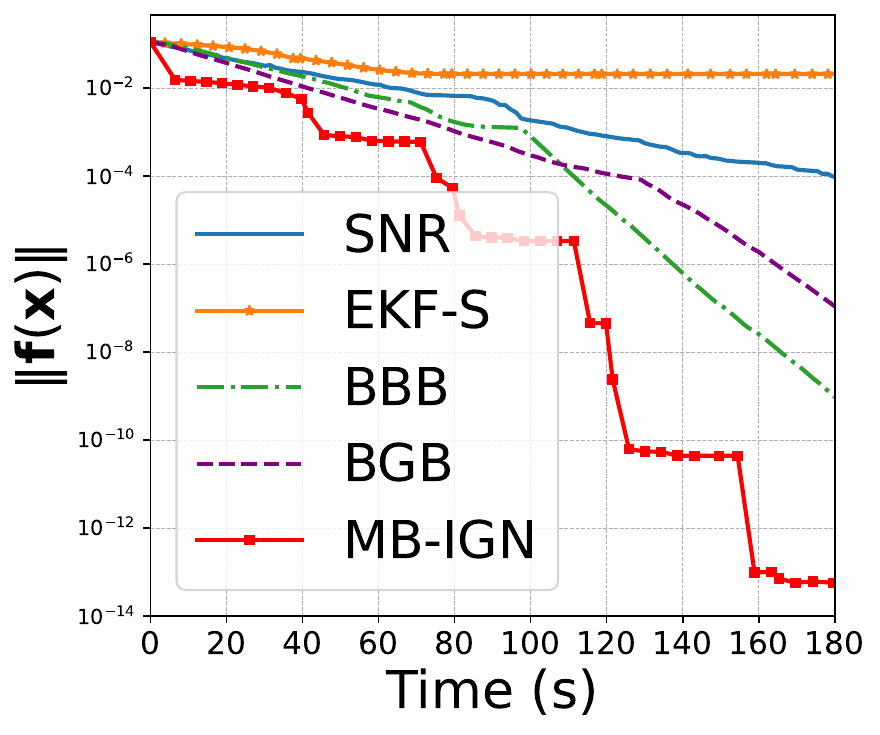}
        \caption{Robust Logistic Regression}

    \end{subfigure}
    \hfill
    \begin{subfigure}[c]{0.32\textwidth}
        \includegraphics[width=\textwidth]{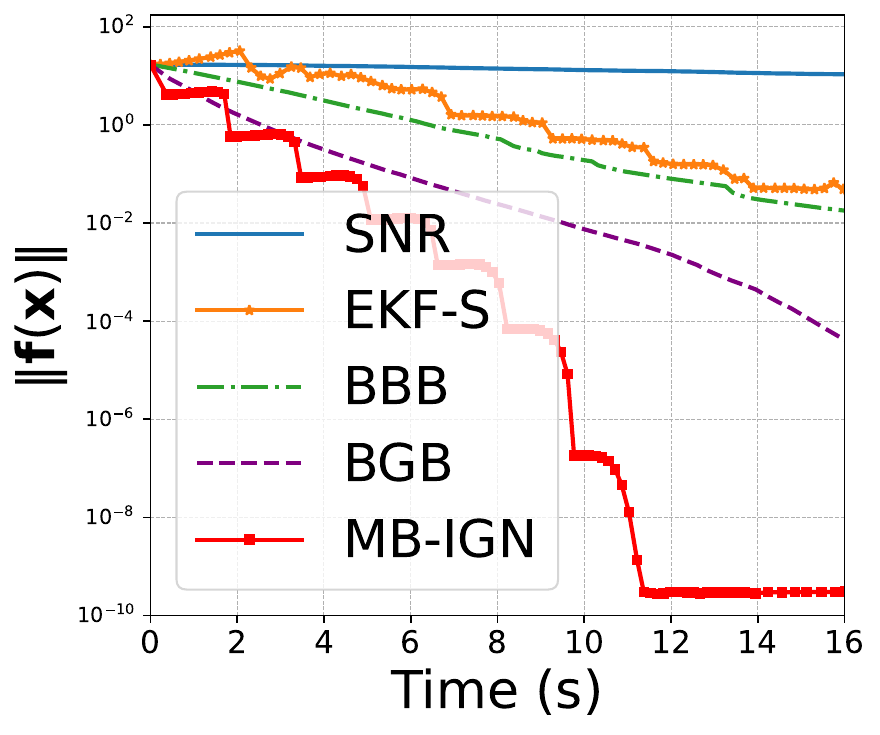}
        \caption{Chandrasekhar's H-Equation}
    \end{subfigure}
    \hfill
    \begin{subfigure}[c]{0.32\textwidth}
        \includegraphics[width=\textwidth]{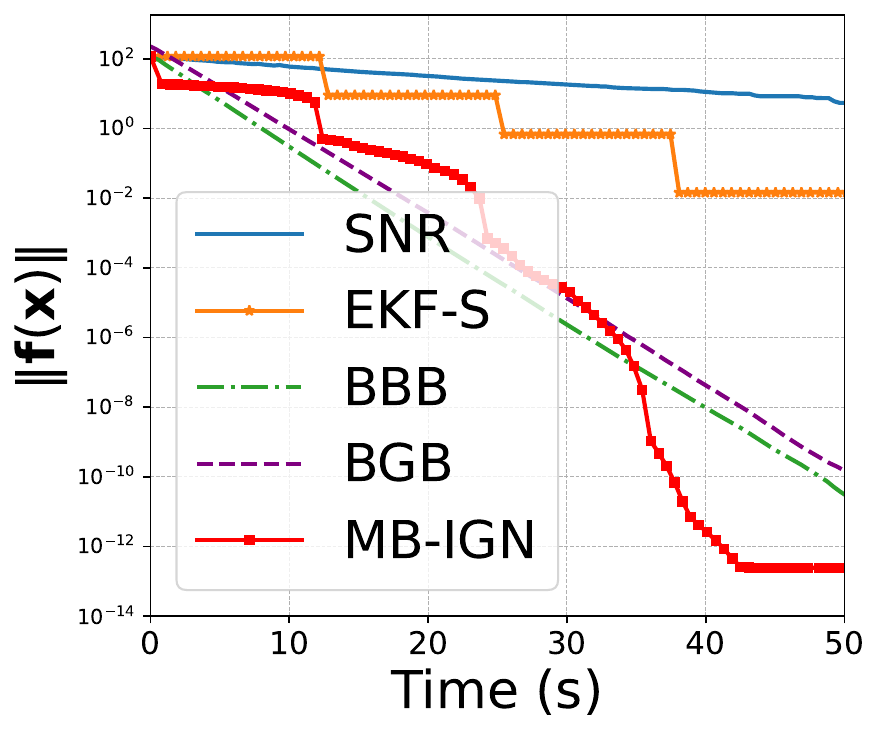}
        \caption{Soft Maximum Minimization}
    \end{subfigure}
    \caption{Experimental results of time (s) vs. $\|\vf(\vx)\|$ for all methods.}
    \label{fig:t-compare}
\end{figure}

\section{Conclusion}\label{sec:conclusion}
In this work, we propose the incremental Gauss--Newton method (\texttt{IGN}) for solving the system of nonlinear equations.  
We design the algorithm by tracking the historical gradient of all components to establish the estimator of the Gram matrix (its inverse).
The theoretical analysis shows \texttt{IGN} enjoys the explicit superlinear convergence rate under the assumption of H\"older continuous Jacobian.
We also provide a mini-batch extension of our \texttt{IGN} method (\MBIGN) and show it has an even faster superlinear convergence rate.
The numerical experiments on the applications of regularized logistic regression, Chandrasekhar's H-equation, and soft maximum minimization validate the advantage of the proposed methods over existing baselines.

In the future, it will be interesting to study the incremental Gauss--Newton method to solve nonlinear equations in the distributed setting. 
It is also possible to design incremental quasi-Newton methods for solving the general nonlinear equations.

\bibliographystyle{plainnat}
\bibliography{reference}

\newpage
\appendix

\setlength{\parindent}{0pt}

{\LARGE \textbf{Appendix}\par}

~\\

The appendix is organized as follows.
In Section \ref{sec:alg-k}, we provide the detailed procedure of Mini-Batch Incremental Gauss–Newton Method (\texttt{MB-IGN}). 
In Section \ref{sec:basic-results}, we provide some results for Jacobians
In Section~\ref{sec:series}, we introduces an auxiliary sequence and analyze its properties.
In Sections \ref{sec:con-ign} and \ref{sec:con-mbign}, we provide the convergence analysis for proposed \texttt{IGN} and \texttt{MB-IGN}, respectively.

\section{The Mini-Batch Incremental Gauss–Newton Method}
\label{sec:alg-k}

We provide the detailed procedure of Mini-Batch Incremental Gauss–Newton Method (\texttt{MB-IGN}) in Algorithm \ref{alg:MB-IGN}.

\begin{algorithm}[ht!]
\caption{Mini-Batch Incremental Gauss–Newton Method (\texttt{MB-IGN})}
\label{alg:MB-IGN}
\begin{algorithmic}[1]
\State \textbf{Input:} $\vx^0\in\BR^d$, $\vu^0\in\BR^d$, $\mH^0,\mG^0\in\BR^{d\times d}$, $k\leq n$, $m=\ceil{n/k}$  \vskip0.1cm

\State Partition the index set $[n]=\{1,\dots,n\}$ into subsets $\{\fS_1,\dots,\fS_m\}$ such that \vskip0.12cm
$|\fS_1|=\dots=|\fS_{m-1}|= k$,~~
$\cup_{i=1}^m\fS_i=[n]$~~and~~
$\fS_i\cap\fS_j=\emptyset$~~for all~$i,j\in[k]$ \vskip0.12cm
\State \textbf{for} $t=0,1,\dots$ \vskip0.12cm
\State \quad  $\vx^{t+1}=\mG^t\vu^t$ \vskip0.12cm
\State \quad  $i_t =t\%m + 1$ \vskip0.12cm

\State \quad 
$\mU^t = \Big[- \vg_{j_1}(\vz_{i_t}^t),~~ 
    \vg_{j_1}(\vx^{t+1}),~\cdots~,~
    - \vg_{j_{|\fS_{i_t}|}}(\vz_{i_t}^t),~~
    \vg_{j_{|\fS_{i_t}|}}(\vx^{t+1})
\Big]$ \vskip0.12cm
\State \quad $\mV^t = \Big[\vg_{j_1}(\vz_{i_t}^t),~~ 
    \vg_{j_1}(\vx^{t+1}),~\cdots~,~
    \vg_{j_{|\fS_{i_t}|}}(\vz_{i_t}^t),~~
    \vg_{j_{|\fS_{i_t}|}}(\vx^{t+1})
\Big]$ \vskip0.12cm

 \State \quad $\displaystyle{\vu^{t+1} \!=\! \vu^t \!-\!\! \sum_{j\in\fS_{i_t}}\!\!\left(\vg_{j}(\vz_{i_t}^t)^\top\vz_{i_t}^{t} \!-\! f_{j}(\vz_{i_t}^t)\right)\vg_{j}(\vz_{i_t}^t) 
 \!+\!\!\sum_{j\in\fS_{i_t}}\!\!\left(\vg_{j}(\vx^{t+1})^\top\vx^{t+1} \!-\! f_{j}(\vx^{t+1})\right)\vg_{j}(\vx^{t+1})}$\vskip0.12cm
\State \quad $\displaystyle{\mH^{t+1} \!=\! \mH^t - \sum_{j\in\fS_{i_t}}\vg_{j}(\vz_{i_t}^t)\vg_{j}(\vz_{i_t}^t)^\top+ \sum_{j\in\fS_{i_t}}\vg_{j}(\vx^{t+1})\vg_{j}(\vx^{t+1})^\top}$ \vskip0.12cm
\State\label{line:update-G-k} \quad  $\mG^{t+1} =\mG^{t} - \mG^{t}\mU^t(\mI + (\mV^t)^\top\mG^{t}\mU^t)^{-1}(\mV^t)^\top\mG^{t}$ \vskip0.12cm

\State \quad  $\vz_i^{t+1} = \begin{cases}
        \vx^{t+1}, & \text{if~} i=i_t \\
        \vz_i^t, & \text{otherwise}
    \end{cases}$ \vskip0.12cm
\State \textbf{end for}
\end{algorithmic}
\end{algorithm}

\section{Some Basic Results for Jacobians}
\label{sec:basic-results}

This section presents some useful results for our later analysis.

\begin{lem}
    \label{le:H-con}
    \textnormal{(H\"older continuity of each gradient)} Under Assumption \ref{asm:holder-g}, it satisfies that
    \begin{align}
        \Norm{\vg_i(\vy)-\vg_i(\vx)}\leq \fH_\nu\Norm{\vy-\vx}^\nu,
    \end{align}
    for any $\vx, \vy\in\BR^d$, and $i\in[n]$.
\end{lem}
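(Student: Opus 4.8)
The plan is to express each component gradient as a fixed linear functional applied to the Jacobian, and then push the Jacobian's H\"older continuity (Assumption~\ref{asm:holder-g}) through that linear map. Concretely, recall from the notation in Section~\ref{sec:preliminaries} that $\mJ(\vx)=[\vg_1(\vx),\dots,\vg_n(\vx)]^\top\in\BR^{n\times d}$, so the $i$-th gradient is exactly the $i$-th row of $\mJ(\vx)$ written as a column, i.e.
\begin{align*}
\vg_i(\vx) = \mJ(\vx)^\top\ve_i,
\end{align*}
where $\ve_i\in\BR^n$ is the $i$-th standard basis vector.

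With this identity, for any $\vx,\vy\in\BR^d$ and any $i\in[n]$ I would write
\begin{align*}
\Norm{\vg_i(\vy)-\vg_i(\vx)}
= \Norm{\big(\mJ(\vy)-\mJ(\vx)\big)^\top\ve_i}
\leq \Norm{\big(\mJ(\vy)-\mJ(\vx)\big)^\top}\,\Norm{\ve_i}
= \Norm{\mJ(\vy)-\mJ(\vx)},
\end{align*}
using the submultiplicativity of the spectral norm, the fact that the spectral norm is invariant under transposition, and $\Norm{\ve_i}=1$. Applying Assumption~\ref{asm:holder-g} to the last quantity gives $\Norm{\mJ(\vy)-\mJ(\vx)}\leq\fH_\nu\Norm{\vy-\vx}^\nu$, which is the claimed bound.

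There is no real obstacle here: the statement is an immediate consequence of Assumption~\ref{asm:holder-g} once one observes that restricting a matrix inequality in spectral norm to a single row (equivalently, applying it to a unit coordinate vector) can only decrease the norm. The only point worth stating explicitly is the row-extraction identity $\vg_i(\vx)=\mJ(\vx)^\top\ve_i$ and the norm facts used above, so the proof is essentially a two-line computation.
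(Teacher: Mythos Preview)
Your proposal is correct and matches the paper's own proof essentially line for line: the paper also uses the identity $\vg_i(\vy)-\vg_i(\vx)=(\mJ(\vy)-\mJ(\vx))^\top\ve_i$, bounds by $\Norm{(\mJ(\vy)-\mJ(\vx))^\top}\Norm{\ve_i}=\Norm{\mJ(\vy)-\mJ(\vx)}$, and applies Assumption~\ref{asm:holder-g}.
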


\begin{proof}
    We denote
    \begin{align*}
    \Tilde{\mJ} = \begin{bmatrix}
        (\vg_1(\vy)-\vg_1(\vx))^\top \\
        \vdots \\
        (\vg_n(\vy)-\vg_n(\vx))^\top 
    \end{bmatrix}\in\BR^{n\times d} \qquad\text{with}\qquad \vg_i(\vx)=\nabla f_i(\vx)
    \end{align*}
    and let $\ve_i\in\BR^{n}$ be the $i$-th standard basic vector in $n$-dimensional Euclidean space. 
    Then the facts~$\Tilde{\mJ}=\mJ(\vy)-\mJ(\vx)$ and $\Tilde{\mJ}^\top \ve_i=\vg_i(\vy)-\vg_i(\vx)$ imply we have
    \begin{align*}
    \Norm{\vg_i(\vy)-\vg_i(\vx)}
    \leq \|\Tilde{\mJ}^\top\|\Norm{\ve_i} = 
    \|\Tilde{\mJ}\|=\Norm{\mJ(\vy)-\mJ(\vx)}\leq \fH_\nu\Norm{\vy-\vx}^\nu,
    \end{align*}
    where the last step is based on the H\"older continuouity of $\mJ(\cdot)$.
\end{proof}

\begin{lem}
\label{le:power-bound}
\textnormal{(Bound for H\"older-continuous function)} Under Assumption \ref{asm:holder-g}, we have
    \begin{align}
        f_i(\vy) - f_i(\vx) - \vg_i(\vx)^\top(\vy-\vx) \leq \frac{\fH_\nu}{1 + \nu}\Norm{\vy-\vx}^{1+\nu},
    \end{align}
    for any $\vx, \vy\in\BR^d$ and  $i\in[n]$.
\end{lem}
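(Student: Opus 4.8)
The plan is to reduce the claimed inequality to the fundamental theorem of calculus together with the $\nu$-H\"older continuity of each gradient $\vg_i$, which is exactly Lemma \ref{le:H-con}. First I would write the difference $f_i(\vy) - f_i(\vx)$ as an integral of the directional derivative along the segment joining $\vx$ and $\vy$, namely $f_i(\vy)-f_i(\vx) = \int_0^1 \vg_i(\vx + s(\vy-\vx))^\top(\vy-\vx)\,ds$. Subtracting the first-order term $\vg_i(\vx)^\top(\vy-\vx) = \int_0^1 \vg_i(\vx)^\top(\vy-\vx)\,ds$ then gives
\begin{align*}
f_i(\vy) - f_i(\vx) - \vg_i(\vx)^\top(\vy-\vx) = \int_0^1 \big(\vg_i(\vx+s(\vy-\vx)) - \vg_i(\vx)\big)^\top(\vy-\vx)\,ds.
\end{align*}

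Next I would bound the integrand: by Cauchy--Schwarz it is at most $\Norm{\vg_i(\vx+s(\vy-\vx))-\vg_i(\vx)}\cdot\Norm{\vy-\vx}$, and applying Lemma \ref{le:H-con} with the two points $\vx+s(\vy-\vx)$ and $\vx$ gives the factor $\fH_\nu\Norm{s(\vy-\vx)}^\nu = \fH_\nu s^\nu\Norm{\vy-\vx}^\nu$. Hence the integrand is at most $\fH_\nu s^\nu\Norm{\vy-\vx}^{1+\nu}$. Integrating in $s$ over $[0,1]$ and using $\int_0^1 s^\nu\,ds = 1/(1+\nu)$ yields exactly the claimed bound $\fH_\nu\Norm{\vy-\vx}^{1+\nu}/(1+\nu)$. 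Note that the left-hand side of the statement is not wrapped in a norm or absolute value, so this one-sided estimate is all that is needed; the same argument with absolute values would give the two-sided version if desired.

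I do not anticipate a genuine obstacle here; this is the standard "descent-lemma" computation adapted from Lipschitz to H\"older continuity. The only minor points to be careful about are: (i) ensuring differentiability of $f_i$ along the segment so that the integral representation is valid (this is implicit in the standing smoothness assumptions, since $\vg_i = \nabla f_i$ is continuous by Assumption \ref{asm:holder-g}, so $s\mapsto f_i(\vx+s(\vy-\vx))$ is $C^1$); and (ii) correctly tracking the $s^\nu$ factor coming from the H\"older exponent so that the constant is $1/(1+\nu)$ rather than, say, $1$. Everything else is routine, and the proof is only a few lines.
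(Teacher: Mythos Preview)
Your proposal is correct and essentially identical to the paper's proof: both write $f_i(\vy)-f_i(\vx)$ as $\int_0^1 \vg_i(\vx+s(\vy-\vx))^\top(\vy-\vx)\,ds$, subtract the linear term, apply Cauchy--Schwarz, invoke Lemma~\ref{le:H-con} to get the $\fH_\nu s^\nu\Norm{\vy-\vx}^{1+\nu}$ bound on the integrand, and integrate $s^\nu$ over $[0,1]$ to obtain the $1/(1+\nu)$ constant.
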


\begin{proof}
    Following the proof of \cite{grapiglia2017regularized, grapiglia2019accelerated}, we have
    \begin{align*}
        f_i(\vy) - f_i(\vx) - \vg_i(\vx)^\top(\vy - \vx)
        & = \int_{t=0}^1 \vg_i(\vx + t(\vy - \vx))^\top(\vy - \vx)\text{d}t - \vg_i(\vx)^\top(\vy - \vx)\\
        & = \int_{t=0}^1 \left(\vg_i(\vx + t(\vy - \vx)) - \vg_i(\vx)\right)^\top(\vy - \vx)\text{d}t\\
        & \leq \int_{t=0}^1 \Norm{\vg_i(\vx + t(\vy - \vx)) - \vg_i(\vx)}\Norm{\vy - \vx}\text{d}t\\
        & \leq \int_{t=0}^1\fH_\nu t^\nu\Norm{\vy - \vx}^{1+\nu}\text{d}t\\
        & = \fH_\nu \Norm{\vy - \vx}^{1+\nu}\int_{t=0}^1 t^\nu\text{d}t\\
        & = \frac{\fH_\nu}{1 + \nu} \Norm{\vy - \vx}^{1+\nu},
    \end{align*}
    where the first inequality comes from Cauchy-Schwarz inequality, and the second one comes from Lemma \ref{le:H-con} that each gradient is H\"older continuous.
\end{proof}

\begin{lem}
\label{le:b-grad}
    \textnormal{(Bound for Jacobian and gradient)} Under Assumption \ref{asm:L-f}, we have
    \begin{align*}
        \Norm{\vg_i(\vx)}\leq\Norm{\mJ(\vx)}\leq L_f
    \end{align*}
    for all $ \vx\in\BR^d$ and $i\in [n]$.
\end{lem}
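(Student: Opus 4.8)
The plan is to establish the two inequalities $\Norm{\vg_i(\vx)}\leq\Norm{\mJ(\vx)}$ and $\Norm{\mJ(\vx)}\leq L_f$ separately and then chain them together. The first is a purely linear-algebraic fact about extracting a row of a matrix, and the second is where the Lipschitz hypothesis (Assumption~\ref{asm:L-f}) enters, by identifying the Jacobian with the collection of directional derivatives of $\vf$.

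For the first inequality I would reuse the standard-basis-vector argument already appearing in the proof of Lemma~\ref{le:H-con}. Since the $i$-th row of $\mJ(\vx)$ is $\vg_i(\vx)^\top$, we have $\vg_i(\vx)=\mJ(\vx)^\top\ve_i$ where $\ve_i\in\BR^n$ is the $i$-th standard basis vector. Hence
\[
\Norm{\vg_i(\vx)}=\Norm{\mJ(\vx)^\top\ve_i}\leq\Norm{\mJ(\vx)^\top}\,\Norm{\ve_i}=\Norm{\mJ(\vx)},
\]
using submultiplicativity of the spectral norm, the fact that a matrix and its transpose share the same spectral norm, and $\Norm{\ve_i}=1$.

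For the second inequality I would relate $\mJ(\vx)$ to difference quotients of $\vf$. For any unit vector $\vv\in\BR^d$, the directional derivative of $\vf$ at $\vx$ along $\vv$ equals $\mJ(\vx)\vv=\lim_{t\to 0^+}\big(\vf(\vx+t\vv)-\vf(\vx)\big)/t$; by continuity of the norm and Assumption~\ref{asm:L-f},
\[
\Norm{\mJ(\vx)\vv}=\lim_{t\to 0^+}\frac{\Norm{\vf(\vx+t\vv)-\vf(\vx)}}{t}\leq\lim_{t\to 0^+}\frac{L_f\,t\Norm{\vv}}{t}=L_f.
\]
Taking the supremum over all unit $\vv$ gives $\Norm{\mJ(\vx)}=\sup_{\Norm{\vv}=1}\Norm{\mJ(\vx)\vv}\leq L_f$, and combining with the first inequality finishes the proof. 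There is no real obstacle here; the only point needing a word of justification is the existence of the directional derivative and the interchange of limit and norm, both of which are immediate since Assumptions~\ref{asm:L-f} and~\ref{asm:holder-g} already presume $\vf$ is differentiable with Jacobian $\mJ(\cdot)$.
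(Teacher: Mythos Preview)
Your proposal is correct and follows essentially the same approach as the paper: the paper likewise extracts $\vg_i(\vx)=\mJ(\vx)^\top\ve_i$ and applies submultiplicativity for the first inequality, and uses the directional-derivative limit $\mJ(\vx)\vv=\lim_{h\to 0}(\vf(\vx+h\vv)-\vf(\vx))/h$ together with Assumption~\ref{asm:L-f} to bound $\Norm{\mJ(\vx)\vv}\leq L_f\Norm{\vv}$ for the second. The only cosmetic difference is the order of the two steps.
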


\begin{proof}

    For all $\vx, \vv\in\BR^d$, we have
    \begin{align*}
        \mJ(\vx)\vv =\lim_{h\to0}\frac{\vf(\vx+h\vv)-\vf(\vx)}{h}.
    \end{align*}
    Taking the spectral norm on both sides, we have
    \begin{align*}
        \Norm{\mJ(\vx)\vv}
        & = \lim_{h\to0}\frac{\Norm{\vf(\vx+h\vv)-\vf(\vx)}}{\vert h\vert}\\
        & \leq \lim_{h\to0}\frac{L_f\Norm{\vx+h\vv-\vx}}{\vert h\vert}\\
        & = \lim_{h\to0}\frac{L_f\vert h\vert\Norm{\vv}}{\vert h\vert}\\
        & = L_f\Norm{\vv},
    \end{align*}
    where the inequality comes from Assumption \ref{asm:L-f}. 
    
    Therefore, for all $\vx\in\BR^d$ it holds
    \begin{align*}
    \Norm{\mJ(\vx)}=\sup_{\vv\in\BR^d}\frac{\Norm{\mJ(\vx)\vv}}{\Norm{\vv}}\leq L_f.
    \end{align*}
    Let $\ve_i\in\BR^{n}$ be the $i$-th standard basic vector in $n$-dimensional Euclid space, then we have 

    \begin{align*}
    \Norm{\vg_i(\vx)} = \Norm{\mJ^\top \ve_i} \leq \Norm{\mJ}\Norm{\ve_i}=\Norm{\mJ(\vx)}\leq L_f
    \end{align*}
    for all $i\in[n]$.
\end{proof}

\section{The Auxiliary Sequence and Its Properties}
\label{sec:series}

We construct the following sequence for our convergence analysis in later sections.

\begin{dfn}
\label{def:series}
We define the following sequence $\{a_t(n,\nu)\}_{t\geq0}$ for given $n\in\BN^+$ and $\nu\in(0, 1]$:
\begin{align}
\label{eq:series}
a_t(n,\nu)\triangleq
\begin{cases}
    1,~~~~&t=0,\\[0.15cm]
    \displaystyle{\frac{1}{2(1+\nu)n}\left(\sum_{j=0}^{t-1}(a_j(n, \nu))^{1+\nu} + n - t\right)},~~~~& 1\leq t\leq n,\\[0.6cm]
     \displaystyle{\frac{1}{2(1+\nu)n}\sum_{j=t-n}^{t-1}(a_j(n, \nu))^{1+\nu}},~~~~&t> n.
\end{cases}
\end{align}
\end{dfn}

We then provide several useful properties for the sequence in Definition \ref{def:series}.
 
\begin{lem}
\label{le:s-mono-000} The sequence $\{a_t(n, \nu)\}_{t\geq 0}$ satisfies 
\begin{align*}
    a_t(n, \nu) \leq 1 
\end{align*}
for all $t\geq 0$. 
\end{lem}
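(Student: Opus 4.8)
The plan is to prove the bound $a_t(n,\nu)\le 1$ by strong induction on $t$. The base case $t=0$ is immediate since $a_0(n,\nu)=1$ by definition. For the inductive step, I assume $a_j(n,\nu)\le 1$ for all $0\le j<t$ and split into the two remaining regimes.

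First I would handle the range $1\le t\le n$. Here
\[
a_t(n,\nu)=\frac{1}{2(1+\nu)n}\left(\sum_{j=0}^{t-1}(a_j(n,\nu))^{1+\nu}+n-t\right).
\]
By the inductive hypothesis each $a_j(n,\nu)\le 1$, and since $1+\nu>0$ this gives $(a_j(n,\nu))^{1+\nu}\le 1$, so $\sum_{j=0}^{t-1}(a_j(n,\nu))^{1+\nu}\le t$. Hence the quantity in parentheses is at most $t+(n-t)=n$, and therefore $a_t(n,\nu)\le \frac{n}{2(1+\nu)n}=\frac{1}{2(1+\nu)}\le \frac12\le 1$, using $\nu>0$. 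Next, for $t>n$,
\[
a_t(n,\nu)=\frac{1}{2(1+\nu)n}\sum_{j=t-n}^{t-1}(a_j(n,\nu))^{1+\nu},
\]
and again each of the $n$ summands is at most $1$ by the inductive hypothesis, so the sum is at most $n$ and $a_t(n,\nu)\le \frac{1}{2(1+\nu)}\le 1$. This closes the induction.

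There is no real obstacle here: the only point requiring a moment's care is that $x\mapsto x^{1+\nu}$ is monotone increasing on $[0,\infty)$, so that $0\le a_j(n,\nu)\le 1$ indeed yields $(a_j(n,\nu))^{1+\nu}\le 1$ — this also uses implicitly that the $a_j$ are nonnegative, which follows from the same induction (each defining expression is a nonnegative combination of nonnegative terms, noting $n-t\ge 0$ in the middle regime). If desired one could state nonnegativity as part of the inductive claim, but it is not strictly needed for the stated inequality as long as one observes $(a_j)^{1+\nu}\le |a_j|^{1+\nu}\le 1$. I would present the argument compactly as the three-case induction above.
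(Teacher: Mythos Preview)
Your proof is correct and follows essentially the same approach as the paper: bound each $(a_j)^{1+\nu}$ by $1$ via the inductive hypothesis, so the parenthesized expression is at most $n$, yielding $a_t\le \tfrac{1}{2(1+\nu)}\le 1$. The paper organizes this as two separate inductions (first $0\le t\le n$, then $t\ge n+1$) rather than one strong induction, and does not explicitly address nonnegativity of the $a_j$; your remark on that point is a small improvement in rigor.
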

\begin{proof}
\textbf{Part I:}
We first use induction to prove $a_t(n, \nu) \leq 1$ for all $t=0,1\dots,n$.
For the induction base, we can verify that $a_0(n, \nu) = 1 \leq 1$.  For the induction step, we assume 
\begin{align*}
a_j(n,\nu) \leq 1    
\end{align*}
holds for all $j=1,\dots,t-1$ such that $t\leq n$.
Then we have
\begin{align*}
        a_t(n,\nu) = \frac{1}{2(1+\nu)n}\left(\sum_{j=0}^{t-1}(a_j(n,\nu))^{1+\nu} + n - t\right) \leq \frac{1}{2(1+\nu)n}\left(t + n - t\right) = \frac{1}{2(1+\nu)}\leq 1,
\end{align*}
where the first inequality is based on the induction hypothesis and the last inequality is based on the setting $\nu\in(0,1]$. This finishes the induction. 

\textbf{Part II:} We then use induction to prove $a_t(n, \nu) \leq 1$ for all $t\geq n+1$. For the induction base, we can verify that
\begin{align*}
    a_{n+1}(n,\nu) = \frac{1}{2(1+\nu)n}\sum_{j=1}^{n}(a_j(n, \nu))^{1+\nu} \leq \frac{1}{2(1+\nu)n}\cdot n = \frac{1}{2(1+\nu)} \leq 1,
\end{align*}
where the first inequality is based on $a_t(n, \nu) \leq 1$ for all $t\leq n$ (which have shown in Part I), and the last inequality is based on the setting $\nu \in (0, 1]$. For the induction step, we assume
\begin{align*}
    a_{n+1}(n,\nu) \leq 1
\end{align*}
holds for all $j=n+2,\dots,t-1$ such that $t\geq n+3$. Then we have
\begin{align*}
    a_{t}(n,\nu) = \frac{1}{2(1+\nu)n}\sum_{j=t-n}^{t-1}(a_j(n, \nu))^{1+\nu} \leq \frac{1}{2(1+\nu)n}\cdot n = \frac{1}{2(1+\nu)} \leq 1,
\end{align*}

where the first inequality is based on the induction hypothesis and the last inequality is based on the setting $\nu \in (0, 1]$. This finishes the induction.

Combining the results of above two parts, we finish the proof of this lemma.
\end{proof}

\begin{lem}
\label{le:s-mono}The sequence $\{a_t(n, \nu)\}_{t\geq 0}$ satisfies 
\begin{align*}
    a_{t}(n, \nu) \geq a_{t+1}(n, \nu) 
\end{align*}
for all $t\geq 0$. 

\end{lem}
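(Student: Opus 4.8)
The plan is to prove the monotonicity $a_{t+1}(n,\nu) \le a_t(n,\nu)$ by induction on $t$, treating the three regimes in Definition~\ref{def:series} separately at the boundaries. For brevity write $a_t := a_t(n,\nu)$. The key observation is that for $t > n$ the quantity $2(1+\nu)n\,a_t = \sum_{j=t-n}^{t-1} a_j^{1+\nu}$ is a sliding window sum, so the difference $2(1+\nu)n(a_{t+1}-a_t) = a_t^{1+\nu} - a_{t-n}^{1+\nu}$; thus $a_{t+1}\le a_t$ is equivalent to $a_t \le a_{t-n}$, which would follow if we already knew monotonicity on the earlier indices $t-n,\dots,t$. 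This suggests a strong induction. However, the boundary cases $t=0$, $1\le t\le n-1$, $t=n$, and $t=n+1$ need to be checked by hand against the explicit formula, since the window is not yet "full" there.

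First I would handle the base steps. For $t=0$: $a_0 = 1$ and $a_1 = \tfrac{1}{2(1+\nu)n}(1 + n - 1) = \tfrac{1}{2(1+\nu)} \le 1 = a_0$ since $\nu>0$. For $1 \le t \le n-1$, using the middle branch, $2(1+\nu)n(a_{t+1}-a_t) = a_t^{1+\nu} - 1$, and by Lemma~\ref{le:s-mono-000} we have $a_t \le 1$, hence $a_t^{1+\nu} \le 1$, so $a_{t+1} \le a_t$. For $t=n$: $a_n = \tfrac{1}{2(1+\nu)n}\big(\sum_{j=0}^{n-1} a_j^{1+\nu}\big)$ (the $n-t$ term vanishes) and $a_{n+1} = \tfrac{1}{2(1+\nu)n}\sum_{j=1}^{n} a_j^{1+\nu}$, so $2(1+\nu)n(a_{n+1}-a_n) = a_n^{1+\nu} - a_0^{1+\nu} = a_n^{1+\nu} - 1 \le 0$ again by Lemma~\ref{le:s-mono-000}. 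So the base region $0 \le t \le n$ is done, and moreover we have established $a_{t+1} \le a_t$ there.

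For the inductive step, suppose $t > n$ and assume $a_{s+1} \le a_s$ for all $s < t$; in particular the finite chain $a_{t-n} \ge a_{t-n+1} \ge \cdots \ge a_t$ gives $a_t \le a_{t-n}$, hence $a_t^{1+\nu} \le a_{t-n}^{1+\nu}$ (both nonnegative, using also $a_j \ge 0$ which is clear from the recursion and $a_j \le 1$). Then
\begin{align*}
2(1+\nu)n\,(a_{t+1} - a_t) = \sum_{j=t-n+1}^{t} a_j^{1+\nu} - \sum_{j=t-n}^{t-1} a_j^{1+\nu} = a_t^{1+\nu} - a_{t-n}^{1+\nu} \le 0,
\end{align*}
so $a_{t+1} \le a_t$, completing the induction.

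I do not expect a serious obstacle here; the only point requiring care is bookkeeping at the regime boundaries $t = n-1$, $t = n$, $t = n+1$, where the formula for $a_{t+1}$ switches branch relative to $a_t$ — one must verify that the telescoping identity $a_{t+1} - a_t \propto a_t^{1+\nu} - (\text{dropped term})^{1+\nu}$ still reads correctly, with the "dropped term" being $a_0 = 1$ (via the $n-t$ offset) in the middle branch and $a_{t-n}$ in the last branch. Nonnegativity of the sequence, needed to take $(1+\nu)$-th powers monotonically, follows immediately from Lemma~\ref{le:s-mono-000} together with the fact that each defining expression is a sum of nonnegative quantities divided by a positive number (for $1 \le t \le n$ the term $n-t \ge 0$). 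With these remarks the proof is routine.
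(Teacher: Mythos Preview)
Your proposal is correct and follows essentially the same approach as the paper's proof: both handle $t=0$ directly, use the identity $2(1+\nu)n(a_{t+1}-a_t)=a_t^{1+\nu}-1$ together with Lemma~\ref{le:s-mono-000} for $1\le t\le n$, and then run a (strong) induction for $t>n$ via the telescoping identity $2(1+\nu)n(a_{t+1}-a_t)=a_t^{1+\nu}-a_{t-n}^{1+\nu}$. Your write-up is in fact slightly more explicit than the paper's about the regime boundary $t=n$ and about why nonnegativity justifies monotonicity of $x\mapsto x^{1+\nu}$.
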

\begin{proof}
\textbf{Part I:} For $t=0$, the fact $\nu\in(0, 1]$ means
\begin{align*}
    a_1(n, \nu) = \frac{1}{2(1+\nu)} \leq 1 = a_0(n, \nu).
\end{align*}
\textbf{Part II:}
For all $t=1,\dots,n-1$, we have
\begin{align*}
        & a_{t+1}(n, \nu) - a_{t}(n, \nu) \\
        & = \frac{1}{2(1+\nu)n}\left(\sum_{j=0}^{t}(a_j(n, \nu))^{1+\nu} + n - t - 1\right) - \frac{1}{2(1+\nu)n}\left(\sum_{j=0}^{t-1}(a_j(n, \nu))^{1+\nu} + n - t\right)\\
        & = \frac{1}{2(1+\nu)n}\left((a_{t}(n, \nu))^{1+\nu} - 1\right)  \leq 0,
\end{align*}     
where the last inequality is based on Lemma \ref{le:s-mono-000}. This indicates $a_{t+1}\leq a_{t}$ for $t=1,\dots,n-1$. 

\textbf{Part III:} For all $t\geq n$, we use induction to prove $a_{t+1}(n, \nu)\leq a_{t}(n, \nu)$. 
For the induction base, we can verify that
    \begin{align*}
        & a_{n+1}(n, \nu) - a_n(n, \nu) \\
        & = \frac{1}{2(1+\nu)n}\sum_{j=1}^{n}(a_j(n, \nu))^{1+\nu} - \frac{1}{2(1+\nu)n}\left(\sum_{j=1}^{n-1}(a_j(n, \nu))^{1+\nu} + 1\right)\\
        & = \frac{1}{2(1+\nu)n}\left((a_n(n, \nu))^{1+\nu} - 1\right)\\
        & \leq 0,
    \end{align*}
    where the last inequality is based on Lemma \ref{le:s-mono-000}. 
    
    For the induction step, we assume
\begin{align*}
    a_{j+1}(n, \nu) \leq a_{j}(n, \nu)
\end{align*}
holds for all $j = n+1, \cdots, t-1$ such that $t\geq n+2$. Then we have
    \begin{align*}
        & a_{t+1}(n, \nu) - a_{t}(n, \nu) \\
         =&  \frac{1}{2(1+\nu)n}\sum_{j=t-n + 1}^{t}(a_j(n, \nu)) ^{1+\nu} - \frac{1}{2(1+\nu)n}\sum_{j=t-n}^{t-1}(a_j(n, \nu))^{1+\nu} \leq 0,
    \end{align*}
where the inequality is based on the induction hypothesis and the fact $a_{t+1}(n, \nu) \leq a_{t}(n, \nu)$ for all~$t \leq n - 1$ (which have shown in Part I).

Combining the results of above three parts, we finish the proof of this lemma.
\end{proof}

\begin{lem}
\label{le:n-qua-con}
     For the sequence $\{a_t(n, \nu)\}_{t\geq 0}$, we have
    \begin{align*}
    a_t(n, \nu) \leq \frac{1}{2(1+\nu)}(a_{t-n}(n, \nu))^{1+\nu}
    \end{align*}
    for all $t\geq n$.
\end{lem}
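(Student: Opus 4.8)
The plan is to prove Lemma~\ref{le:n-qua-con} by a direct case analysis on $t$, using the explicit recursive definition of $a_t(n,\nu)$ together with the monotonicity property from Lemma~\ref{le:s-mono}. The key observation is that for $t\geq n$ the sum $\sum_{j=t-n}^{t-1}(a_j(n,\nu))^{1+\nu}$ appearing in the definition has exactly $n$ terms, each of which is at most $(a_{t-n}(n,\nu))^{1+\nu}$ because the sequence is non-increasing. So the strategy is: bound the sum by $n\cdot(a_{t-n}(n,\nu))^{1+\nu}$, and then the prefactor $\frac{1}{2(1+\nu)n}$ cancels the factor $n$, leaving exactly $\frac{1}{2(1+\nu)}(a_{t-n}(n,\nu))^{1+\nu}$.

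First I would split into two cases according to which branch of \eqref{eq:series} defines $a_t$. For $t>n$, we have directly
\[
a_t(n,\nu)=\frac{1}{2(1+\nu)n}\sum_{j=t-n}^{t-1}(a_j(n,\nu))^{1+\nu}.
\]
Since $t-n\leq j$ for every index $j$ in the sum, Lemma~\ref{le:s-mono} gives $a_j(n,\nu)\leq a_{t-n}(n,\nu)$, hence $(a_j(n,\nu))^{1+\nu}\leq (a_{t-n}(n,\nu))^{1+\nu}$ (the map $x\mapsto x^{1+\nu}$ is monotone on $[0,\infty)$ and all terms are nonnegative). Summing the $n$ terms yields the bound $n(a_{t-n}(n,\nu))^{1+\nu}$, and dividing by $2(1+\nu)n$ gives the claim. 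For the boundary case $t=n$, the relevant branch is the middle one,
\[
a_n(n,\nu)=\frac{1}{2(1+\nu)n}\left(\sum_{j=0}^{n-1}(a_j(n,\nu))^{1+\nu}+n-n\right)=\frac{1}{2(1+\nu)n}\sum_{j=0}^{n-1}(a_j(n,\nu))^{1+\nu},
\]
so the $n-t$ term vanishes and the same argument applies with $a_{t-n}=a_0=1$: each $(a_j(n,\nu))^{1+\nu}\leq (a_0(n,\nu))^{1+\nu}=1$ by Lemma~\ref{le:s-mono} (or simply Lemma~\ref{le:s-mono-000}), giving $a_n(n,\nu)\leq \frac{1}{2(1+\nu)}=\frac{1}{2(1+\nu)}(a_0(n,\nu))^{1+\nu}$.

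I do not expect any serious obstacle here; the proof is short and essentially a one-line estimate once the monotonicity lemma is in hand. The only mild care needed is handling the $t=n$ case separately, since it sits on the boundary between the middle and last branches of the definition of $a_t(n,\nu)$ — but as shown above the $n-t$ correction term is exactly zero there, so the two branches agree and the argument is uniform. It is also worth noting in passing that this lemma is what ultimately drives the $(1+\nu)$-th power (hence superlinear / $n$-step quadratic when $\nu=1$) contraction of $r_t$ in Theorem~\ref{thm:holder-IGN-1}, via the identification of $r_t$ with a scaled version of $a_t(n,\nu)$.
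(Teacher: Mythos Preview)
Your proposal is correct and follows essentially the same approach as the paper: bound each of the $n$ terms in the sum by $(a_{t-n}(n,\nu))^{1+\nu}$ using the monotonicity from Lemma~\ref{le:s-mono}, then cancel the factor $n$ against the prefactor $\frac{1}{2(1+\nu)n}$. Your explicit handling of the boundary case $t=n$ is slightly more careful than the paper's, which tacitly uses that the middle and last branches of~\eqref{eq:series} coincide there.
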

\begin{proof}
    For all $t\geq n$, the definition of $a_t(n, \nu)$ implies
    \begin{align*}
    & a_t(n, \nu) = \frac{1}{2(1+\nu)n}\left(\sum_{j=t-n}^{t-1}(a_j(n, \nu))^{1+\nu}\right)  \\
    & \leq \frac{1}{2(1+\nu)} \max\{(a_{t-n}(n, \nu))^{1+\nu},\cdots, (a_{t-1}(n, \nu))^{1+\nu}\}.
    \end{align*}
    Additionally, Lemma \ref{le:s-mono} implies for all $t\geq n$, we have
    \begin{align*}
    \max\{(a_{t-n}(n, \nu))^{1+\nu},\cdots, (a_{t-1}(n, \nu))^{1+\nu}\} = (a_{t-n}(n, \nu))^{1+\nu}, \quad t\geq n.
    \end{align*}
    Combining above results, we achieve
    \begin{align*}
    a_t(n, \nu) \leq \frac{1}{2(1+\nu)}(a_{t-n}(n, \nu))^{1+\nu}
    \end{align*}
    for all $t\geq n$.
\end{proof}

\begin{lem}
\label{le:linear-con} For the sequence $\{a_t(n, \nu)\}_{t\geq 0}$, we have 
\begin{align*}
a_{t+1}(n, \nu)\leq c_0 a_t(n, \nu)
\end{align*}
for all $t\geq n$, where
\begin{align*}
    c_0 = 1- \frac{1}{n}\left(1-\left(\frac{1}{2(1+\nu)}\right)^{1+\nu}\right).
\end{align*}
\end{lem}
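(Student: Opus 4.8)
The plan is to reduce the whole statement to a single one-line identity. First I would show that for every $t \geq n$,
\begin{align*}
a_{t+1}(n,\nu) - a_t(n,\nu) = \frac{1}{2(1+\nu)n}\Big((a_t(n,\nu))^{1+\nu} - (a_{t-n}(n,\nu))^{1+\nu}\Big).
\end{align*}
When $t > n$, both $a_{t+1}(n,\nu)$ and $a_t(n,\nu)$ are given by the third branch of Definition \ref{def:series}, so this is just the telescoping of two consecutive length-$n$ partial sums of $(a_j(n,\nu))^{1+\nu}$. For the boundary case $t = n$ one uses the second branch for $a_n(n,\nu)$ and the third branch for $a_{n+1}(n,\nu)$; the term $n - t$ vanishes at $t = n$, and the summand dropped in passing from $a_n(n,\nu)$ to $a_{n+1}(n,\nu)$ is $(a_0(n,\nu))^{1+\nu} = 1 = (a_{t-n}(n,\nu))^{1+\nu}$, so the same identity holds.

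Next I would substitute two facts that are already available for $t \geq n$. From Lemma \ref{le:n-qua-con} we have $a_t(n,\nu) \leq \frac{1}{2(1+\nu)}(a_{t-n}(n,\nu))^{1+\nu}$, i.e. $(a_{t-n}(n,\nu))^{1+\nu} \geq 2(1+\nu)\,a_t(n,\nu)$; plugging this into the identity gives
\begin{align*}
a_{t+1}(n,\nu) - a_t(n,\nu) \leq \frac{(a_t(n,\nu))^{1+\nu}}{2(1+\nu)n} - \frac{a_t(n,\nu)}{n}.
\end{align*}
I would also use the uniform bound $a_t(n,\nu) \leq \frac{1}{2(1+\nu)}$ for $t \geq n$: for $t = n$ this is exactly the estimate already proved in Part~I of Lemma \ref{le:s-mono-000} (there one shows $a_t(n,\nu) \leq \frac{1}{2(1+\nu)}$ for all $1 \leq t \leq n$), and for $t > n$ it follows from Lemma \ref{le:n-qua-con} together with $a_{t-n}(n,\nu) \leq 1$ from Lemma \ref{le:s-mono-000}. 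Since $\nu > 0$, this gives $(a_t(n,\nu))^{\nu} \leq \big(\frac{1}{2(1+\nu)}\big)^{\nu}$, hence $(a_t(n,\nu))^{1+\nu} = a_t(n,\nu)\cdot(a_t(n,\nu))^{\nu} \leq a_t(n,\nu)\big(\frac{1}{2(1+\nu)}\big)^{\nu}$.

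Substituting this last bound and factoring out $a_t(n,\nu)/n$ turns the displayed inequality into
\begin{align*}
a_{t+1}(n,\nu) - a_t(n,\nu) \leq \frac{a_t(n,\nu)}{n}\left(\Big(\frac{1}{2(1+\nu)}\Big)^{1+\nu} - 1\right),
\end{align*}
and adding $a_t(n,\nu)$ to both sides gives precisely $a_{t+1}(n,\nu) \leq c_0\,a_t(n,\nu)$ with the stated $c_0$. I do not expect a real obstacle; the only two points that need care are the boundary case $t = n$, where the remark above lets one identity cover both branches of the recursion, and the observation that $a_t(n,\nu) \geq 0$ (immediate from Definition \ref{def:series} and Lemma \ref{le:s-mono-000}), which is what allows multiplying through by $a_t(n,\nu)/n$ without flipping the inequalities.
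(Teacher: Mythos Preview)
Your proof is correct and uses the same two ingredients as the paper --- the telescoping relation between $a_{t+1}$ and $a_t$, and Lemma~\ref{le:n-qua-con} to convert $(a_{t-n})^{1+\nu}$ into a multiple of $a_t$ --- but the organization differs. The paper never writes down your clean identity $a_{t+1}-a_t=\frac{1}{2(1+\nu)n}\big((a_t)^{1+\nu}-(a_{t-n})^{1+\nu}\big)$; instead it starts from the sum defining $a_{t+1}$, bounds the single term $(a_t)^{1+\nu}$ by $\big(\tfrac{1}{2(1+\nu)}\big)^{1+\nu}(a_{t-n})^{1+\nu}$ (Lemma~\ref{le:n-qua-con} raised to the power $1+\nu$), then adds and subtracts $(a_{t-n})^{1+\nu}$ to recover $a_t$, and finally applies Lemma~\ref{le:n-qua-con} once more to replace $(a_{t-n})^{1+\nu}$ by $2(1+\nu)a_t$. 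Your route is more transparent: isolating the telescoping identity first turns the whole argument into two substitutions, and bounding $(a_t)^{1+\nu}\le a_t\big(\tfrac{1}{2(1+\nu)}\big)^{\nu}$ via the uniform estimate $a_t\le\tfrac{1}{2(1+\nu)}$ avoids the paper's somewhat opaque add-and-subtract step. The end inequality and the constant $c_0$ coincide exactly.
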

\begin{proof}
    For all $t\geq n$, we have
    \begin{align}\label{eq:lemma9-ieq}
    a_t(n, \nu) \leq \frac{1}{2(1+\nu)}(a_{t-n}(n, \nu))^{1+\nu} \leq \frac{1}{2(1+\nu)}a_{t-n}(n, \nu), 
    \end{align}
    where the first inequality is based on Lemma \ref{le:n-qua-con} and the second one is based on Lemma \ref{le:s-mono-000}.
    
    Then we also have 
    \begin{align*}\small\begin{split}
        a_{t+1} 
        & = \frac{1}{2(1+\nu)n}\left(\sum_{j=t-n+1}^{t}(a_j(n, \nu))^{1+\nu}\right)\\
        & \leq \frac{1}{2(1+\nu)n}\left(\left(\frac{1}{2(1+\nu)}\right)^{1+\nu}(a_{t-n}(n, \nu))^{1+\nu} + \sum_{j=t-n+1}^{t-1}(a_j(n, \nu))^{1+\nu}\right)\\
        & = \frac{1}{2(1+\nu)n}\left(\left(\frac{1}{2(1+\nu)}\right)^{1+\nu}(a_{t-n}(n, \nu))^{1+\nu} + \sum_{j=t-n+1}^{t-1}(a_j(n, \nu))^{1+\nu} + (a_{t-n}(n, \nu))^{1+\nu} - (a_{t-n}(n, \nu))^{1+\nu} \right)\\
        & = a_{t}(n, \nu) + \frac{1}{2(1+\nu)n}\left(\left(\frac{1}{2(1+\nu)}\right)^{1+\nu}(a_{t-n}(n, \nu))^{1+\nu} - (a_{t-n}(n, \nu))^{1+\nu} \right)\\
        & = a_{t}(n, \nu) - \frac{1}{2(1+\nu)n}\left(1-\left(\frac{1}{2(1+\nu)}\right)^{1+\nu}\right)(a_{t-n}(n, \nu))^{1+\nu}\\
        & \leq a_{t}(n, \nu) - \frac{1}{n}\left(1-\left(\frac{1}{2(1+\nu)}\right)^{1+\nu}\right)a_t(n, \nu)\\
        & = \left(1- \frac{1}{n}\left(1-\left(\frac{1}{2(1+\nu)}\right)^{1+\nu}\right)\right)a_t(n, \nu),
     \end{split}
    \end{align*}
    for all $t\geq n$, where the first inequality is based on equation (\ref{eq:lemma9-ieq}) and the last inequality is based on Lemma \ref{le:n-qua-con}. This finish the proof.
\end{proof} 

\begin{lem}
\label{le:imp-c} For the sequence $\{a_t(n, \nu)\}_{t\geq 0}$, if there exists $c_1\in(0,1)$ and $t_0\geq 0$ such that
\begin{align}\label{eq:ieq-t0n2n}
a_{t+1}(n, \nu) \leq c_1 a_t(n, \nu)
\end{align}
for all $t\geq t_0 + n$, 
then we have
\begin{align*}
a_{t+1}(n, \nu) \leq c_1^{1+\nu} a_t(n, \nu)
\end{align*}
for all $t\geq t_0 + 2n$.
\end{lem}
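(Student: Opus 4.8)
The plan is to work directly from the third branch of Definition~\ref{def:series}, namely $a_t(n,\nu) = \frac{1}{2(1+\nu)n}\sum_{j=t-n}^{t-1}(a_j(n,\nu))^{1+\nu}$, which is available whenever $t>n$. The key observation is that the defining sum of $a_{t+1}(n,\nu)$ is, term by term, a one-step forward shift of the defining sum of $a_t(n,\nu)$: after re-indexing, each summand $(a_{i+1}(n,\nu))^{1+\nu}$ in the expression for $a_{t+1}(n,\nu)$ is paired with the summand $(a_i(n,\nu))^{1+\nu}$ of $a_t(n,\nu)$. So the hypothesized one-step contraction $a_{i+1}(n,\nu)\leq c_1 a_i(n,\nu)$ can be substituted summand by summand, once it is raised to the power $1+\nu$.

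Concretely, I would first note that for $t\geq t_0+2n$ we have both $t>n$ and $t+1>n$ (since $n\in\BN^+$ gives $2n\geq n+1$), so the third branch of Definition~\ref{def:series} applies to $a_t(n,\nu)$ and to $a_{t+1}(n,\nu)$. Writing $a_{t+1}(n,\nu)=\frac{1}{2(1+\nu)n}\sum_{j=t+1-n}^{t}(a_j(n,\nu))^{1+\nu}$ and substituting $i=j-1$ gives $a_{t+1}(n,\nu)=\frac{1}{2(1+\nu)n}\sum_{i=t-n}^{t-1}(a_{i+1}(n,\nu))^{1+\nu}$. For every index $i$ in this range we have $i\geq t-n\geq t_0+n$, so hypothesis~\eqref{eq:ieq-t0n2n} yields $a_{i+1}(n,\nu)\leq c_1 a_i(n,\nu)$; since $a_{i+1}(n,\nu)\geq 0$, $0<c_1<1$, and $x\mapsto x^{1+\nu}$ is nondecreasing on $[0,\infty)$, this gives $(a_{i+1}(n,\nu))^{1+\nu}\leq c_1^{1+\nu}(a_i(n,\nu))^{1+\nu}$. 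Summing over $i=t-n,\dots,t-1$ and pulling out the factor $c_1^{1+\nu}$ leaves exactly $\frac{1}{2(1+\nu)n}\sum_{i=t-n}^{t-1}(a_i(n,\nu))^{1+\nu}=a_t(n,\nu)$, hence $a_{t+1}(n,\nu)\leq c_1^{1+\nu}a_t(n,\nu)$ for all $t\geq t_0+2n$.

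There is no deep obstacle here; the care is entirely in the bookkeeping. The main point is (i) checking that the shifted index set $\{t-n,\dots,t-1\}$ stays inside the region $\{\,\cdot\geq t_0+n\,\}$ on which the hypothesis holds — this is precisely what forces the threshold $t\geq t_0+2n$ instead of $t\geq t_0+n$; and (ii) justifying the step $(a_{i+1}(n,\nu))^{1+\nu}\leq c_1^{1+\nu}(a_i(n,\nu))^{1+\nu}$, which needs nonnegativity of the sequence (immediate from Definition~\ref{def:series}, or from Lemma~\ref{le:s-mono-000}) so that monotonicity of the power map applies. I would also remark that this one-step estimate is exactly the mechanism by which a base linear rate such as the $c_0$ of Lemma~\ref{le:linear-con} is progressively boosted into the superlinear rates of the main theorems, though the present statement only requires the single boosting step above.
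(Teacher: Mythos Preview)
Your proposal is correct and follows essentially the same approach as the paper: write $a_{t+1}(n,\nu)$ via the third branch of Definition~\ref{def:series}, shift the index by one, apply the hypothesis $a_{i+1}\leq c_1 a_i$ termwise (valid since each $i\geq t-n\geq t_0+n$), raise to the power $1+\nu$, and recognize the resulting sum as $c_1^{1+\nu}a_t(n,\nu)$. Your version is actually more careful than the paper's, as you explicitly justify the nonnegativity needed for the power-map monotonicity and spell out why the threshold $t_0+2n$ is exactly what is required for the shifted index range to stay within the region where the hypothesis applies.
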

\begin{proof}
    For all  $t\geq t_0 + 2n$, we have
    \begin{align*}
        & a_{t+1} (n, \nu)
         = \frac{1}{2(1+\nu)n}\sum_{j=t-n+1}^{t}(a_j(n, \nu))^{1+\nu} \\
        & \leq \frac{1}{2(1+\nu)n}\sum_{j=t-n}^{t-1}c_1^{1+\nu}(a_j(n, \nu))^{1+\nu} = c_1^{1+\nu} a_t(n, \nu),
    \end{align*}
    where the inequality is based on equation (\ref{eq:ieq-t0n2n}).
\end{proof}

\begin{lem}
\label{le:super-con} For the sequence $\{a_t(n, \nu)\}_{t\geq 0}$,
we have the superlinear convergence
\begin{align*}
a_{t+1}(n, \nu) \leq c^{(1+\nu)^{\left(\floor{\frac{t-1}{n}}-1\right)}}a_t(n, \nu) 
\end{align*}
for all $t\geq n$, where
\begin{align*}
    c = 1- \frac{1}{n}\left(1-\left(\frac{1}{2(1+\nu)}\right)^{1+\nu}\right).
\end{align*}
\begin{proof}
    According to Lemma \ref{le:linear-con}, we have
    \begin{align*}
        a_{t+1}(n, \nu)\leq c a_t(n, \nu) 
        \quad \text{for all}~~t \geq n.
    \end{align*}
    According to Lemma \ref{le:imp-c}, we have
    \begin{align*}
        a_{t+1}(n, \nu) & \leq c^{1+\nu} a_t(n, \nu) \quad \text{for all}~~t \geq 2n,\\
        a_{t+1}(n, \nu) & \leq c^{(1+\nu)^2} a_t(n, \nu) \quad \text{for all}~~t \geq 3n,\\
        a_{t+1}(n, \nu) & \leq c^{(1+\nu)^3} a_t(n, \nu) \quad \text{for all}~~t \geq 4n,\\
        \cdots &
    \end{align*}
    which implies
    \begin{align*}
        a_{t+1}(n, \nu) \leq c^{(1+\nu)^{\left(\lfloor {t}/{n}  \rfloor-1\right)}}a_t(n, \nu)
    \end{align*}
    for all $t \geq n$.

    The superlinear convergence of the sequence $\{a_t(n, \nu)\}_{t\geq 0}$ can be verify by the fact
    \begin{align*}
        \lim_{t\to\infty}c^{(1+\nu)^{\left(\lfloor {t}/{n}  \rfloor-1\right)}}=0.
    \end{align*}
    Hence, we finish the proof.
\end{proof}
\end{lem}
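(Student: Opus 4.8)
The plan is to obtain the stated superlinear estimate by \emph{bootstrapping} the linear contraction of Lemma~\ref{le:linear-con} through repeated applications of the amplification Lemma~\ref{le:imp-c}, since both of these are already available. First I would record the base step: Lemma~\ref{le:linear-con} gives $a_{t+1}(n,\nu)\le c\,a_t(n,\nu)$ for all $t\ge n$, which is exactly hypothesis~(\ref{eq:ieq-t0n2n}) of Lemma~\ref{le:imp-c} with $c_1=c$ and $t_0=0$. Applying Lemma~\ref{le:imp-c} once yields $a_{t+1}(n,\nu)\le c^{1+\nu}a_t(n,\nu)$ for all $t\ge 2n$; this is again hypothesis~(\ref{eq:ieq-t0n2n}) with $c_1=c^{1+\nu}$ and $t_0=n$, so a second application gives $a_{t+1}(n,\nu)\le c^{(1+\nu)^2}a_t(n,\nu)$ for all $t\ge 3n$, and so on.

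Next I would formalize this chain as an induction on $j\ge 1$ with the claim: $a_{t+1}(n,\nu)\le c^{(1+\nu)^{j-1}}a_t(n,\nu)$ for all $t\ge jn$. The base case $j=1$ is Lemma~\ref{le:linear-con}. For the inductive step, assuming the bound for $j$, its validity range $t\ge jn=(j-1)n+n$ is precisely condition~(\ref{eq:ieq-t0n2n}) of Lemma~\ref{le:imp-c} with $c_1=c^{(1+\nu)^{j-1}}$ and $t_0=(j-1)n$; invoking the lemma produces $a_{t+1}(n,\nu)\le \big(c^{(1+\nu)^{j-1}}\big)^{1+\nu}a_t(n,\nu)=c^{(1+\nu)^{j}}a_t(n,\nu)$ for all $t\ge(j-1)n+2n=(j+1)n$, which is the claim for $j+1$. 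Then for arbitrary $t\ge n$ I would take $j=\floor{t/n}\ge 1$, so $t\ge jn$ and the induction yields $a_{t+1}(n,\nu)\le c^{(1+\nu)^{\floor{t/n}-1}}a_t(n,\nu)$. Since $c\in(0,1)$ and $1+\nu\ge 1$, the exponent $(1+\nu)^{\floor{(t-1)/n}-1}$ appearing in the statement is no larger than $(1+\nu)^{\floor{t/n}-1}$, hence $c^{(1+\nu)^{\floor{(t-1)/n}-1}}\ge c^{(1+\nu)^{\floor{t/n}-1}}$, so the displayed bound in the lemma follows immediately from the stronger one just derived (and conversely one could run the same induction restricted to $t\ge jn+1$ to get the tighter exponent directly).

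Finally I would confirm that this genuinely certifies superlinear decay. From $\nu\in(0,1]$ we get $0<\big(\tfrac{1}{2(1+\nu)}\big)^{1+\nu}<1$, hence $0<1-\big(\tfrac{1}{2(1+\nu)}\big)^{1+\nu}<1$; dividing by $n\ge 1$ and subtracting from $1$ keeps $c$ strictly inside $(0,1)$ (indeed $c\ge 1-1/n$, and $c=\big(\tfrac{1}{2(1+\nu)}\big)^{1+\nu}>0$ when $n=1$). Therefore $(1+\nu)^{\floor{t/n}-1}\to\infty$ and $c^{(1+\nu)^{\floor{t/n}-1}}\to 0$ as $t\to\infty$, which is the asserted superlinear convergence.

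I do not anticipate a real obstacle: every ingredient is packaged in Lemmas~\ref{le:linear-con} and~\ref{le:imp-c}, and the only thing requiring care is the bookkeeping of the shifting window $t_0\mapsto t_0+n$ across the chained applications of Lemma~\ref{le:imp-c}, so that the validity ranges $t\ge jn$ stay synchronized with the exponents $(1+\nu)^{j-1}$; getting that index bookkeeping exactly right is the part I would write out most carefully.
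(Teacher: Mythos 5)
Your proposal is correct and follows essentially the same route as the paper: Lemma~\ref{le:linear-con} as the base case, iterated applications of Lemma~\ref{le:imp-c} to amplify the exponent, and the observation that $c\in(0,1)$ gives the superlinear decay. You are in fact slightly more careful than the paper's own write-up, both in making the $t_0\mapsto t_0+n$ bookkeeping an explicit induction and in reconciling the exponent $\floor{(t-1)/n}-1$ in the lemma's statement with the $\floor{t/n}-1$ that the argument naturally produces (the paper silently derives the latter).
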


\section{The Convergence Analysis for \texttt{IGN}}
\label{sec:con-ign}

In this section, we provide the proofs for result in Section \ref{sec:ign}.

\subsection{The Proof of Proposition \ref{prop:b-JJ-sing}}

\begin{proof}
We denote the singular value decomposition of $\mJ(\vx^*)$ as
    \begin{align*}
        \mJ(\vx^*) = \mP\mD\mQ^\top,
    \end{align*}
    where $\mP\in\BR^{n\times d}, \mQ\in\BR^{d\times d}$ are (column) orthogonal matrices and $\mD\in\BR^{d\times d}$ is diagonal matrix with the smallest diagonal entry of $\mu>0$.
    Therefore, we have
    \begin{align*}
        \mJ(\vx^*)^\top \mJ(\vx^*) = \mQ\mD^2\mQ^\top,
    \end{align*}
    which means the smallest singular value of $\mJ(\vx^*)^\top \mJ(\vx^*)$ is equal to the smallest value of $\mD^2$, which is $\mu^2$. Therefore, we have
    \begin{align*}
        \sigma_{\min}(\mJ(\vx^*)^\top\mJ(\vx^*)) \geq \mu^2.
    \end{align*}
\end{proof}

\subsection{Proof of Lemma \ref{le:L-JJ}}

\begin{proof}
    The Jacobian holds that
    \begin{align*}
        \Norm{\mJ(\vy)^\top\mJ(\vy) - \mJ(\vx)^\top\mJ(\vx)}
        & = \Norm{\mJ(\vy)^\top\mJ(\vy) - \mJ(\vx)^\top\mJ(\vy) + \mJ(\vx)^\top\mJ(\vy) - \mJ(\vx)^\top\mJ(\vx)}\\
        & \leq \Norm{\left(\mJ(\vy)-\mJ(\vx)\right)^\top\mJ(\vy)} + \Norm{\mJ(\vx)^\top\left(\mJ(\vy) - \mJ(\vx)\right)}\\
        & \leq \Norm{\mJ(\vy)}\Norm{\mJ(\vy)-\mJ(\vx)} + \Norm{\mJ(\vx)}\Norm{\mJ(\vy)-\mJ(\vx)}\\
        & \leq 2L_f\fH_\nu\Norm{\vy-\vx}^\nu,
    \end{align*}
    where the first inequality comes from triangular inequality, the second inequality comes from property of norm, and the last inequality is based on Lemma \ref{le:b-grad} and Assumption \ref{asm:holder-g}. 
    
    For all $j\in[n]$, the gradient holds that
    \begin{align*}
        \Norm{\vg_i(\vy)\vg_i(\vy)^\top\!- \vg_i(\vx)\vg_i(\vx)^\top}
        & = \Norm{\vg_i(\vy)\vg_i(\vy)^\top\!- \vg_i(\vx)\vg_i(\vy)^\top + \vg_i(\vx)\vg_i(\vy)^\top\!- \vg_i(\vx)\vg_i(\vx)^\top}\\
        & \leq \Norm{\left(\vg_i(\vy)-\vg_i(\vx)\right)\vg_i(\vy)^\top} + \Norm{\vg_i(\vx)\left(\vg_i(\vy)-\vg_i(\vx)\right)^\top}\\
        & \leq \Norm{\vg_i(\vy)}\Norm{\vg_i(\vy)-\vg_i(\vx)} + \Norm{\vg_i(\vx)}\Norm{\vg_i(\vy)-\vg_i(\vx)}\\
        & \leq 2L_f\fH_\nu\Norm{\vy-\vx}^\nu,
    \end{align*}
    where the first inequality comes from triangular inequality, the second inequality comes from property of norm, and the last inequality is based on Lemma \ref{le:H-con} and \ref{le:b-grad}.
\end{proof}

\subsection{Proof of Lemma \ref{le:lb-Ht-new-1}}

\begin{proof}
    We have
    \begin{align*}
        \Norm{\mH^t - \mJ(\vx^*)^\top\mJ(\vx^*)} &= \Norm{\sum_{i=1}^n\vg_i(\vz_i^t)\vg_i(\vz_i^t)^\top - \sum_{i=1}^n\vg_i(\vx^*)\vg_i(\vx^*)^\top}\\
        & \leq \sum_{i=1}^n\Norm{\vg_i(\vz_i^t)\vg_i(\vz_i^t)^\top - \vg_i(\vx^*)\vg_i(\vx^*)^\top}\\
        & \leq \sum_{i=1}^n 2L_f\fH_\nu\Norm{\vz_i^t - \vx^*}^\nu,
    \end{align*}
    where the first inequality comes from the triangle inequality and the second inequality is based on Lemma \ref{le:L-JJ}. 
    Thus, we have
     \begin{align*}
         \mH^{t}-\mJ(\vx^*)^{\top}\mJ(\vx^*)\succeq - \sum_{i=1}^n 2L_f\fH_\nu\Norm{\vz_i^t - \vx^*}^\nu \cdot\mI, 
     \end{align*}
    which implies that
    \begin{align*}
        \sigma_{\min}(\mH^t) \geq \sigma_{\min}(\mJ(\vx^*)^\top\mJ(\vx^*)) - \sum_{i=1}^n 2L_f\fH_\nu\Norm{\vz_i^t - \vx^*}^\nu = \mu^2 - 2L_f\fH_\nu\sum_{i=1}^n \Norm{\vz_i^t - \vx^*}^\nu,
    \end{align*}
    where the last step is based on Proposition \ref{prop:b-JJ-sing}.
\end{proof}

\subsection{The Proof of Theorem \ref{thm:holder-IGN-1}}

We first show the update
\begin{align*}
\mG^{t+1} =\mG^{t} - \mG^{t}\mU^t(\mI + (\mV^t)^\top\mG^{t}\mU^t)^{-1}(\mV^t)^\top\mG^{t}    
\end{align*}
in \IGN~method (Line \ref{line:update-G} of Algorithm \ref{alg:IGN-1}) is well-defined if the matrices $\mH^t$ and $\mH^{t+1}$ are non-singular.

\begin{lem}
\label{le:via-update}
    Following the setting of Theorem \ref{thm:holder-IGN-1}, if the matrices $\mH^t$ and $\mH^{t+1}$ are non-singular, then the matrix $\mI + {\mV^t}^\top\mG^{t}\mU^t$ is also non-singular, where
    \begin{align*}
    \mU^t = \begin{bmatrix}
        - \vg_{i_t}(\vz_{i_t}^t) \!& 
        \vg_{i_t}(\vx^{t+1})
    \end{bmatrix}\in\BR^{d\times 2},~~
    \mV^t = \begin{bmatrix}
        \vg_{i_t}(\vz_{i_t}^t) \!& 
        \vg_{i_t}(\vx^{t+1})
    \end{bmatrix}\in\BR^{d\times 2}~~\text{and}~~i_t ={t\%n} + 1.
    \end{align*}
\end{lem}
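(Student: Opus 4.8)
The plan is to use the Sherman--Morrison--Woodbury (SMW) identity in the direction opposite to how it motivated the update rule. Recall that the recursion $\mG^{t+1} = \mG^{t} - \mG^{t}\mU^t(\mI + (\mV^t)^\top\mG^{t}\mU^t)^{-1}(\mV^t)^\top\mG^{t}$ was derived from the identity $\mH^{t+1} = \mH^{t} + \mU^t(\mV^t)^\top$, which one can verify directly from the definitions of $\mU^t$, $\mV^t$ and the $\mH$-recursion: $\mU^t(\mV^t)^\top = -\vg_{i_t}(\vz_{i_t}^t)\vg_{i_t}(\vz_{i_t}^t)^\top + \vg_{i_t}(\vx^{t+1})\vg_{i_t}(\vx^{t+1})^\top$. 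So first I would record that algebraic fact, noting it does not require any non-singularity.

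The key point is the standard lemma on low-rank perturbations: for any invertible $\mA\in\BR^{d\times d}$ and any $\mU,\mV\in\BR^{d\times 2}$, the matrix $\mA + \mU\mV^\top$ is invertible if and only if the capacitance matrix $\mI_2 + \mV^\top\mA^{-1}\mU$ is invertible. I would cite this (it is a direct consequence of the factorization $\begin{bmatrix}\mA & -\mU \\ \mV^\top & \mI_2\end{bmatrix}$ having two Schur complements $\mA + \mU\mV^\top$ and $\mI_2 + \mV^\top\mA^{-1}\mU$, so the two determinants agree up to the factor $\det\mA$). Applying this with $\mA = \mH^t$ (invertible by hypothesis, with $\mG^t = (\mH^t)^{-1}$), $\mU = \mU^t$, $\mV = \mV^t$, and using $\mH^{t+1} = \mH^t + \mU^t(\mV^t)^\top$, the non-singularity of $\mH^{t+1}$ immediately gives the non-singularity of $\mI + (\mV^t)^\top\mG^t\mU^t$, which is exactly the claim. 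Once this equivalence is in hand, the proof is essentially two lines, and it also retroactively justifies that the displayed SMW formula for $\mG^{t+1}$ indeed equals $(\mH^{t+1})^{-1}$.

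I do not anticipate a serious obstacle here; this is a bookkeeping lemma whose only content is the determinant identity for $2\times 2$ capacitance matrices and the verification that $\mH^{t+1} - \mH^t$ factors as $\mU^t(\mV^t)^\top$. The one place to be careful is the orientation of signs: the first column of $\mU^t$ carries a minus sign while the first column of $\mV^t$ does not, so that $\mU^t(\mV^t)^\top$ correctly produces $-\vg_{i_t}(\vz_{i_t}^t)\vg_{i_t}(\vz_{i_t}^t)^\top + \vg_{i_t}(\vx^{t+1})\vg_{i_t}(\vx^{t+1})^\top$ rather than a sum of two rank-one terms. After checking that, the statement follows, and in the main proof of Theorem~\ref{thm:holder-IGN-1} this lemma is invoked together with the inductive hypothesis $\mH^t \succeq (\mu^2/2)\mI$ (which in particular makes both $\mH^t$ and $\mH^{t+1}$ non-singular) to conclude $\sigma_{\min}(\mI + (\mV^t)^\top\mG^t\mU^t) > 0$.
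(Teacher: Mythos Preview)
Your proposal is correct and follows essentially the same approach as the paper: verify $\mH^{t+1} = \mH^t + \mU^t(\mV^t)^\top$ from the definitions, then invoke the matrix determinant lemma (equivalently, your Schur-complement capacitance argument) to conclude that $\mI + (\mV^t)^\top\mG^t\mU^t$ is non-singular whenever $\mH^t$ and $\mH^{t+1}$ are. The only cosmetic difference is that the paper cites the determinant identity directly rather than the block-matrix factorization you describe.
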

\begin{proof}
    The recursion of $\mH^t$ and the definition of $\mU^t$ and $\mV^t$ imply
    \begin{align*}
        \mH^{t+1} = \mH^t - \vg_{i_t}(\vz_{i_t}^t)\vg_{i_t}(\vz_{i_t}^t)^\top + \vg_{i_t}(\vx^{t+1})\vg_{i_t}(\vx^{t+1})^\top = \mH^t + \mU^t{\mV^t}^\top.
    \end{align*}
    Since we assume matrices $\mH^t$ and $\mH^{t+1}$ are non-singular, applying the matrix determinant lemma \cite[Section 9.1.2]{petersen2008matrix} on above equation leads to
    \begin{align*}
        \det(\mH^{t+1})=\det(\mH^t + \mU^t{\mV^t}^\top) = \det(\mI + {\mV^t}^\top(\mH^t)^{-1}\mU^t)\det(\mH^t).
    \end{align*}
    Then the definition $\mG^t={\mH^t}^{-1}$ implies
    \begin{align*}
        \det(\mI + {\mV^t}^\top\mG^t\mU^t) = \det(\mI + {\mV^t}^\top{\mH^t}^{-1}\mU^t) \neq 0
    \end{align*}
    which finish the proofs.
\end{proof}

Then we show the non-singular assumption on $\{\mH^j\}_{j=0}^t$ can upper bound the distance $\Norm{\vx^{t+1} - \vx^*}$.

\begin{lem}\label{le:main-iter-1} 
Under Assumptions \ref{asm:L-f} and \ref{asm:holder-g}, we
assume matrices $\{\mH^j\}_{j=0}^t$ are non-singular and run \IGN~(Algorithm \ref{alg:IGN-1}), then it holds
\begin{align*}
    \Norm{\vx^{t+1} - \vx^*}
     \leq \frac{L_f\fH_\nu}{1 
     +\nu}\Norm{\mG^t}\sum_{i=1}^n\Norm{\vz_i^t-\vx^*}^{1+\nu},
    \end{align*}
    where $\mG^t=\left(\mH^t\right)^{-1}$.
\end{lem}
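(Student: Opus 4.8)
The plan is to exploit the optimality characterization of $\vx^{t+1}$ as the exact minimizer of the surrogate $\psi(\vx)=\sum_i\psi_i(\vx)$ in~\eqref{eq:prob-ign}, which under the non-singularity assumption on $\mH^t$ gives the closed form $\vx^{t+1}=\mG^t\vu^t$ with $\mG^t=(\mH^t)^{-1}$, matching~\eqref{eq:ign-update-0}. First I would write out $\vx^{t+1}-\vx^*$ by inserting $\vx^*$ into the closed form: since $\mH^t\vx^{t+1}=\vu^t=\sum_{i=1}^n\big(\vg_i(\vz_i^t)^\top\vz_i^t-f_i(\vz_i^t)\big)\vg_i(\vz_i^t)$ and $\mH^t\vx^*=\sum_{i=1}^n\vg_i(\vz_i^t)\vg_i(\vz_i^t)^\top\vx^*$, subtracting yields
\begin{align*}
\mH^t(\vx^{t+1}-\vx^*)=\sum_{i=1}^n\vg_i(\vz_i^t)\big(\vg_i(\vz_i^t)^\top(\vz_i^t-\vx^*)-f_i(\vz_i^t)\big).
\end{align*}
Now I would use $f_i(\vx^*)=0$ (the $i$-th component of $\vf(\vx^*)=\vzero$) to rewrite the scalar factor as $-\big(f_i(\vx^*)-f_i(\vz_i^t)-\vg_i(\vz_i^t)^\top(\vx^*-\vz_i^t)\big)$, which is exactly the linearization residual controlled by Lemma~\ref{le:power-bound}.

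The key steps, in order: (i) derive the identity above for $\mH^t(\vx^{t+1}-\vx^*)$; (ii) premultiply by $\mG^t=(\mH^t)^{-1}$ and take norms, using $\Norm{\mG^t\vw}\le\Norm{\mG^t}\Norm{\vw}$ and the triangle inequality over the sum; (iii) bound each summand by $\Norm{\vg_i(\vz_i^t)}$ times the linearization residual; (iv) apply Lemma~\ref{le:b-grad} to get $\Norm{\vg_i(\vz_i^t)}\le L_f$ and Lemma~\ref{le:power-bound} to get $\big|f_i(\vx^*)-f_i(\vz_i^t)-\vg_i(\vz_i^t)^\top(\vx^*-\vz_i^t)\big|\le\tfrac{\fH_\nu}{1+\nu}\Norm{\vx^*-\vz_i^t}^{1+\nu}$; (v) combine to obtain
\begin{align*}
\Norm{\vx^{t+1}-\vx^*}\le\Norm{\mG^t}\sum_{i=1}^n L_f\cdot\frac{\fH_\nu}{1+\nu}\Norm{\vz_i^t-\vx^*}^{1+\nu},
\end{align*}
which is the claimed bound. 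One technical point to check is that Lemma~\ref{le:power-bound} as stated bounds the signed quantity $f_i(\vy)-f_i(\vx)-\vg_i(\vx)^\top(\vy-\vx)$ from above; applying it with the roles of $\vx,\vy$ swapped (i.e. to both $(\vx,\vy)=(\vz_i^t,\vx^*)$ and $(\vx,\vy)=(\vx^*,\vz_i^t)$, noting the gradient anchor differs) together with the symmetry $\Norm{\vz_i^t-\vx^*}=\Norm{\vx^*-\vz_i^t}$ gives the absolute-value bound needed here; alternatively one re-runs the short integral argument of Lemma~\ref{le:power-bound} to get the two-sided estimate directly.

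I expect the only real subtlety — hardly an obstacle — is bookkeeping the residual with the correct base point: the linearization in $\psi_i$ is anchored at $\vz_i^t$, so the residual that appears is $f_i(\vx^*)-f_i(\vz_i^t)-\vg_i(\vz_i^t)^\top(\vx^*-\vz_i^t)$, whose magnitude is controlled by a single application of Lemma~\ref{le:power-bound} (with $\vx=\vz_i^t$, $\vy=\vx^*$, up to the absolute value remark above). Everything else is the standard ``exact solve of the Gauss--Newton subproblem plus Hölder Taylor remainder'' estimate, and the non-singularity of $\{\mH^j\}_{j=0}^t$ is used only to guarantee $\vx^{j+1}=\mG^j\vu^j$ is well-defined for all $j\le t$ and in particular that $\mG^t=(\mH^t)^{-1}$ exists, so that step (ii) is legitimate.
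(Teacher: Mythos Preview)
Your proposal is correct and follows essentially the same route as the paper: derive the identity $\vx^{t+1}-\vx^*=\mG^t\sum_i\big(f_i(\vx^*)-f_i(\vz_i^t)-\vg_i(\vz_i^t)^\top(\vx^*-\vz_i^t)\big)\vg_i(\vz_i^t)$, then bound via $\Norm{\mG^t}$, Lemma~\ref{le:b-grad} and Lemma~\ref{le:power-bound}. You are in fact more careful than the paper about the one-sided nature of Lemma~\ref{le:power-bound}; the paper simply invokes it without comment, whereas you correctly observe that the two-sided bound follows immediately from the integral argument in its proof.
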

\begin{proof}
    Subtracting the term $\vx^*$ on both sides of equation (\ref{eq:ign-update-0}), we have
    \begin{align*}
    \vx^{t+1} - \vx^*
    & = \mG^t\left(\sum_{i=1}^n \vg_i(\vz_i^t)\vg_i(\vz_i^t)^\top(\vz_i^t-\vx^*) - \sum_{i=1}^n f_i(\vz_i^t)\vg_i(\vz_i^t)\right)\\
    & = \mG^t\left(\sum_{i=1}^n \vg_i(\vz_i^t)\vg_i(\vz_i^t)^\top(\vz_i^t-\vx^*) - \sum_{i=1}^n f_i(\vz_i^t)\vg_i(\vz_i^t) + \sum_{i=1}^n f_i(\vx^*)\vg_i(\vz_i^t)\right)\\
    & = \mG^t\sum_{i=1}^n\left( \vg_i(\vz_i^t)^\top(\vz_i^t-\vx^*) - f_i(\vz_i^t) + f_i(\vx^*)\right)\vg_i(\vz_i^t).
    \end{align*}

    Taking the norm on the both sides of above results, we have
    \begin{align*}
    \Norm{\vx^{t+1} - \vx^*}
    & = \Norm{\mG^t\sum_{i=1}^n\left( \vg_i(\vz_i^t)^\top(\vz_i^t-\vx^*) - f_i(\vz_i^t) + f_i(\vx^*)\right)\vg_i(\vz_i^t)}\\
    & \leq \Norm{\mG^t}\Norm{\sum_{i=1}^n\left( \vg_i(\vz_i^t)^\top(\vz_i^t-\vx^*) - f_i(\vz_i^t) + f_i(\vx^*)\right)\vg_i(\vz_i^t)}\\
    & \leq \frac{L_f\fH_\nu}{1 + \nu}\Norm{\mG^t}\sum_{i=1}^n\Norm{\vz_i^t-\vx^*}^{1+\nu}
    \end{align*}    
    where the first inequality comes from the property of matrix norm, the second inequality is based on Lemma \ref{le:power-bound} and \ref{le:b-grad}.
\end{proof}

We split the results of Theorem \ref{thm:holder-IGN-1} into two parts (i.e., Theorem \ref{thm:holder-IGN-1-1} and \ref{thm:holder-IGN-1-2}) and provide their proofs as follows.
Our analysis is based on the properties of our the auxiliary sequence constructed in Section \ref{sec:series}.

\begin{thm}
\label{thm:holder-IGN-1-1}
    Under the Assumption \ref{asm:L-f}, \ref{asm:holder-g} and \ref{asm:b-J-sing}, we run \IGN~(Algorithm \ref{alg:IGN-1}) with initialization $\vx^0\in\BR^d$ and~$\mH^0=\mJ(\vx^0)^\top\mJ(\vx^0)$ such that
    \begin{align*}
    \Norm{\vx^0 - \vx^*}\leq \left(\frac{\mu^2}{4L_f\fH_\nu n}\right)^{{1}/{\nu}},
    \end{align*} 
    then it holds 
    \begin{align*}
    \sigma_{\min}(\mI + (\mV^t)^\top (\mH^t)^{-1}\mU^t) > 0,\quad
    \mH^t \succeq \frac{\mu^2}{2}\mI\quad
    \text{and}\quad
    \Norm{\vx^{t}-\vx^*}\leq a_{t+1}(n, \nu)\Norm{\vx^0-\vx^*}
    \end{align*}
    for all $t \geq 0$, where sequence $\{a_t(n, \nu)\}_{t\geq 0}$ is defined in equation (\ref{eq:series}). 
\end{thm}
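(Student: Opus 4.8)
The plan is to establish all three claims at once by a single induction on $t$, carrying the inductive predicate $P(t)$: for every $s\le t$ one has $\mH^s\succeq(\mu^2/2)\mI$, $\mG^s=(\mH^s)^{-1}$ and $\Norm{\vx^s-\vx^*}\le a_s(n,\nu)\Norm{\vx^0-\vx^*}$, and for every $s<t$ the matrix $\mI+(\mV^s)^\top\mG^s\mU^s$ is nonsingular (so that the Sherman--Morrison--Woodbury step of Line~\ref{line:update-G} is legitimate through time $t$ and the iterate $\vx^{t+1}=\mG^t\vu^t$ genuinely coincides with~\eqref{eq:ign-update-0}). Once $P(t)$ holds for all $t$, the theorem follows after the trivial reindexing that uses $a_0(n,\nu)=1$. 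For the base case $t=0$: since $\mH^0=\mJ(\vx^0)^\top\mJ(\vx^0)$, Lemma~\ref{le:lb-Ht-new-1} together with the initialization bound (which is exactly $2L_f\fH_\nu n\Norm{\vx^0-\vx^*}^\nu\le\mu^2/2$) gives $\sigma_{\min}(\mH^0)\ge\mu^2/2$; the distance bound holds with equality because $a_0(n,\nu)=1$; and $\mG^0=(\mH^0)^{-1}$ by hypothesis.

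The key preliminary observation is the cyclic bookkeeping that motivates the sequence $\{a_t(n,\nu)\}$: since $\vz_i^0=\vx^0$ for all $i$ and $i_s=s\%n+1$, at any time $t$ the multiset $\{\vz_1^t,\dots,\vz_n^t\}$ consists of $\vx^{\max(1,\,t-n+1)},\dots,\vx^t$ together with $\max(n-t,0)$ copies of $\vx^0$. Hence, under $P(t)$, every $\vz_i^t$ lies within distance $\Norm{\vx^0-\vx^*}$ of $\vx^*$, and
\[
\sum_{i=1}^n\Norm{\vz_i^t-\vx^*}^{1+\nu}\ \le\ 2(1+\nu)\,n\,a_{t+1}(n,\nu)\,\Norm{\vx^0-\vx^*}^{1+\nu},
\]
which is precisely the recursion of Definition~\ref{def:series}: for $t\le n-1$ the left side is at most $\big(\sum_{j=0}^{t}a_j(n,\nu)^{1+\nu}+n-t-1\big)\Norm{\vx^0-\vx^*}^{1+\nu}$ (the middle branch), and for $t\ge n$ it is at most $\big(\sum_{j=t-n+1}^{t}a_j(n,\nu)^{1+\nu}\big)\Norm{\vx^0-\vx^*}^{1+\nu}$ (the last branch).

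For the inductive step $P(t)\Rightarrow P(t+1)$ I would proceed in a fixed order to avoid circularity. First, since $\mH^0,\dots,\mH^t$ are nonsingular and $\mG^t=(\mH^t)^{-1}$, Lemma~\ref{le:main-iter-1} applies and gives $\Norm{\vx^{t+1}-\vx^*}\le\frac{L_f\fH_\nu}{1+\nu}\Norm{\mG^t}\sum_{i}\Norm{\vz_i^t-\vx^*}^{1+\nu}$; inserting $\Norm{\mG^t}=1/\sigma_{\min}(\mH^t)\le2/\mu^2$ and the displayed sum bound, the right-hand side simplifies to $\big(4L_f\fH_\nu n\Norm{\vx^0-\vx^*}^\nu/\mu^2\big)\,a_{t+1}(n,\nu)\Norm{\vx^0-\vx^*}$, which is at most $a_{t+1}(n,\nu)\Norm{\vx^0-\vx^*}$ by the initialization condition (and hence at most $\Norm{\vx^0-\vx^*}$, using $a_{t+1}(n,\nu)\le1$ from Lemma~\ref{le:s-mono-000}). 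Second, now that $\vz_1^{t+1},\dots,\vz_n^{t+1}$ all lie within distance $\Norm{\vx^0-\vx^*}$ of $\vx^*$, we get $\sum_i\Norm{\vz_i^{t+1}-\vx^*}^\nu\le n\Norm{\vx^0-\vx^*}^\nu$, so Lemma~\ref{le:lb-Ht-new-1} gives $\sigma_{\min}(\mH^{t+1})\ge\mu^2-2L_f\fH_\nu n\Norm{\vx^0-\vx^*}^\nu\ge\mu^2/2$. Third, with $\mH^t$ and $\mH^{t+1}$ both nonsingular, Lemma~\ref{le:via-update} yields $\sigma_{\min}(\mI+(\mV^t)^\top\mG^t\mU^t)>0$, and then the Sherman--Morrison--Woodbury identity propagates $\mG^t=(\mH^t)^{-1}$ to $\mG^{t+1}=(\mH^{t+1})^{-1}$, completing $P(t+1)$.

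I expect the bulk of the effort — and the only genuinely delicate part — to be the bookkeeping: proving that the cyclic update really produces the multiset $\{\vz_i^t\}$ above for every $t$ (the transitional range $t\le n$, where $\vx^0$ still occurs with multiplicity $n-t$, needs its own accounting) and then matching $\sum_i\Norm{\vz_i^t-\vx^*}^{1+\nu}$ term-by-term against the piecewise definition of $a_{t+1}(n,\nu)$ so that the constant lands exactly on $2(1+\nu)n$. A second, milder subtlety is getting the order of the inductive step right: one must bound $\Norm{\vx^{t+1}-\vx^*}$ using only $\mH^t$ (via Lemma~\ref{le:main-iter-1}) before one is entitled to invoke Lemma~\ref{le:lb-Ht-new-1} at time $t+1$, and only afterward may one apply Lemma~\ref{le:via-update} and the Woodbury update. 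Everything else reduces to substituting the initialization inequality $4L_f\fH_\nu n\Norm{\vx^0-\vx^*}^\nu\le\mu^2$ and the elementary bounds $a_t(n,\nu)\le1$ of Lemma~\ref{le:s-mono-000}.
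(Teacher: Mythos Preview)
Your proposal is correct and follows essentially the same approach as the paper: both proofs use induction together with Lemma~\ref{le:lb-Ht-new-1} to control $\sigma_{\min}(\mH^t)$, Lemma~\ref{le:main-iter-1} to propagate the distance bound, the cyclic bookkeeping of $\{\vz_i^t\}$ to match the recursion for $a_{t+1}(n,\nu)$, and finally Lemma~\ref{le:via-update} for the nonsingularity of $\mI+(\mV^t)^\top\mG^t\mU^t$. The only difference is organizational---the paper splits the induction into three separate cases ($t=0$, $1\le t\le n-1$, $t\ge n$) rather than carrying a single unified predicate $P(t)$---and your explicit attention to the ordering within the inductive step (bounding $\vx^{t+1}$ via $\mH^t$ \emph{before} establishing $\mH^{t+1}\succeq(\mu^2/2)\mI$) is a mild clarity improvement over the paper's presentation.
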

\begin{proof}

We first show
\begin{align}\label{eq:Hxtxstar0}
\mH^t \succeq \frac{\mu^2}{2}\mI\qquad
\text{and}\qquad
\Norm{\vx^{t}-\vx^*}\leq a_{t+1}(n, \nu)\Norm{\vx^0-\vx^*}
\end{align}
holds for all $t\geq 0$. We split the proof of results (\ref{eq:Hxtxstar0}) into the following three parts.

\textbf{Part I:} For $t=0$, 
the initialization and the fact $a_0=1$ leads to 
\begin{align*}
 \Norm{\vx^0 - \vx^*} = a_0(n, \nu)\Norm{\vx^0 - \vx^*}.
\end{align*}

\textbf{Part II:} For all $t=0,\cdots, n-1$, we use induction to prove the results of 
\begin{align}\label{eq:Hxtxstar}
\mH^t \succeq \frac{\mu^2}{2}\mI\qquad
\text{and}\qquad
\Norm{\vx^{t+1}-\vx^*}\leq a_{t+1}(n, \nu)\Norm{\vx^0-\vx^*}.
\end{align}
For the induction base, we can apply Lemma \ref{le:lb-Ht-new-1} to verify
\begin{align*}
    \sigma_{\min}(\mH^0) &\geq  \mu^2 - 2L_f\fH_\nu\sum_{i=1}^n \Norm{\vz_i^0 - \vx^*}^\nu\\
    & = \mu^2 - 2L_f\fH_\nu\sum_{i=1}^n \Norm{\vx^0 - \vx^*}^\nu\\
    & \geq \mu^2 - 2L_f\fH_\nu n \frac{\mu^2}{4L_f\fH_\nu n}\\
    & = \mu^2 - \frac{\mu^2}{2}\\
    & = \frac{\mu^2}{2}.
\end{align*}

This implies
\begin{align}\label{eq:boundG0}
\mH^0 \succeq \frac{\mu^2}{2} \qquad \text{and}\qquad
\Norm{\mG^0} = \Norm{(\mH^0)^{-1}} \leq \frac{2}{\mu^2}.
\end{align}
According to Lemma \ref{le:main-iter-1}, we have
\begin{align*}
    \Norm{\vx^1 - \vx^*} 
    &\leq \frac{L_f\fH_\nu}{1+\nu}\Norm{\mG^0}\sum_{i=1}^n\Norm{\vz_i^0-\vx^*}^{1+\nu}\\
    & \leq \frac{L_f\fH_\nu}{1+\nu}\cdot\frac{2}{\mu^2}\cdot\sum_{i=1}^n\Norm{\vz_i^0-\vx^*}^{1+\nu}\\
    & = \frac{L_f\fH_\nu}{1+\nu}\cdot\frac{2}{\mu^2}\cdot n\Norm{\vx^0-\vx^*}^{1+\nu}\\
    & \leq \frac{nL_f\fH_\nu}{1+\nu}\cdot\frac{2}{\mu^2}\cdot\frac{\mu^2}{4L_f\fH_\nu n}\Norm{\vx^0-\vx^*}\\
    & = \frac{1}{2(1+\nu)}\Norm{\vx^0-\vx^*}\\
    & = a_1(n, \nu)\Norm{\vx^0 - \vx^*},
\end{align*}
where the first inequality is based on equation (\ref{eq:boundG0}) and the second inequality is based on initial condition. 
Therefore, the induction base holds

For the induction step, we assume
\begin{align*}
    \mH^j \succeq \frac{\mu^2}{2} \mI \qquad\text{and}\qquad\Norm{\vx^{j+1} - \vx^*} \leq a_{j+1}(n, \nu)\Norm{\vx^0 - \vx^*}
\end{align*}
hold for all $j = 2, \cdots, t-1$ such that $t \leq n - 1$. Therefore, the update (\ref{eq:update-1}) means
\begin{align}\label{eq:vziii}
    \vz_i^{t}=
    \begin{cases}
        \vx^i,~~~~&1 \leq i\leq t,\\
        \vx^0,~~~~&t < i \leq n. 
    \end{cases}
\end{align}
The induction hypothesis leads to
\begin{align*}
    \Norm{\vx^j-\vx^*}^{\nu}\leq (a_j(n, \nu))^{\nu}\Norm{\vx^0-\vx^*}^{\nu} \leq  \Norm{\vx^0-\vx^*}^{\nu}\leq \frac{\mu^2}{4L_f\fH_\nu n}, 
\end{align*}
for $j = 1, \cdots, t-1$, where the second is based on Lemma \ref{le:s-mono-000} and the third comes from the initial condition. 
Combining with the result of (\ref{eq:vziii}), we achive
\begin{align*}
    \Norm{\vz_i^{t} - \vx^*}^{\nu} \leq \frac{\mu^2}{4L_f\fH_\nu n}.
\end{align*}

According to Lemma \ref{le:lb-Ht-new-1}, we have
\begin{align*}
    \sigma_{\min}(\mH^{t}) &\geq  \mu^2 - 2L_f\fH_\nu\sum_{i=1}^n \Norm{\vz_i^{t} - \vx^*}^\nu\\
    & \geq \mu^2 - 2L_f\fH_\nu n \frac{\mu^2}{4L_f\fH_\nu n}\\
    & = \mu^2 - \frac{\mu^2}{2}\\
    & = \frac{\mu^2}{2},
\end{align*}

where the second inequality comes from the initial condition.
Therefore, we have
\begin{align*}
    \mH^{t}  \succeq \frac{\mu^2}{2}\mI \qquad \text{and}\qquad
    \Norm{\mG^{t}}  = \Norm{(\mH^{t})^{-1}} \leq \frac{2}{\mu^2}.
\end{align*}
According to Lemma \ref{le:main-iter-1}, we have
\begin{align*}
    \Norm{\vx^{t+1} - \vx^*} 
    & \leq \frac{L_f\fH_\nu }{1+\nu}\Norm{\mG^{t}}\sum_{i=1}^n\Norm{\vz_i^{t}-\vx^*}^{1+\nu}\\
    & \leq \frac{L_f\fH_\nu }{1+\nu}\frac{2}{\mu^2}\sum_{i=1}^n\Norm{\vz_i^{t}-\vx^*}^{1+\nu}\\
    & \leq \frac{2L_f\fH_\nu}{(1+\nu)\mu^2}\left(\sum_{j=1}^{t}\Norm{\vx^{j}-\vx^*}^{1+\nu} + (n - t)\Norm{\vx^0 - \vx^*}^{1+\nu}\right)\\
    & \leq \frac{2L_f\fH_\nu }{(1+\nu)\mu^2}\left(\sum_{j=1}^{t} (a_j(n, \nu))^{1+\nu}\Norm{\vx^0 - \vx^*}^{1+\nu} + (n - t) \Norm{\vx^0 - \vx^*}^{1+\nu}\right)\\
    & \leq \frac{2L_f\fH_\nu }{(1+\nu)\mu^2}\frac{\mu^2}{4L_f\fH_\nu n}\left(\sum_{j=1}^{t}(a_j(n, \nu))^{1+\nu} + n - t\right)\Norm{\vx^0 - \vx^*}\\
    & = \frac{1}{2(1+\nu)n}\left(\sum_{j=1}^{t}(a_j(n, \nu))^{1+\nu} + n - t\right)\Norm{\vx^0 - \vx^*}\\
    & = \frac{1}{2(1+\nu)n}\left(\sum_{j=0}^{t}(a_j(n, \nu))^{1+\nu} + n - t - 1\right)\Norm{\vx^0 - \vx^*}\\
    & = a_{t+1}(n, \nu)\Norm{\vx^0 - \vx^*},
\end{align*}
where the last equality comes from the fact $a_0(n, \nu)=1$. 
Therefore, we finish the induction.

\textbf{Part III:} For all $t \geq n$, we use induction to prove 
\begin{align*}
\mH^t\succeq(\mu^2/2)\mI\qquad\text{and}\qquad\Norm{\vx^{t+1} - \vx^*}\leq a_{t+1}(n, \nu)\Norm{\vx^0 - \vx^*}.    
\end{align*}
For the induction base, we can verify that it holds (from the result of Part II) 
\begin{align*}
\mH^j \succeq \frac{\mu^2}{2}\mI \qquad \text{for all~~} j=0,\dots,n - 1,
\end{align*}
and
\begin{align*}
\Norm{\vx^j-\vx^*}\leq a_j(n, \nu)\Norm{\vx^0-\vx^*} \qquad \text{for all~~} j=1,\dots,n.
\end{align*}
Then we have
\begin{align*}
    \Norm{\vx^j-\vx^*}^{\nu}\leq (a_j(n, \nu))^{\nu}\Norm{\vx^0-\vx^*}^{\nu} \leq \Norm{\vx^0-\vx^*}^{\nu} \leq \frac{\mu^2}{4L_f\fH_\nu n}, \quad \text{for all~~} j=1,\dots,n,
\end{align*}
where the second inequality is based on Lemma \ref{le:s-mono-000} and the third inequality is based on the initial condition.

From Eq.~\ref{eq:update-1}, we have
\begin{align*}
    \vz_i^{n}=\vx^i \qquad\text{for all~~} i\in[n].
\end{align*}

Therefore, we have
\begin{align*}
    \Norm{\vz_i^{n} - \vx^*}^{\nu} \leq \frac{\mu^2}{4L_f\fH_\nu n}, \qquad\text{for all~~} i\in[n].
\end{align*}
According to Lemma \ref{le:lb-Ht-new-1}, we have
\begin{align*}
    \sigma_{\min}(\mH^{n}) &\geq  \mu^2 - 2L_f\fH_\nu\sum_{i=1}^n \Norm{\vz_i^{n} - \vx^*}^\nu\\
    & \geq \mu^2 - 2L_f\fH_\nu n \frac{\mu^2}{4L_f\fH_\nu n}\\
    & \geq \mu^2 - \frac{\mu^2}{2} = \frac{\mu^2}{2},
\end{align*}

which implies
\begin{align*}
    \mH^{n}  \succeq \frac{\mu^2}{2}\mI \qquad\text{and}\qquad
    \Norm{\mG^{n}}  = \Norm{(\mH^{n})^{-1}} \leq \frac{2}{\mu^2}.
\end{align*}
According to Lemma \ref{le:main-iter-1}, we have
\begin{align*}
    \Norm{\vx^{n+1} - \vx^*}
    & \leq \frac{L_f\fH_\nu }{1+\nu}\Norm{\mG^n}\sum_{i=1}^n\Norm{\vz_i^n-\vx^*}^{1+\nu}\\
    & \leq \frac{2L_f\fH_\nu }{(1+\nu)\mu^2}\sum_{i=1}^n\Norm{\vz_i^n-\vx^*}^{1+\nu}\\
    & \leq \frac{2L_f\fH_\nu }{(1+\nu)\mu^2}\left(\sum_{j=1}^n (a_j(n, \nu))^{1+\nu}\Norm{\vx^0 - \vx^*}^{1+\nu}\right)\\
    & \leq \frac{2L_f\fH_\nu }{(1+\nu)\mu^2}\frac{\mu^2}{4L_f\fH_\nu n}\left(\sum_{j=1}^{n}(a_j(n, \nu))^{1+\nu}\right)\Norm{\vx^0-\vx^*}\\
    & = \frac{1}{2(1+\nu)n}\left(\sum_{j=1}^{n}(a_j(n, \nu))^{1+\nu}\right)\Norm{\vx^0-\vx^*}\\
    & = a_{n+1}(n, \nu)\Norm{\vx^0-\vx^*}.
\end{align*}
Hence, we have shown the induction base holds. 

For the induction step, we assume
\begin{align*}
    \mH^j \succeq \frac{\mu^2}{2} \mI \qquad\text{and}\qquad\Norm{\vx^{j+1} - \vx^*} \leq a_{j+1}(n, \nu)\Norm{\vx^0 - \vx^*}
\end{align*}
holds for all $j = n+1, \cdots, t-1$ such that $t \geq n + 2$. Combining results of Part I and II, we have

\begin{align*}
\Norm{\vx^j-\vx^*}\leq a_j(n, \nu)\Norm{\vx^0-\vx^*} \qquad \text{for all}~~ j=0,\dots,t,
\end{align*}
which implies
\begin{align*}
    \Norm{\vx^j-\vx^*}^{\nu}\leq (a_j(n, \nu))^{\nu}\Norm{\vx^0-\vx^*}^{\nu} \leq \Norm{\vx^0-\vx^*}^{\nu} \leq \frac{\mu^2}{4L_f\fH_\nu n}, \qquad\text{for all}~~j=1,\dots,t,
\end{align*}
where the second inequality is based on Lemma \ref{le:s-mono-000} and the last inequality is based on the condition condition.

The update~(\ref{eq:update-1}) means the points $\{\vz_i^{t}\}_{i=1}^n$ can be written as $\{\vx^{t+1-n}, \cdots, \vx^{t}\}$, which implies
\begin{align*}
    \max\{\Norm{\vz_1^{t}-\vx^*}, \cdots, \Norm{\vz_n^{t}-\vx^*}\}=\max\{\Norm{\vx^{t+1-n}-\vx^*}, \cdots, \Norm{\vx^{t}-\vx^*}\}.
\end{align*}
Therefore, we have
\begin{align*}
    \Norm{\vz_i^{n} - \vx^*}^{\nu} \leq \frac{\mu^2}{4L_f\fH_\nu n} \qquad \text{for all}~~i=1,\dots,n.
\end{align*}

Combing with Lemma \ref{le:lb-Ht-new-1}, we have
\begin{align*}
    \sigma_{\min}(\mH^{t}) &\geq  \mu^2 - 2L_f\fH_\nu\sum_{i=1}^n \Norm{\vz_i^{t} - \vx^*}^\nu\\
    & \geq \mu^2 - 2L_f\fH_\nu n \frac{\mu^2}{4L_f\fH_\nu n}\\
    & = \mu^2 - \frac{\mu^2}{2} = \frac{\mu^2}{2}.
\end{align*}
Therefore, we achieve
\begin{align*}
    \mH^{t}  \succeq \frac{\mu^2}{2}\mI \qquad\text{and}\qquad
    \Norm{\mG^{t}} & = \Norm{(\mH^{t})^{-1}} \leq \frac{2}{\mu^2}.
\end{align*}
According to Lemma \ref{le:main-iter-1}, we have
\begin{align*}
    \Norm{\vx^{t+1} - \vx^*}
    & \leq \frac{L_f\fH_\nu }{1+\nu}\Norm{\mG^{t}}\sum_{i=1}^n\Norm{\vz_i^{t}-\vx^*}^{1+\nu}\\
    & \leq \frac{2L_f\fH_\nu }{(1+\nu)\mu^2}\sum_{i=1}^n\Norm{\vz_i^{t}-\vx^*}^{1+\nu}\\
    & \leq \frac{2L_f\fH_\nu }{(1+\nu)\mu^2}\left(\sum_{j=t-n+1}^{t} (a_j(n, \nu))^{1+\nu}\Norm{\vx^0 - \vx^*}^{1+\nu}\right)\\
    & \leq \frac{2L_f\fH_\nu }{(1+\nu)\mu^2}\left(\sum_{j=t-n+1}^{t} (a_j(n, \nu))^{1+\nu}\right)\Norm{\vx^0 - \vx^*}^{1+\nu}\\
    & \leq \frac{2L_f\fH_\nu }{(1+\nu)\mu^2}\frac{\mu^2}{4L_f\fH_\nu n}\left(\sum_{j=t-n+1}^{t+1}a_j(n, \nu)^{1+\nu}\right)\Norm{\vx^0 - \vx^*}\\
    & = \frac{1}{2(1+\nu)n}\left(\sum_{j=t-n+2}^{t+1}(a_j(n, \nu))^{1+\nu}\right)\Norm{\vx^0 - \vx^*}\\
    & = a_{t+1}(n, \nu)\Norm{\vx^0 - \vx^*}.
\end{align*}
Hence, we finish the induction.

Combining results of Part I, II and III completes the proof of (\ref{eq:Hxtxstar0}).

Since the non-singularity of $\mH^t$ and $\mH^{t+1}$ has been verified by result (\ref{eq:Hxtxstar0}), we can apply Lemma \ref{le:via-update} to achieve
\begin{align*}
    \sigma_{\min}(\mI + (\mV^t)^\top(\mH^t)^{-1}\mU^t) > 0.
\end{align*}
\end{proof}

\begin{thm}
\label{thm:holder-IGN-1-2}
We define the sequence $\{r_t\}_{t\geq 0}$ such that
\begin{align*}
    r_t\triangleq\begin{cases}
    \max\{\Norm{\vx^0 - \vx^*}, 1\},~~~~&t=0,\\[0.2cm]
    a_t(n, \nu)r_0,~~~~& t\geq 1,\\
\end{cases}
\end{align*}
where the sequence $\{a_t(n, \nu)\}_{t\geq 0}$ is defined by equation (\ref{eq:series}). 
Under the Assumptions \ref{asm:L-f}, \ref{asm:holder-g} and \ref{asm:b-J-sing}, running \IGN~(Algorithm \ref{alg:IGN-1}) with initial condition shown in Theorem \ref{thm:holder-IGN-1-1}, we have
    \begin{align}
    \label{eq:r_t_recursion}
    \Norm{\vx^t - \vx^*} \leq r_t
    \qquad\text{and}\qquad
    r_{t+1}\leq  c^{(1+\nu)^{\left(\floor{\frac{t}{n}}-1\right)}}r_t  
    \end{align}
    for all $t\geq n$, where
    \begin{align*}
    c = 1- \frac{1}{n}\left(1-\left(\frac{1}{2(1+\nu)}\right)^{1+\nu}\right).
    \end{align*}
\end{thm}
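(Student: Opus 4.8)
The plan is to reduce everything to the auxiliary sequence $\{a_t(n,\nu)\}_{t\geq 0}$ and then invoke the properties already established in Section~\ref{sec:series}. First I would observe that the bound $\Norm{\vx^t - \vx^*} \leq r_t$ follows almost immediately: Theorem \ref{thm:holder-IGN-1-1} gives $\Norm{\vx^t - \vx^*} \leq a_{t+1}(n,\nu)\Norm{\vx^0 - \vx^*}$ for all $t\geq 0$, and by Lemma \ref{le:s-mono} the sequence is non-increasing, so $a_{t+1}(n,\nu)\leq a_t(n,\nu)$; combined with $\Norm{\vx^0-\vx^*}\leq r_0$ (since $r_0 = \max\{\Norm{\vx^0-\vx^*},1\}$) this yields $\Norm{\vx^t-\vx^*}\leq a_t(n,\nu) r_0 = r_t$ for all $t\geq 1$, and the $t=0$ case is trivial. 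The reason for padding $r_0$ up to at least $1$ is so that the $(1+\nu)$-powers appearing in the sequence recursion stay controlled; this is a harmless cosmetic adjustment.

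For the second inequality in \eqref{eq:r_t_recursion}, the key point is that $r_{t+1}/r_t = a_{t+1}(n,\nu)/a_t(n,\nu)$ for all $t\geq 1$, so the claimed bound $r_{t+1}\leq c^{(1+\nu)^{(\floor{t/n}-1)}} r_t$ is exactly equivalent to
\begin{align*}
a_{t+1}(n,\nu) \leq c^{(1+\nu)^{(\floor{t/n}-1)}} a_t(n,\nu) \qquad \text{for all } t\geq n,
\end{align*}
with the very same constant $c = 1 - \tfrac{1}{n}\bigl(1 - (\tfrac{1}{2(1+\nu)})^{1+\nu}\bigr)$. But this is precisely the statement of Lemma \ref{le:super-con} (note that $\floor{(t-1)/n} = \floor{t/n}$ fails in general, so I would double-check the index bookkeeping here: Lemma \ref{le:super-con} writes the exponent as $(1+\nu)^{\floor{(t-1)/n}-1}$, and I would reconcile this with the $\floor{t/n}$ in the theorem statement, either by a shift in the range of $t$ or by noting the two floors agree except at multiples of $n$, where one checks the bound directly). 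So the proof is essentially: translate the $r_t$-recursion into an $a_t$-recursion, then cite Lemma \ref{le:super-con}.

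The main obstacle — really the only non-routine point — is the off-by-one reconciliation between $\floor{t/n}$ and $\floor{(t-1)/n}$ in the exponent, together with making sure the inequality $r_{t+1}/r_t = a_{t+1}/a_t$ is valid on the full range $t\geq n$ (it holds for $t\geq 1$ since both $r_{t+1}$ and $r_t$ are then $a_\bullet r_0$ with $r_0>0$, so this is fine). I would handle the floor discrepancy by checking: for $t$ not a multiple of $n$ we have $\floor{(t-1)/n} = \floor{t/n}$, so Lemma \ref{le:super-con} directly gives the claim; for $t = kn$ a multiple of $n$, $\floor{(t-1)/n} = k-1 = \floor{t/n} - 1$, and since $c\in(0,1)$ and $(1+\nu)^{k-2} \leq (1+\nu)^{k-1}$ we get $c^{(1+\nu)^{k-2}} \leq c^{(1+\nu)^{k-1}}$... wait, that inequality goes the wrong way, so at multiples of $n$ the bound with $\floor{t/n}-1$ in the exponent is actually \emph{weaker} than what Lemma \ref{le:super-con} provides, hence still true. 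Either way the theorem's stated bound follows, and I would simply spell out this one-line case distinction to close the argument.
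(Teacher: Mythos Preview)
Your approach is exactly the paper's: show $\Norm{\vx^t-\vx^*}\leq r_t$ via Theorem~\ref{thm:holder-IGN-1-1} and the definition of $r_t$, then translate $r_{t+1}\leq c^{(1+\nu)^{\lfloor t/n\rfloor-1}}r_t$ into the corresponding inequality for $a_t$ and cite Lemma~\ref{le:super-con}. The paper's proof is two lines and does precisely this.

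Your floor-reconciliation case analysis, however, lands on the wrong conclusion. At $t=kn$ you have $\lfloor t/n\rfloor-1=k-1$ while $\lfloor(t-1)/n\rfloor-1=k-2$, and since $c\in(0,1)$ this gives $c^{(1+\nu)^{k-1}}<c^{(1+\nu)^{k-2}}$. So the theorem's bound (exponent $k-1$) is \emph{stronger}, not weaker, than what the lemma \emph{statement} (exponent $k-2$) provides; your ``hence still true'' goes the wrong direction and the gap at multiples of $n$ is not closed by your argument.

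The actual fix is simpler: the \emph{proof} of Lemma~\ref{le:super-con} establishes the $\lfloor t/n\rfloor-1$ version directly (from $a_{t+1}\leq c\,a_t$ for $t\geq n$, then $a_{t+1}\leq c^{1+\nu}a_t$ for $t\geq 2n$, etc.), and the $\lfloor(t-1)/n\rfloor$ in the lemma statement is evidently a typo. The paper's proof of Theorem~\ref{thm:holder-IGN-1-2} accordingly cites Lemma~\ref{le:super-con} with the $\lfloor t/n\rfloor$ exponent and needs no case distinction. Your instinct to check the bookkeeping was right; the resolution is just to look at what the lemma's proof actually delivers rather than to argue from its (misprinted) statement.
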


\begin{proof}
    The definition of $\{r_t\}_{t\geq 0}$ leads to 
    \begin{align*}
        r_0 = \max\{\Norm{\vx^0 - \vx^*}, 1\} \geq \Norm{\vx^0 - \vx^*}.
    \end{align*}
    According to Theorem \ref{thm:holder-IGN-1-1}, we have 
    \begin{align*}
    \Norm{\vx^t - \vx^*} \leq a_{t}(n, \nu)\Norm{\vx^0 - \vx^*} \leq a_t(n, \nu)r_0 = r_t.
    \end{align*}
    According to Lemma \ref{le:super-con}, we have
    \begin{align*}
         a_{t+1}(n, \nu)\leq  c^{(1+\nu)^{\left(\floor{\frac{t}{n}}-1\right)}}a_t(n, \nu) \qquad \text{for all}~~t \geq n.
    \end{align*}
    Thus, achieve
    \begin{align*}
        r_{t+1} = a_{t+1}(n, \nu)r_0\leq  c^{(1+\nu)^{\left(\floor{\frac{t}{n}}-1\right)}}a_t(n, \nu)r_0 =c^{(1+\nu)^{\left(\floor{\frac{t}{n}}-1\right)}}r_t  \qquad \text{for all}~~t \geq n,
    \end{align*}
    where
    \begin{align*}
        c = 1- \frac{1}{n}\left(1-\left(\frac{1}{2(1+\nu)}\right)^{1+\nu}\right).
    \end{align*}
\end{proof}

Combining the results of Theorem \ref{thm:holder-IGN-1-1} and  \ref{thm:holder-IGN-1-2}, we finish the proof of Theorem \ref{thm:holder-IGN-1}.

\subsection{Proof of Corollary \ref{cor:ign1}}

\begin{proof}
According to Theorem \ref{thm:holder-IGN-1}, we have
\begin{align*}
r_{t+1}\leq  c^{(1+\nu)^{\left(\floor{\frac{t}{n}}-1\right)}}r_t
\qquad\text{with}\qquad
c = 1- \frac{1}{n}\left(1-\left(\frac{1}{2(1+\nu)}\right)^{1+\nu}\right).
\end{align*}
for all $\nu\in(0,1]$.
Noticing that the value of $c$ is monotonically decreasing according to $\nu$, we have
\begin{align*}
    1 - \frac{1}{2n} > c \geq 1 - \frac{15}{16n},
\end{align*}
which implies
\begin{align*}
    r_{t+1} \leq \Big(1 - \frac{1}{2n}\Big)^{(1+\nu)^{(\floor{t/n}-1)}}r_t
\end{align*}
for all $t\geq n$.

\end{proof}

\subsection{Proof of Corollary \ref{cor:qua-1}}

\begin{proof}
According to the definition of $\{r_t\}_{t\geq 0}$ and Theorem \ref{thm:holder-IGN-1-2}, 
we have
\begin{align*}
    r_0 = \max\{\Norm{\vx^0 - \vx^*},1\} \geq 1.
\end{align*}
Combining with Lemma \ref{le:n-qua-con}, we have
\begin{align*}
r_t = & a_t(n, \nu) r_0 \\
\leq & \frac{1}{2(1+\nu)}(a_{t-n}(n, \nu))^{1+\nu}r_0 \\
=& \frac{1}{2(1+\nu)r_0^{\nu}}(a_{t-n}(n, \nu))^{1+\nu}r_0^{1+\nu} \\
=& \frac{1}{2(1+\nu)r_0^\nu}r_{t-n}^{1+\nu} \\
\leq & \frac{1}{2(1+\nu)}r_{t-n}^{1+\nu}
\end{align*}
for all $t \geq n$.
This leads to 
\begin{align*}
    r_t \leq \frac{1}{4} r_{t-n}^{2}
\end{align*}
in the case of $\nu=1$.
\end{proof}

\section{The Convergence Analysis for \texttt{MB-IGN}}
\label{sec:con-mbign}

In this section, we analyze the convergence of \texttt{MB-IGN} (Algorithm \ref{alg:MB-IGN}). 
Most of the proof in this section can be achieved by follow the analysis in Section \ref{sec:con-ign} and we provide the details for the completeness.

\subsection{The Additional Lemma for Gram Matrix}

We provide the bound for the spectrum of matrix $\mH^t$ for \MBIGN~method as follows

\begin{lem}
\label{le:lb-Ht-new-k}
Under Assumptions \ref{asm:L-f}, \ref{asm:holder-g} and \ref{asm:b-J-sing}, running \MBIGN~(Algorithm \ref{alg:MB-IGN}) with batch size $k$, $\mH^0 = \mJ(\vx^0)^\top\mJ(\vx^0)$ and~$\mG^0=(\mH^0)^{-1}$ holds that
\begin{align*}
    \sigma_{\min}(\mH^t) \geq \mu^2 - 2kL_f\fH_\nu\sum_{i=1}^m \Norm{\vz_i^t - \vx^*}^\nu
\end{align*}
for all $t\geq 0$, where $m=\ceil{n/k}$.
\end{lem}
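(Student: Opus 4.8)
The plan is to mimic the proof of Lemma~\ref{le:lb-Ht-new-1}, replacing the per-component update by the mini-batch update so that the same telescoping/triangle-inequality argument goes through with an extra factor of $k$. First I would observe that the recursion for $\mH^t$ in \MBIGN\ (equation~(\ref{eq:recursion-k})) together with the initialization $\mH^0=\mJ(\vx^0)^\top\mJ(\vx^0)=\sum_{i=1}^n\vg_i(\vx^0)\vg_i(\vx^0)^\top$ implies that at any iteration $t$ we have the closed form
\begin{align*}
\mH^t=\sum_{i=1}^m\sum_{j\in\fS_i}\vg_j(\vz_i^t)\vg_j(\vz_i^t)^\top,
\end{align*}
because each block $\fS_i$ has had its contribution refreshed from $\vz_i^{t_{\rm old}}$ to the current $\vz_i^t$ exactly through the cyclic update~(\ref{eq:update-2}), with the period now $m=\ceil{n/k}$ rather than $n$. (Strictly one verifies this by a short induction on $t$ using~(\ref{eq:recursion-k}); the base case is the initialization, and the inductive step only changes the $i_t$-th block.)

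Next I would compare $\mH^t$ with the exact Gram matrix at the solution, $\mJ(\vx^*)^\top\mJ(\vx^*)=\sum_{i=1}^m\sum_{j\in\fS_i}\vg_j(\vx^*)\vg_j(\vx^*)^\top$, and bound the difference using the triangle inequality and the second inequality of Lemma~\ref{le:L-JJ}:
\begin{align*}
\Norm{\mH^t-\mJ(\vx^*)^\top\mJ(\vx^*)}
&\leq\sum_{i=1}^m\sum_{j\in\fS_i}\Norm{\vg_j(\vz_i^t)\vg_j(\vz_i^t)^\top-\vg_j(\vx^*)\vg_j(\vx^*)^\top}\\
&\leq\sum_{i=1}^m\sum_{j\in\fS_i}2L_f\fH_\nu\Norm{\vz_i^t-\vx^*}^\nu
\leq 2kL_f\fH_\nu\sum_{i=1}^m\Norm{\vz_i^t-\vx^*}^\nu,
\end{align*}
where the last step uses $|\fS_i|\leq k$ so each inner sum over $j\in\fS_i$ contributes at most $k$ identical terms in $\Norm{\vz_i^t-\vx^*}^\nu$. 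This yields the matrix inequality $\mH^t\succeq\mJ(\vx^*)^\top\mJ(\vx^*)-2kL_f\fH_\nu\big(\sum_{i=1}^m\Norm{\vz_i^t-\vx^*}^\nu\big)\mI$, and then Proposition~\ref{prop:b-JJ-sing} gives $\sigma_{\min}(\mJ(\vx^*)^\top\mJ(\vx^*))=\mu^2$, so
\begin{align*}
\sigma_{\min}(\mH^t)\geq\mu^2-2kL_f\fH_\nu\sum_{i=1}^m\Norm{\vz_i^t-\vx^*}^\nu,
\end{align*}
which is exactly the claim.

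There is no real obstacle here; the only thing to be slightly careful about is the bookkeeping for the final (possibly smaller) block $\fS_m$, which may have $|\fS_m|<k$ — but since we only need an upper bound on the number of summands per block, replacing $|\fS_i|$ by $k$ is harmless and gives the clean constant $2kL_f\fH_\nu$ stated in the lemma. The induction establishing the closed form for $\mH^t$ is routine and parallels the $\IGN$ case; I would state it briefly rather than spell it out. Thus the proof is essentially a one-to-one adaptation of the proof of Lemma~\ref{le:lb-Ht-new-1} with $n\rightsquigarrow m$ blocks and an extra factor $k$ from the block sizes.
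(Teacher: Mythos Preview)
Your proposal is correct and follows essentially the same approach as the paper's own proof: both establish the closed form $\mH^t=\sum_{i=1}^m\sum_{j\in\fS_i}\vg_j(\vz_i^t)\vg_j(\vz_i^t)^\top$, bound $\Norm{\mH^t-\mJ(\vx^*)^\top\mJ(\vx^*)}$ via the triangle inequality and Lemma~\ref{le:L-JJ}, use $|\fS_i|\leq k$ to extract the factor $k$, and then invoke Proposition~\ref{prop:b-JJ-sing}. If anything, your write-up is slightly more explicit about the induction for the closed form of $\mH^t$ and the handling of the possibly smaller last block~$\fS_m$, points the paper leaves implicit.
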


\begin{proof}
    We have
    \begin{align*}
        \Norm{\mH^t - \mJ(\vx^*)^\top\mJ(\vx^*)} &= \Norm{\sum_{i=1}^m\sum_{j\in\fS_i}\vg_j(\vz_i^t)\vg_j(\vz_i^t)^\top - \sum_{i=1}^m\sum_{j\in\fS_i}\vg_j(\vx^*)\vg_j(\vx^*)^\top}\\
        & \leq \sum_{i=1}^m\sum_{j\in\fS_i}\Norm{\vg_j(\vz_i^t)\vg_j(\vz_i^t)^\top - \vg_j(\vx^*)\vg_j(\vx^*)^\top}\\
        & \leq \sum_{i=1}^m 2|\fS_i|L_f\fH_\nu\Norm{\vz_i^t - \vx^*}^\nu\\
        & \leq \sum_{i=1}^m 2kL_f\fH_\nu\Norm{\vz_i^t - \vx^*}^\nu,
    \end{align*}
    where the first inequality comes from the triangle inequality and the second inequality is based on Lemma \ref{le:L-JJ}. 
    Thus, we have
     \begin{align*}
         \mH^{t}-\mJ(\vx^*)^{\top}\mJ(\vx^*)\succeq - \sum_{i=1}^m 2kL_f\fH_\nu\Norm{\vz_i^t - \vx^*}^\nu \cdot\mI, 
     \end{align*}
    which implies that
    \begin{align*}
        \sigma_{\min}(\mH^t) \geq \sigma_{\min}(\mJ(\vx^*)^\top\mJ(\vx^*)) - \sum_{i=1}^m 2kL_f\fH_\nu\Norm{\vz_i^t - \vx^*}^\nu = \mu^2 -  2kL_f\fH_\nu\sum_{i=1}^m\Norm{\vz_i^t - \vx^*}^\nu,
    \end{align*}
    where the last step is based on Proposition \ref{prop:b-JJ-sing}.
\end{proof}

\subsection{Proof of Theorem \ref{thm:Group-1}}
\label{sec:proof-k}
Similarly, we then show the update
\begin{align*}
\mG^{t+1} =\mG^{t} - \mG^{t}\mU^t(\mI + (\mV^t)^\top\mG^{t}\mU^t)^{-1}(\mV^t)^\top\mG^{t}    
\end{align*}
in \MBIGN~method (Line \ref{line:update-G-k} of Algorithm \ref{alg:MB-IGN}) is well-defined if the matrices $\mH^t$ and $\mH^{t+1}$ are non-singular.

\begin{lem}
\label{le:via-update-k}
    Following the setting of Theorem \ref{thm:Group-1}, if the matrices $\mH^t$ and $\mH^{t+1}$ are non-singular, then the matrix $\mI + {\mV^t}^\top\mG^{t}\mU^t$ is also non-singular, where
    \begin{align*}
\begin{cases}    
\mU^t = \Big[- \vg_{j_1}(\vz_{i_t}^t),~~ 
    \vg_{j_1}(\vx^{t+1}),~\cdots~,~
    - \vg_{j_{|\fS_{i_t}|}}(\vz_{i_t}^t),~~
    \vg_{j_{|\fS_{i_t}|}}(\vx^{t+1})
\Big],\\[0.2cm]
\mV^t = \Big[\vg_{j_1}(\vz_{i_t}^t),~~ 
    \vg_{j_1}(\vx^{t+1}),~\cdots~,~
    \vg_{j_{|\fS_{i_t}|}}(\vz_{i_t}^t),~~
    \vg_{j_{|\fS_{i_t}|}}(\vx^{t+1})
\Big], \quad i_t ={t\%m} + 1,
\end{cases}
\end{align*}
\end{lem}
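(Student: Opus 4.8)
The plan is to mirror the proof of Lemma~\ref{le:via-update} verbatim, replacing the rank-one updates by the rank-$2|\fS_{i_t}|$ update used in \MBIGN. First I would observe that the recursion for $\mH^t$ in Algorithm~\ref{alg:MB-IGN} can be rewritten compactly in terms of $\mU^t$ and $\mV^t$: since
\begin{align*}
\mH^{t+1} = \mH^t - \sum_{j\in\fS_{i_t}}\vg_j(\vz_{i_t}^t)\vg_j(\vz_{i_t}^t)^\top + \sum_{j\in\fS_{i_t}}\vg_j(\vx^{t+1})\vg_j(\vx^{t+1})^\top,
\end{align*}
and $\mU^t(\mV^t)^\top = \sum_{j\in\fS_{i_t}}\big(-\vg_j(\vz_{i_t}^t)\vg_j(\vz_{i_t}^t)^\top + \vg_j(\vx^{t+1})\vg_j(\vx^{t+1})^\top\big)$ by the block-column structure of the two matrices, we get exactly $\mH^{t+1} = \mH^t + \mU^t(\mV^t)^\top$.

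Next I would invoke the matrix determinant lemma \cite[Section 9.1.2]{petersen2008matrix} in its general (non-rank-one) form: for any invertible $\mH^t$,
\begin{align*}
\det(\mH^t + \mU^t(\mV^t)^\top) = \det\big(\mI + (\mV^t)^\top(\mH^t)^{-1}\mU^t\big)\det(\mH^t),
\end{align*}
where now $\mI$ denotes the $2|\fS_{i_t}|\times 2|\fS_{i_t}|$ identity. Since $\mH^t$ and $\mH^{t+1}$ are assumed non-singular, the left-hand side and $\det(\mH^t)$ are both nonzero, hence $\det(\mI + (\mV^t)^\top(\mH^t)^{-1}\mU^t)\neq 0$. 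Substituting the definition $\mG^t = (\mH^t)^{-1}$ gives $\det(\mI + (\mV^t)^\top\mG^t\mU^t)\neq 0$, i.e.\ the matrix inverted in Line~\ref{line:update-G-k} is well-defined, which is what the lemma asserts.

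I do not expect any real obstacle here: the only point requiring a moment of care is verifying the identity $\mU^t(\mV^t)^\top = \sum_{j\in\fS_{i_t}}(-\vg_j(\vz_{i_t}^t)\vg_j(\vz_{i_t}^t)^\top + \vg_j(\vx^{t+1})\vg_j(\vx^{t+1})^\top)$, which follows by expanding the product column-block by column-block: the $2\ell-1$st column of $\mU^t$ is $-\vg_{j_\ell}(\vz_{i_t}^t)$ and the $2\ell-1$st column of $\mV^t$ is $+\vg_{j_\ell}(\vz_{i_t}^t)$, contributing $-\vg_{j_\ell}(\vz_{i_t}^t)\vg_{j_\ell}(\vz_{i_t}^t)^\top$, while the $2\ell$th columns of both are $\vg_{j_\ell}(\vx^{t+1})$, contributing $+\vg_{j_\ell}(\vx^{t+1})\vg_{j_\ell}(\vx^{t+1})^\top$. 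Summing over $\ell=1,\dots,|\fS_{i_t}|$ yields the claim. Everything else is a line-for-line copy of the proof of Lemma~\ref{le:via-update}, so the proof is short.
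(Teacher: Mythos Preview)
Your proposal is correct and follows essentially the same approach as the paper's own proof: rewrite the $\mH^{t+1}$ recursion as $\mH^{t+1}=\mH^t+\mU^t(\mV^t)^\top$, apply the matrix determinant lemma, and conclude nonsingularity from the nonsingularity of $\mH^t$ and $\mH^{t+1}$. Your additional block-by-block verification of the identity $\mU^t(\mV^t)^\top=\sum_{j\in\fS_{i_t}}(-\vg_j(\vz_{i_t}^t)\vg_j(\vz_{i_t}^t)^\top+\vg_j(\vx^{t+1})\vg_j(\vx^{t+1})^\top)$ is a nice touch that the paper leaves implicit.
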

\begin{proof}
    The recursion of $\mH^t$ and the definition of $\mU^t$ and $\mV^t$ imply
    \begin{align*}
        \mH^{t+1} = \mH^t - \sum_{j\in\fS_{i_t}}\vg_{j}(\vz_{i_t}^t)\vg_{j}(\vz_{i_t}^t)^\top + \sum_{j\in\fS_{i_t}}\vg_{j}(\vx^{t+1})\vg_{j}(\vx^{t+1})^\top = \mH^t + \mU^t{\mV^t}^\top.
    \end{align*}
    Since we assume matrices $\mH^t$ and $\mH^{t+1}$ are non-singular, applying the matrix determinant lemma \cite[section 9.1.2]{petersen2008matrix} on above equation leads to
    \begin{align*}
        \det(\mH^{t+1})=\det(\mH^t + \mU^t{\mV^t}^\top) = \det(\mI + {\mV^t}^\top(\mH^t)^{-1}\mU^t)\det(\mH^t).
    \end{align*}
    Then the definition $\mG^t={\mH^t}^{-1}$ implies
    \begin{align*}
        \det(\mI + {\mV^t}^\top\mG^t\mU^t) = \det(\mI + {\mV^t}^\top{\mH^t}^{-1}\mU^t) \neq 0
    \end{align*}
    which finish the proofs.
\end{proof}

Then we show the non-singular assumption on $\{\mH^j\}_{j=0}^t$ can upper bound the distance $\Norm{\vx^{t+1} - \vx^*}$.

\begin{lem}\label{le:main-iter-k} 
Under Assumptions \ref{asm:L-f} and \ref{asm:holder-g}, we
assume matrices $\{\mH^j\}_{j=0}^t$ are non-singular and run \MBIGN~(Algorithm \ref{alg:MB-IGN}) with batch size $k$, then it holds
\begin{align*}
    \Norm{\vx^{t+1} - \vx^*}
     \leq \frac{kL_f\fH_\nu}{1 
     +\nu}\Norm{\mG^t}\sum_{i=1}^m\Norm{\vz_i^t-\vx^*}^{1+\nu},
    \end{align*}
    where $\mG^t=\left(\mH^t\right)^{-1}$ and $m=\ceil{n/k}$.
\end{lem}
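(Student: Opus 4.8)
The plan is to mimic the proof of Lemma~\ref{le:main-iter-1} for the single-component \IGN, replacing each single gradient term by a sum over the mini-batch $\fS_{i_t}$. First I would write down the closed-form update for \MBIGN: since $\vx^{t+1} = \mG^t\vu^t$ with $\mG^t = (\mH^t)^{-1}$, and using the definitions of $\vu^t$ and $\mH^t$ as the aggregated variables maintained by the recursions~(\ref{eq:recursion-k}), I have
\begin{align*}
\vx^{t+1} = \left(\sum_{i=1}^m\sum_{j\in\fS_i}\vg_j(\vz_i^t)\vg_j(\vz_i^t)^\top\right)^{-1}\sum_{i=1}^m\sum_{j\in\fS_i}\left(\vg_j(\vz_i^t)^\top\vz_i^t - f_j(\vz_i^t)\right)\vg_j(\vz_i^t).
\end{align*}
This is the mini-batch analogue of~(\ref{eq:ign-update-0}), and the matrix being inverted is exactly $\mH^t$, which is non-singular by assumption.

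Next I would subtract $\vx^*$ from both sides and insert $\sum_{i,j} f_j(\vx^*)\vg_j(\vz_i^t)$ into the right-hand sum; since $f_j(\vx^*) = 0$ for all $j\in[n]$ this changes nothing, but it lets me group terms as
\begin{align*}
\vx^{t+1} - \vx^* = \mG^t\sum_{i=1}^m\sum_{j\in\fS_i}\left(\vg_j(\vz_i^t)^\top(\vz_i^t - \vx^*) - f_j(\vz_i^t) + f_j(\vx^*)\right)\vg_j(\vz_i^t),
\end{align*}
using also that $\big(\sum_{i,j}\vg_j(\vz_i^t)\vg_j(\vz_i^t)^\top\big)\vx^* = \mH^t\vx^*$ so that $\mG^t\mH^t\vx^* = \vx^*$. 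Then I would take norms, pull out $\Norm{\mG^t}$, apply the triangle inequality over the double sum, and bound each scalar factor $|\vg_j(\vz_i^t)^\top(\vz_i^t - \vx^*) - f_j(\vz_i^t) + f_j(\vx^*)|$ by $\frac{\fH_\nu}{1+\nu}\Norm{\vz_i^t - \vx^*}^{1+\nu}$ via Lemma~\ref{le:power-bound}, and each $\Norm{\vg_j(\vz_i^t)}$ by $L_f$ via Lemma~\ref{le:b-grad}. This yields
\begin{align*}
\Norm{\vx^{t+1} - \vx^*} \leq \frac{L_f\fH_\nu}{1+\nu}\Norm{\mG^t}\sum_{i=1}^m|\fS_i|\Norm{\vz_i^t - \vx^*}^{1+\nu} \leq \frac{kL_f\fH_\nu}{1+\nu}\Norm{\mG^t}\sum_{i=1}^m\Norm{\vz_i^t - \vx^*}^{1+\nu},
\end{align*}
where the last step uses $|\fS_i|\leq k$ for every $i\in[m]$, which is the only genuinely mini-batch-specific ingredient. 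There is no real obstacle here — the argument is a routine bookkeeping adaptation of Lemma~\ref{le:main-iter-1}; the one point to be slightly careful about is that the inner index set $\fS_i$ has size $k$ for $i<m$ but possibly less for $i=m$, so I use the uniform bound $|\fS_i|\le k$ rather than equality, and this is exactly what produces the factor $k$ (rather than the factor $n$ that appears in the \IGN\ bound, since $\sum_i |\fS_i| = n = mk$ roughly).
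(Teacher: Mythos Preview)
Your proposal is correct and follows essentially the same approach as the paper's proof: write the closed-form update, subtract $\vx^*$, insert the vanishing term $f_j(\vx^*)$, take norms, and bound each summand via Lemmas~\ref{le:power-bound} and~\ref{le:b-grad} before invoking $|\fS_i|\le k$. Your remark about the last block possibly having fewer than $k$ elements is exactly the reason the paper also uses the inequality $|\fS_i|\le k$ rather than equality.
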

\begin{proof}
    Subtracting the term $\vx^*$ on both sides of equation (\ref{eq:ign-update-0}), we have
    \begin{align*}
    \vx^{t+1} - \vx^*
        & = \left(\sum_{i=1}^{m}\sum_{j\in\fS_i}\vg_j(\vz_i^t)\vg_j(\vz_i^t)^\top \right)^{-1}\left(\sum_{i=1}^{m}\left(\sum_{j\in\fS_i}\vg_j(\vz_i^t)\vg_j(\vz_i^t)^\top\right) (\vz_i^t - \vx^*) - \sum_{i=1}^{m}\sum_{j\in\fS_i} f_j(\vz_i^t)\vg_j(\vz_i^t)\right)\\
    & = \mG^t\left(\sum_{i=1}^{m}\left(\sum_{j\in\fS_i}\vg_j(\vz_i^t)\vg_j(\vz_i^t)^\top\right) (\vz_i^t - \vx^*) - \sum_{i=1}^{m}\sum_{j\in\fS_i} f_j(\vz_i^t)\vg_j(\vz_i^t) + \sum_{i=1}^{m}\sum_{j\in\fS_i} f_j(\vx^*)\vg_j(\vz_i^t)\right)\\
    & = \mG^t\sum_{i=1}^{m}\sum_{j\in\fS_i}\vg_j(\vz_i^t)\left( \vg_j(\vz_i^t)^\top(\vz_i^t-\vx^*) - f_j(\vz_i^t) + f_j(\vx^*)\right).
    \end{align*}

    Taking the norm on the both sides of above results, we have
    \begin{align*}
    \Norm{\vx^{t+1} - \vx^*}
    & = \Norm{\mG^t\sum_{i=1}^{m}\sum_{j\in\fS_i}\vg_j(\vz_i^t)\left( \vg_j(\vz_i^t)^\top(\vz_i^t-\vx^*) - f_j(\vz_i^t) + f_j(\vx^*)\right)}\\
    & \leq \Norm{\mG^t}\Norm{\sum_{i=1}^m\sum_{j\in\fS_i}\left( \vg_i(\vz_i^t)^\top(\vz_i^t-\vx^*) - f_i(\vz_i^t) + f_i(\vx^*)\right)\vg_i(\vz_i^t)}\\
    & \leq \frac{L_f\fH_\nu}{1 + \nu}\Norm{\mG^t}\sum_{i=1}^n\sum_{j\in\fS_i}\Norm{\vz_i^t-\vx^*}^{1+\nu}\\
    & \leq \frac{kL_f\fH_\nu}{1 + \nu}\Norm{\mG^t}\sum_{i=1}^n\Norm{\vz_i^t-\vx^*}^{1+\nu}
    \end{align*}    
    where the first inequality comes from the property of matrix norm, the second inequality is based on Lemma \ref{le:power-bound} and \ref{le:b-grad}, the last inequality is based on $|\fS_i|\leq k$ for all $i\in[m]$.
\end{proof}

We split the results of Theorem \ref{thm:Group-1} into two parts (i.e., Theorem \ref{thm:Group-1-1} and \ref{thm:Group-1-2}) and provide their proofs as follows.
Our analysis is based on the properties of our the auxiliary sequence constructed in Section~\ref{sec:series}.

\begin{thm}
\label{thm:Group-1-1}
    Under the Assumption \ref{asm:L-f}, \ref{asm:holder-g} and \ref{asm:b-J-sing}, we run \MBIGN~(Algorithm \ref{alg:MB-IGN}) with batch size $k$, and initialization $\vx^0\in\BR^d$ and~$\mH^0=\mJ(\vx^0)^\top\mJ(\vx^0)$ such that
    \begin{align*}
    \Norm{\vx^0 - \vx^*}\leq \left(\frac{\mu^2}{4kL_f\fH_\nu m}\right)^{{1}/{\nu}},
    \end{align*} 
    where $m=\ceil{n/k}$, then it holds 
    \begin{align*}
    \sigma_{\min}(\mI + (\mV^t)^\top (\mH^t)^{-1}\mU^t) > 0,\quad
    \mH^t \succeq \frac{\mu^2}{2}\mI\quad
    \text{and}\quad
    \Norm{\vx^{t}-\vx^*}\leq a_{t+1}(m, \nu)\Norm{\vx^0-\vx^*}
    \end{align*}
    for all $t \geq 0$, where the sequence $\{a_t(m, \nu)\}_{t\geq 0}$ is defined in equation (\ref{eq:series}). 
\end{thm}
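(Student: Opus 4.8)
The plan is to transcribe the proof of Theorem \ref{thm:holder-IGN-1-1} essentially line by line, replacing the cycle length $n$ by $m=\ceil{n/k}$ and swapping in the mini-batch versions of the two workhorse lemmas: Lemma \ref{le:lb-Ht-new-k} in place of Lemma \ref{le:lb-Ht-new-1}, which gives $\sigma_{\min}(\mH^t)\ge \mu^2-2kL_f\fH_\nu\sum_{i=1}^m\Norm{\vz_i^t-\vx^*}^\nu$, and Lemma \ref{le:main-iter-k} in place of Lemma \ref{le:main-iter-1}, which gives $\Norm{\vx^{t+1}-\vx^*}\le \frac{kL_f\fH_\nu}{1+\nu}\Norm{\mG^t}\sum_{i=1}^m\Norm{\vz_i^t-\vx^*}^{1+\nu}$. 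Lemma \ref{le:via-update-k} guarantees that the Woodbury update on Line \ref{line:update-G-k} is well-defined as soon as $\mH^t$ and $\mH^{t+1}$ are non-singular, so once positive-definiteness of every $\mH^t$ is shown, the claim $\sigma_{\min}(\mI+(\mV^t)^\top(\mH^t)^{-1}\mU^t)>0$ follows for free. The initialization radius $\Norm{\vx^0-\vx^*}\le(\mu^2/(4kL_f\fH_\nu m))^{1/\nu}$ is exactly calibrated so that $2kL_f\fH_\nu m\,\Norm{\vx^0-\vx^*}^\nu\le\mu^2/2$, which is what makes the first application of Lemma \ref{le:lb-Ht-new-k} yield $\mH^0\succeq(\mu^2/2)\mI$.

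First I would set up the three-part induction on $t$ exactly as in the \IGN~proof. The base case $t=0$ is immediate from $a_0(m,\nu)=1$. For $0\le t\le m-1$ I would assume $\mH^j\succeq(\mu^2/2)\mI$ and $\Norm{\vx^{j+1}-\vx^*}\le a_{j+1}(m,\nu)\Norm{\vx^0-\vx^*}$ for all $j<t$; the cyclic rule (\ref{eq:update-2}) with period $m$ gives $\vz_i^t=\vx^i$ for $1\le i\le t$ and $\vz_i^t=\vx^0$ for $t<i\le m$, and since $a_j(m,\nu)\le1$ by Lemma \ref{le:s-mono-000}, each $\Norm{\vz_i^t-\vx^*}^\nu\le\mu^2/(4kL_f\fH_\nu m)$, hence $\mH^t\succeq(\mu^2/2)\mI$ and $\Norm{\mG^t}\le 2/\mu^2$. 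Plugging this into Lemma \ref{le:main-iter-k} and splitting $\sum_{i=1}^m$ into the $t$ terms with $\vz_i^t=\vx^i$ and the $m-t$ terms with $\vz_i^t=\vx^0$ reproduces exactly the $1\le t+1\le m$ branch of the recursion (\ref{eq:series}) defining $a_{t+1}(m,\nu)$. For $t\ge m$ the same computation applies, now with $\{\vz_i^t\}_{i=1}^m=\{\vx^{t+1-m},\dots,\vx^t\}$, and produces $\Norm{\vx^{t+1}-\vx^*}\le\frac{1}{2(1+\nu)m}\sum_{j=t-m+1}^{t}(a_j(m,\nu))^{1+\nu}\Norm{\vx^0-\vx^*}=a_{t+1}(m,\nu)\Norm{\vx^0-\vx^*}$, matching the $t>m$ branch of (\ref{eq:series}).

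The one step that needs genuine attention rather than mechanical substitution is the index bookkeeping: I must check that the period-$m$ cyclic update makes the multiset $\{\vz_i^t\}_{i=1}^m$ equal to the last $m$ iterates (padded by $\vx^0$ when $t<m$) in precisely the arrangement that lets the partial sums line up with the piecewise definition of $a_t(m,\nu)$, and that the factor $k$ introduced in both Lemma \ref{le:lb-Ht-new-k} and Lemma \ref{le:main-iter-k} is exactly cancelled by the $k$ sitting inside the initialization radius. Once this is verified, the two displayed bounds hold for all $t\ge0$, $\mH^t\succeq(\mu^2/2)\mI$ gives non-singularity of every $\mH^t$, and Lemma \ref{le:via-update-k} closes the argument. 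Everything else is a direct copy of the proof of Theorem \ref{thm:holder-IGN-1-1}.
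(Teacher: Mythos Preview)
Your proposal is correct and mirrors the paper's own proof essentially step for step: the paper also runs the three-part induction on $t$ (base case $t=0$, then $0\le t\le m-1$, then $t\ge m$), swaps in Lemmas \ref{le:lb-Ht-new-k} and \ref{le:main-iter-k} for their single-component counterparts, checks that the extra factor $k$ in those lemmas is absorbed by the $k$ in the initialization radius, and finishes by invoking Lemma \ref{le:via-update-k} once every $\mH^t$ is known to be positive definite. Your remark that the only non-mechanical point is the index bookkeeping matching $\{\vz_i^t\}_{i=1}^m$ to the last $m$ iterates so that the sums reproduce the piecewise recursion (\ref{eq:series}) is exactly the content of the paper's argument.
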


\begin{proof}
We first show
\begin{align}\label{eq:Hxtxstar0-k}
\mH^t \succeq \frac{\mu^2}{2}\mI\qquad
\text{and}\qquad
\Norm{\vx^{t}-\vx^*}\leq a_{t+1}(m, \nu)\Norm{\vx^0-\vx^*}
\end{align}
holds for all $t\geq 0$. We split the proof of results (\ref{eq:Hxtxstar0-k}) into the following three parts.

\textbf{Part I:} For $t=0$, 
the initialization and the fact $a_0=1$ leads to 
\begin{align*}
 \Norm{\vx^0 - \vx^*} = a_0(m, \nu)\Norm{\vx^0 - \vx^*}.
\end{align*}

\textbf{Part II:} For all $t=0,\cdots, m-1$, we use induction to prove the results of 
\begin{align}\label{eq:Hxtxstar-k}
\mH^t \succeq \frac{\mu^2}{2}\mI\qquad
\text{and}\qquad
\Norm{\vx^{t+1}-\vx^*}\leq a_{t+1}(m, \nu)\Norm{\vx^0-\vx^*}.
\end{align}
For the induction base, we can apply Lemma \ref{le:lb-Ht-new-k} to verify
\begin{align*}
    \sigma_{\min}(\mH^0) &\geq  \mu^2 - 2kL_f\fH_\nu\sum_{i=1}^m \Norm{\vz_i^0 - \vx^*}^\nu\\
    & = \mu^2 - 2kL_f\fH_\nu\sum_{i=1}^m \Norm{\vx^0 - \vx^*}^\nu\\
    & \geq \mu^2 - 2kL_f\fH_\nu m \frac{\mu^2}{4kL_f\fH_\nu m}\\
    & = \mu^2 - \frac{\mu^2}{2}\\
    & = \frac{\mu^2}{2}.
\end{align*}

This implies
\begin{align}\label{eq:boundG0-k}
\mH^0 \succeq \frac{\mu^2}{2} \qquad \text{and}\qquad
\Norm{\mG^0} = \Norm{(\mH^0)^{-1}} \leq \frac{2}{\mu^2}.
\end{align}
According to Lemma \ref{le:main-iter-k}, we have
\begin{align*}
    \Norm{\vx^1 - \vx^*} 
    &\leq \frac{kL_f\fH_\nu}{1+\nu}\Norm{\mG^0}\sum_{i=1}^m\Norm{\vz_i^0-\vx^*}^{1+\nu}\\
    & \leq \frac{kL_f\fH_\nu}{1+\nu}\cdot\frac{2}{\mu^2}\cdot\sum_{i=1}^m\Norm{\vz_i^0-\vx^*}^{1+\nu}\\
    & = \frac{kL_f\fH_\nu}{1+\nu}\cdot\frac{2}{\mu^2}\cdot m\Norm{\vx^0-\vx^*}^{1+\nu}\\
    & \leq \frac{kmL_f\fH_\nu}{1+\nu}\cdot\frac{2}{\mu^2}\cdot\frac{\mu^2}{4kL_f\fH_\nu m}\Norm{\vx^0-\vx^*}\\
    & = \frac{1}{2(1+\nu)}\Norm{\vx^0-\vx^*}\\
    & = a_1(m, \nu)\Norm{\vx^0 - \vx^*},
\end{align*}
where the first inequality is based on equation (\ref{eq:boundG0-k}) and the second inequality is based on initial condition. 
Therefore, the induction base holds

For the induction step, we assume
\begin{align*}
    \mH^j \succeq \frac{\mu^2}{2} \mI \qquad\text{and}\qquad\Norm{\vx^{j+1} - \vx^*} \leq a_{j+1}(m, \nu)\Norm{\vx^0 - \vx^*}
\end{align*}
hold for all $j = 2, \cdots, t-1$ such that $t \leq m - 1$. Therefore, the update (\ref{eq:update-1}) means
\begin{align}\label{eq:vziii-k}
    \vz_i^{t}=
    \begin{cases}
        \vx^i,~~~~&1 \leq i\leq t,\\
        \vx^0,~~~~&t < i \leq m. 
    \end{cases}
\end{align}
The induction hypothesis leads to
\begin{align*}
    \Norm{\vx^j-\vx^*}^{\nu}\leq (a_j(m, \nu))^{\nu}\Norm{\vx^0-\vx^*}^{\nu} \leq  \Norm{\vx^0-\vx^*}^{\nu}\leq \frac{\mu^2}{4kL_f\fH_\nu m}, 
\end{align*}
for $j = 1, \cdots, t-1$, where the second is based on Lemma \ref{le:s-mono-000} and the third comes from the initial condition. 
Combining with the result of (\ref{eq:vziii-k}), we achive
\begin{align*}
    \Norm{\vz_i^{t} - \vx^*}^{\nu} \leq \frac{\mu^2}{4kL_f\fH_\nu m}.
\end{align*}

According to Lemma \ref{le:lb-Ht-new-k}, we have
\begin{align*}
    \sigma_{\min}(\mH^{t}) &\geq  \mu^2 - 2kL_f\fH_\nu\sum_{i=1}^m \Norm{\vz_i^{t} - \vx^*}^\nu\\
    & \geq \mu^2 - 2kL_f\fH_\nu m \frac{\mu^2}{4kL_f\fH_\nu m}\\
    & = \mu^2 - \frac{\mu^2}{2}\\
    & = \frac{\mu^2}{2},
\end{align*}

where the second inequality comes from the initial condition.
Therefore, we have
\begin{align*}
    \mH^{t}  \succeq \frac{\mu^2}{2}\mI \qquad \text{and}\qquad
    \Norm{\mG^{t}}  = \Norm{(\mH^{t})^{-1}} \leq \frac{2}{\mu^2}.
\end{align*}
According to Lemma \ref{le:main-iter-1}, we have
\begin{align*}
    \Norm{\vx^{t+1} - \vx^*} 
    & \leq \frac{kL_f\fH_\nu }{1+\nu}\Norm{\mG^{t}}\sum_{i=1}^m\Norm{\vz_i^{t}-\vx^*}^{1+\nu}\\
    & \leq \frac{kL_f\fH_\nu }{1+\nu}\frac{2}{\mu^2}\sum_{i=1}^m\Norm{\vz_i^{t}-\vx^*}^{1+\nu}\\
    & \leq \frac{2L_f\fH_\nu}{(1+\nu)\mu^2}\left(\sum_{j=1}^{t}\Norm{\vx^{j}-\vx^*}^{1+\nu} + (m - t)\Norm{\vx^0 - \vx^*}^{1+\nu}\right)\\
    & \leq \frac{2kL_f\fH_\nu }{(1+\nu)\mu^2}\left(\sum_{j=1}^{t} (a_j(m, \nu))^{1+\nu}\Norm{\vx^0 - \vx^*}^{1+\nu} + (m - t) \Norm{\vx^0 - \vx^*}^{1+\nu}\right)\\
    & \leq \frac{2kL_f\fH_\nu }{(1+\nu)\mu^2}\frac{\mu^2}{4kL_f\fH_\nu m}\left(\sum_{j=1}^{t}(a_j(m, \nu))^{1+\nu} + m - t\right)\Norm{\vx^0 - \vx^*}\\
    & = \frac{1}{2(1+\nu)m}\left(\sum_{j=1}^{t}(a_j(m, \nu))^{1+\nu} + m - t\right)\Norm{\vx^0 - \vx^*}\\
    & = \frac{1}{2(1+\nu)m}\left(\sum_{j=0}^{t}(a_j(m, \nu))^{1+\nu} + m - t - 1\right)\Norm{\vx^0 - \vx^*}\\
    & = a_{t+1}(m, \nu)\Norm{\vx^0 - \vx^*},
\end{align*}
where the last equality comes from the fact $a_0(m, \nu)=1$. 
Therefore, we finish the induction.

\textbf{Part III:} For all $t \geq m$, we use induction to prove 
\begin{align*}
\mH^t\succeq(\mu^2/2)\mI\qquad\text{and}\qquad\Norm{\vx^{t+1} - \vx^*}\leq a_{t+1}(m, \nu)\Norm{\vx^0 - \vx^*}.    
\end{align*}
For the induction base, we can verify that it holds (from the result of Part II) 
\begin{align*}
\mH^j \succeq \frac{\mu^2}{2}\mI \qquad \text{for all~~} j=0,\dots,m - 1,
\end{align*}
and
\begin{align*}
\Norm{\vx^j-\vx^*}\leq a_j(m, \nu)\Norm{\vx^0-\vx^*} \qquad \text{for all~~} j=1,\dots,m.
\end{align*}
Then we have
\begin{align*}
    \Norm{\vx^j-\vx^*}^{\nu}\leq (a_j(m, \nu))^{\nu}\Norm{\vx^0-\vx^*}^{\nu} \leq \Norm{\vx^0-\vx^*}^{\nu} \leq \frac{\mu^2}{4kL_f\fH_\nu m}, \quad \text{for all~~} j=1,\dots,m,
\end{align*}
where the second inequality is based on Lemma \ref{le:s-mono-000} and the third inequality is based on the initial condition.

From Eq.~\ref{eq:update-1}, we have
\begin{align*}
    \vz_i^{m}=\vx^i \qquad\text{for all~~} i\in[m].
\end{align*}

Therefore, we have
\begin{align*}
    \Norm{\vz_i^{m} - \vx^*}^{\nu} \leq \frac{\mu^2}{4kL_f\fH_\nu m}, \qquad\text{for all~~} i\in[m].
\end{align*}
According to Lemma \ref{le:lb-Ht-new-k}, we have
\begin{align*}
    \sigma_{\min}(\mH^{m}) &\geq  \mu^2 - 2kL_f\fH_\nu\sum_{i=1}^m \Norm{\vz_i^{m} - \vx^*}^\nu\\
    & \geq \mu^2 - 2kL_f\fH_\nu m \frac{\mu^2}{4kL_f\fH_\nu m}\\
    & \geq \mu^2 - \frac{\mu^2}{2} = \frac{\mu^2}{2},
\end{align*}

which implies
\begin{align*}
    \mH^{m}  \succeq \frac{\mu^2}{2}\mI \qquad\text{and}\qquad
    \Norm{\mG^{m}}  = \Norm{(\mH^{m})^{-1}} \leq \frac{2}{\mu^2}.
\end{align*}
According to Lemma \ref{le:main-iter-k}, we have
\begin{align*}
    \Norm{\vx^{n+1} - \vx^*}
    & \leq \frac{kL_f\fH_\nu }{1+\nu}\Norm{\mG^n}\sum_{i=1}^m\Norm{\vz_i^m-\vx^*}^{1+\nu}\\
    & \leq \frac{2kL_f\fH_\nu }{(1+\nu)\mu^2}\sum_{i=1}^m\Norm{\vz_i^m-\vx^*}^{1+\nu}\\
    & \leq \frac{2kL_f\fH_\nu }{(1+\nu)\mu^2}\left(\sum_{j=1}^m (a_j(m, \nu))^{1+\nu}\Norm{\vx^0 - \vx^*}^{1+\nu}\right)\\
    & \leq \frac{2kL_f\fH_\nu }{(1+\nu)\mu^2}\frac{\mu^2}{4kL_f\fH_\nu m}\left(\sum_{j=1}^{m}(a_j(m, \nu))^{1+\nu}\right)\Norm{\vx^0-\vx^*}\\
    & = \frac{1}{2(1+\nu)m}\left(\sum_{j=1}^{m}(a_j(m, \nu))^{1+\nu}\right)\Norm{\vx^0-\vx^*}\\
    & = a_{m+1}(m, \nu)\Norm{\vx^0-\vx^*}.
\end{align*}
Hence, we have shown the induction base holds. 

For the induction step, we assume
\begin{align*}
    \mH^j \succeq \frac{\mu^2}{2} \mI \qquad\text{and}\qquad\Norm{\vx^{j+1} - \vx^*} \leq a_{j+1}(m, \nu)\Norm{\vx^0 - \vx^*}
\end{align*}
holds for all $j = m+1, \cdots, t-1$ such that $t \geq m + 2$. Combining results of Part I and II, we have

\begin{align*}
\Norm{\vx^j-\vx^*}\leq a_j(m, \nu)\Norm{\vx^0-\vx^*} \qquad \text{for all}~~ j=0,\dots,t,
\end{align*}
which implies
\begin{align*}
    \Norm{\vx^j-\vx^*}^{\nu}\leq (a_j(m, \nu))^{\nu}\Norm{\vx^0-\vx^*}^{\nu} \leq \Norm{\vx^0-\vx^*}^{\nu} \leq \frac{\mu^2}{4kL_f\fH_\nu m}, \qquad\text{for all}~~j=1,\dots,t,
\end{align*}
where the second inequality is based on Lemma \ref{le:s-mono-000} and the last inequality is based on the condition condition.

The update~(\ref{eq:update-2}) means the points $\{\vz_i^{t}\}_{i=1}^m$ can be written as $\{\vx^{t+1-m}, \cdots, \vx^{t}\}$, which implies
\begin{align*}
    \max\{\Norm{\vz_1^{t}-\vx^*}, \cdots, \Norm{\vz_m^{t}-\vx^*}\}=\max\{\Norm{\vx^{t+1-m}-\vx^*}, \cdots, \Norm{\vx^{t}-\vx^*}\}.
\end{align*}
Therefore, we have
\begin{align*}
    \Norm{\vz_i^{m} - \vx^*}^{\nu} \leq \frac{\mu^2}{4kL_f\fH_\nu m} \qquad \text{for all}~~i=1,\dots,m.
\end{align*}

Combing with Lemma \ref{le:lb-Ht-new-k}, we have
\begin{align*}
    \sigma_{\min}(\mH^{t}) &\geq  \mu^2 - 2kL_f\fH_\nu\sum_{i=1}^n \Norm{\vz_i^{t} - \vx^*}^\nu\\
    & \geq \mu^2 - 2kL_f\fH_\nu m \frac{\mu^2}{4kL_f\fH_\nu m}\\
    & = \mu^2 - \frac{\mu^2}{2} = \frac{\mu^2}{2}.
\end{align*}
Therefore, we achieve
\begin{align*}
    \mH^{t}  \succeq \frac{\mu^2}{2}\mI \qquad\text{and}\qquad
    \Norm{\mG^{t}} & = \Norm{(\mH^{t})^{-1}} \leq \frac{2}{\mu^2}.
\end{align*}
According to Lemma \ref{le:main-iter-k}, we have
\begin{align*}
    \Norm{\vx^{t+1} - \vx^*}
    & \leq \frac{kL_f\fH_\nu }{1+\nu}\Norm{\mG^{t}}\sum_{i=1}^m\Norm{\vz_i^{t}-\vx^*}^{1+\nu}\\
    & \leq \frac{2kL_f\fH_\nu }{(1+\nu)\mu^2}\sum_{i=1}^m\Norm{\vz_i^{t}-\vx^*}^{1+\nu}\\
    & \leq \frac{2kL_f\fH_\nu }{(1+\nu)\mu^2}\left(\sum_{j=t-m+1}^{t} (a_j(m, \nu))^{1+\nu}\Norm{\vx^0 - \vx^*}^{1+\nu}\right)\\
    & \leq \frac{2kL_f\fH_\nu }{(1+\nu)\mu^2}\left(\sum_{j=t-m+1}^{t} (a_j(m, \nu))^{1+\nu}\right)\Norm{\vx^0 - \vx^*}^{1+\nu}\\
    & \leq \frac{2kL_f\fH_\nu }{(1+\nu)\mu^2}\frac{\mu^2}{4kL_f\fH_\nu m}\left(\sum_{j=t-m+1}^{t+1}(a_j(m, \nu))^{1+\nu}\right)\Norm{\vx^0 - \vx^*}\\
    & = \frac{1}{2(1+\nu)m}\left(\sum_{j=t-m+2}^{t+1}(a_j(m, \nu))^{1+\nu}\right)\Norm{\vx^0 - \vx^*}\\
    & = a_{t+1}(m, \nu)\Norm{\vx^0 - \vx^*}.
\end{align*}
Hence, we finish the induction.

Combining results of Part I, II and III completes the proof of (\ref{eq:Hxtxstar0-k}).

Since the non-singularity of $\mH^t$ and $\mH^{t+1}$ has been verified by result (\ref{eq:Hxtxstar0-k}), we can apply Lemma \ref{le:via-update-k} to achieve
\begin{align*}
    \sigma_{\min}(\mI + (\mV^t)^\top(\mH^t)^{-1}\mU^t) > 0.
\end{align*}
\end{proof}

\begin{thm}
\label{thm:Group-1-2}
We define the sequence $\{r_t\}_{t\geq 0}$ such that
\begin{align*}
    r_t\triangleq\begin{cases}
    \max\{\Norm{\vx^0 - \vx^*}, 1\},~~~~&t=0,\\[0.2cm]
    a_t(m, \nu)r_0,~~~~& t\geq 1,\\
\end{cases}
\end{align*}
where the sequence $\{a_t(m, \nu)\}_{t\geq 0}$ is defined by equation (\ref{eq:series}). 
Under the Assumptions \ref{asm:L-f}, \ref{asm:holder-g} and \ref{asm:b-J-sing}, running \MBIGN~(Algorithm \ref{alg:MB-IGN}) with initial condition shown in Theorem \ref{thm:Group-1-2}, we have
    \begin{align}
    \label{eq:r_t_recursion-k}
    \Norm{\vx^t - \vx^*} \leq r_t
    \qquad\text{and}\qquad
    r_{t+1}\leq  c^{(1+\nu)^{\left(\floor{\frac{t}{m}}-1\right)}}r_t  
    \end{align}
    for all $t\geq m$, where
    \begin{align*}
    c = 1- \frac{1}{m}\left(1-\left(\frac{1}{2(1+\nu)}\right)^{1+\nu}\right).
    \end{align*}
\end{thm}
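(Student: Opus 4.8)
The plan is to mirror the proof of Theorem~\ref{thm:holder-IGN-1-2} almost verbatim, replacing the cyclic period $n$ by the mini-batch period $m=\ceil{n/k}$ and invoking the \MBIGN~counterparts of the supporting results. All the heavy lifting is already done: Theorem~\ref{thm:Group-1-1} supplies the per-iterate bound in terms of the auxiliary sequence, and the properties of that sequence in Section~\ref{sec:series} were stated for an arbitrary positive integer in Definition~\ref{def:series} precisely so that they apply with $m$ in place of $n$.

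First I would record that the definition of $\{r_t\}_{t\ge0}$ gives $r_0=\max\{\Norm{\vx^0-\vx^*},1\}\ge \Norm{\vx^0-\vx^*}$ (and also $r_0\ge 1$, which is what the quadratic-rate corollary will later use). Applying Theorem~\ref{thm:Group-1-1}, together with the monotonicity $a_{t+1}(m,\nu)\le a_t(m,\nu)$ from Lemma~\ref{le:s-mono}, then yields
\[
\Norm{\vx^t-\vx^*}\le a_t(m,\nu)\Norm{\vx^0-\vx^*}\le a_t(m,\nu)\,r_0=r_t
\]
for all $t\ge0$, which is the first half of~(\ref{eq:r_t_recursion-k}).

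For the second half, I would apply Lemma~\ref{le:super-con} to the sequence $\{a_t(m,\nu)\}_{t\ge0}$, i.e.\ instantiating the parameter $n$ of Definition~\ref{def:series} by $m$. This gives
\[
a_{t+1}(m,\nu)\le c^{(1+\nu)^{\left(\floor{t/m}-1\right)}}a_t(m,\nu)\qquad\text{for all }t\ge m,
\]
with $c=1-\tfrac1m\big(1-(\tfrac{1}{2(1+\nu)})^{1+\nu}\big)$, which is exactly the constant appearing in the statement (it is the constant of Lemma~\ref{le:linear-con} / Lemma~\ref{le:super-con} with $n$ replaced by $m$). Multiplying through by $r_0>0$ and using $r_{t+1}=a_{t+1}(m,\nu)r_0$ and $r_t=a_t(m,\nu)r_0$ gives $r_{t+1}\le c^{(1+\nu)^{(\floor{t/m}-1)}}r_t$ for all $t\ge m$; combining this with Theorem~\ref{thm:Group-1-1} completes the proof of Theorem~\ref{thm:Group-1}.

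The main obstacle is entirely bookkeeping rather than mathematical: one must make sure that every occurrence of the period in Section~\ref{sec:series}---the threshold $t\ge m$, the exponent $\floor{t/m}-1$, and the constant $c$---is taken with $m=\ceil{n/k}$ and not with $n$, and that Theorem~\ref{thm:Group-1-1} (the genuinely substantive input, whose proof establishes $\mH^t\succeq(\mu^2/2)\mI$ and the iterate contraction for \MBIGN) has indeed been proved first. No new estimate on the \MBIGN~iterates beyond what Theorem~\ref{thm:Group-1-1} provides is needed.
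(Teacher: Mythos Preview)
Your proposal is correct and follows essentially the same approach as the paper: invoke Theorem~\ref{thm:Group-1-1} to bound $\Norm{\vx^t-\vx^*}$ by $a_t(m,\nu)r_0$, then apply Lemma~\ref{le:super-con} with the period $m$ and multiply through by $r_0$. The only (harmless) difference is that you explicitly use the monotonicity Lemma~\ref{le:s-mono} to pass from the $a_{t+1}$ appearing in the statement of Theorem~\ref{thm:Group-1-1} to $a_t$, whereas the paper simply cites Theorem~\ref{thm:Group-1-1} as giving the $a_t$ bound directly (which is what its proof actually establishes).
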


\begin{proof}
    The definition of $\{r_t\}_{t\geq 0}$ leads to 
    \begin{align*}
        r_0 = \max\{\Norm{\vx^0 - \vx^*}, 1\} \geq \Norm{\vx^0 - \vx^*}.
    \end{align*}
    According to Theorem \ref{thm:Group-1-1}, we have 
    \begin{align*}
    \Norm{\vx^t - \vx^*} \leq a_{t}(m, \nu)\Norm{\vx^0 - \vx^*} \leq a_t(m, \nu)r_0 = r_t.
    \end{align*}
    According to Lemma \ref{le:super-con}, we have
    \begin{align*}
         a_{t+1}(m, \nu)\leq  c^{(1+\nu)^{\left(\floor{\frac{t}{m}}-1\right)}}a_t(m, \nu) \qquad \text{for all}~~t \geq m.
    \end{align*}
    Thus, achieve
    \begin{align*}
        r_{t+1} = a_{t+1}(m, \nu)r_0\leq  c^{(1+\nu)^{\left(\floor{\frac{t}{m}}-1\right)}}a_t(m, \nu)r_0 =c^{(1+\nu)^{\left(\floor{\frac{t}{m}}-1\right)}}r_t  \qquad \text{for all}~~t \geq m,
    \end{align*}
    where
    \begin{align*}
        c = 1- \frac{1}{m}\left(1-\left(\frac{1}{2(1+\nu)}\right)^{1+\nu}\right).
    \end{align*}
\end{proof}

Combining the results of Theorem \ref{thm:Group-1-1} and  \ref{thm:Group-1-2}, we finish the proof of Theorem \ref{thm:Group-1}.

\subsection{Proof of Corollary \ref{cor:ignk}}

\begin{proof}
Denote $m=\ceil{n/k}$, according to Theorem \ref{thm:Group-1}, we have
\begin{align*}
r_{t+1}\leq  c^{(1+\nu)^{\left(\floor{{t}/{m}}-1\right)}}r_t
\qquad\text{with}\qquad
c = 1- \frac{1}{m}\left(1-\left(\frac{1}{2(1+\nu)}\right)^{1+\nu}\right).
\end{align*}
for all $\nu\in(0,1]$.
Noticing that the value of $c$ is monotonically decreasing according to $\nu$, we have
\begin{align*}
    1 - \frac{1}{2m} > c \geq 1 - \frac{15}{16m},
\end{align*}
which implies
\begin{align*}
    r_{t+1} \leq \Big(1 - \frac{1}{2m}\Big)^{(1+\nu)^{(\floor{t/m}-1)}}r_t
\end{align*}
for all $t\geq m$.

According to the definition of $\{r_t\}_{t\geq 0}$ and Theorem \ref{thm:Group-1-2}, 
we have
\begin{align*}
    r_0 = \max\{\Norm{\vx^0 - \vx^*},1\} \geq 1.
\end{align*}
Combining with Lemma \ref{le:n-qua-con}, we have
\begin{align*}
r_t = & a_t(m, \nu) r_0 \\
\leq & \frac{1}{2(1+\nu)}(a_{t-m}(m, \nu))^{1+\nu}r_0 \\
=& \frac{1}{2(1+\nu)r_0^{\nu}}(a_{t-m}(m, \nu))^{1+\nu}r_0^{1+\nu} \\
=& \frac{1}{2(1+\nu)r_0^\nu}r_{t-m}^{1+\nu} \\
\leq & \frac{1}{2(1+\nu)}r_{t-m}^{1+\nu}
\end{align*}
for all $t \geq m$.
This leads to 
\begin{align*}
    r_t \leq \frac{1}{4} r_{t-m}^{2}
\end{align*}
in the case of $\nu=1$.
\end{proof}

\newpage

\end{document}